\newtheorem{theorem}{Theorem}[section]
\newtheorem{definition}[theorem]{Definition}
\newcommand{\R}{\mathbb{R}}
\newcommand{\N}{\mathcal{N}}
\newcommand{\T}{\mathcal{T}}
\newcommand{\LL}{\mathcal{L}^C}
\newcommand{\WD}{\mathcal{WD}}
\newcommand{\WS}{\mathcal{WS}}
\newcommand{\defeq}{\vcentcolon=}
\newcommand{\E}{\mathrm{E}}
\newcommand{\Var}{\mathrm{Var}}
\newcommand{\SNR}{\text{SNR}}
\def\CatchFBT@Fin@l#1[#2]{%
   \begingroup
      \makeatletter #2%
      \scantokens\expandafter{%
         \expandafter\CatchFBT@tok\expandafter{\the\CatchFBT@tok}}%
      \CatchFBT@IsAToken{#1}
         {\global#1\expandafter{\the\CatchFBT@tok}}
         {\xdef#1{\the\CatchFBT@tok}}%
      \ifx\CatchFBT@tok#1\else\global\CatchFBT@tok{}\fi
   \endgroup
}
\begin{document}

\title{Multiresolution Mode Decomposition\\ for Adaptive Time Series Analysis}

\author{Haizhao Yang\\
  \vspace{0.1in}\\
  Department of Mathematics, Purdue University, US\\
}

\date{August 2019}
\maketitle

\begin{abstract}
This paper proposes the \emph{multiresolution mode decomposition} (MMD) as a novel model for adaptive time series analysis. The main conceptual innovation is the introduction of the \emph{multiresolution intrinsic mode function} (MIMF) of the form 
\[
\sum_{n=-N/2}^{N/2-1} a_n\cos(2\pi n\phi(t))s_{cn}(2\pi N\phi(t))+\sum_{n=-N/2}^{N/2-1}b_n \sin(2\pi n\phi(t))s_{sn}(2\pi N\phi(t))\] to model nonlinear and non-stationary data with time-dependent amplitudes, frequencies, and waveforms. 
The multiresolution expansion coefficients $\{a_n\}$, $\{b_n\}$, and the shape function series $\{s_{cn}(t)\}$ and $\{s_{sn}(t)\}$ provide innovative features for adaptive time series analysis. For complex signals that are a superposition of several MIMFs with well-differentiated phase functions $\phi(t)$, a new recursive scheme based on Gauss-Seidel  iteration and diffeomorphisms is proposed to identify these MIMFs, their multiresolution expansion coefficients, and shape function series. Numerical examples from synthetic data and natural phenomena are given to demonstrate the power of this new method.

\end{abstract}

{\bf Keywords.} Multiresolution mode decomposition, multiresolution intrinsic mode function, recursive nonparametric regression, convergence.

{\bf AMS subject classifications: 42A99 and 65T99.}

\section{Introduction}
\label{sec:intro}

Extracting useful information from large amounts of oscillatory data is important for a considerate number of real world applications such as medical electrocardiography (ECG) reading \cite{HauBio2,Pinheiro2012175,7042968}, atomic crystal images in physics \cite{Crystal,LuWirthYang:2016}, mechanical engineering \cite{Eng2,ME}, art investigation \cite{Canvas,Canvas2}, geology \cite{GeoReview,SSCT,977903}, imaging \cite{4685952}, etc.  In order to extract certain features and analyze adaptive components of oscillatory data, it is typical to assume that the signal $f(t)$ consists of several oscillatory modes like
\begin{equation}
\label{P1}
f(t)=\sum_{k=1}^K \alpha_k(t) e^{2\pi i N_k \phi_k(t)}+r(t),
\end{equation}
for $t\in[0,1]$, where $\alpha_k(t)$ is the instantaneous amplitude, $N_k \phi_k(t)$ is the instantaneous phase, $N_k\phi_k'(t)$ is the instantaneous frequency, and $r(t)$ is the residual signal. Many methods have been developed to decompose the signal $f(t)$ into several modes $\alpha_k(t) e^{2\pi iN_k \phi_k(t)}$ and to estimate the instantaneous information (the amplitude and phase functions) including the empirical mode decomposition (EMD) approach \cite{Huang1998,doi:10.1142/S1793536909000047}, synchrosqueezed transforms \cite{Daubechies2011,behera}, time-frequency reassignment methods \cite{Auger1995,Chassande-Mottin2003}, adaptive optimization \cite{VMD,Hou2012}, iterative filters \cite{YangWang,Cicone2016384}, etc.

Although modeling oscillation in the form of model \eqref{P1} is effective in many applications, sinusoidal oscillatory patterns may loses some important physical information when the data contain more complicated features. {This observation was rasied among EMD approaches and named as the intra-wave phenomenon (see, e.g., \cite{PhysicalAnal}). This phenomenon was also modeled with ``wave-shape functions" in \cite{Hau-Tieng2013}. Let us use the second terminology with $\{s_k(t)\}_{1\leq k\leq K}$ as the shape functions to build a probably better mathematical model for oscillatory data analysis in a form of a superposition of generalized intrinsic mode functions (GIMFs): }
\begin{equation}
\label{P2}
f(t)= \sum_{k=1}^K \alpha_k(t) s_k(2 \pi N_k \phi_k(t))+r(t)=\sum_{k=1}^K \sum_{n=-\infty}^{\infty} \widehat{s_k}(n)\alpha_k(t) e^{2\pi i n N_k \phi_k(t)}+r(t),
\end{equation}
for $t\in[0,1]$, where $\{s_k(t)\}_{1\leq k\leq K}$ are $2\pi$-periodic shape functions with a unit norm in $L^2$ and $\widehat{s}(0)=0$. {The mode decomposition problem with a model in \eqref{P2}  may have different names in the literature; but it is called the generalized mode decomposition (GMD) in this paper.} The shape function can reflect complicated evolution patterns of the signal $f(t)$. The photoplethysmogram (PPG) signal (see Figure \ref{fig:ppg0}) in medical study is one of such complex examples. The PPG signal contains two essential evolution patterns corresponding to the cardiac and respiratory cycles. The shape of the PPG waveform differs from subject to subject and contains valuable information for monitoring the health condition of patients \cite{doi:10.1097/ALN.0b013e31816c89e1}. For more examples, the reader is referred to a detailed survey in \cite{ceptrum}. {The introduction of shape functions makes it more difficult to solve the decomposition problem and it has been an active research direction to seek its numerical solutions \cite{1DSSWPT,Hou2016,ChuiLinWu2016,HZYregression}.}

\begin{figure}
  \begin{center}
   \includegraphics[width=4in]{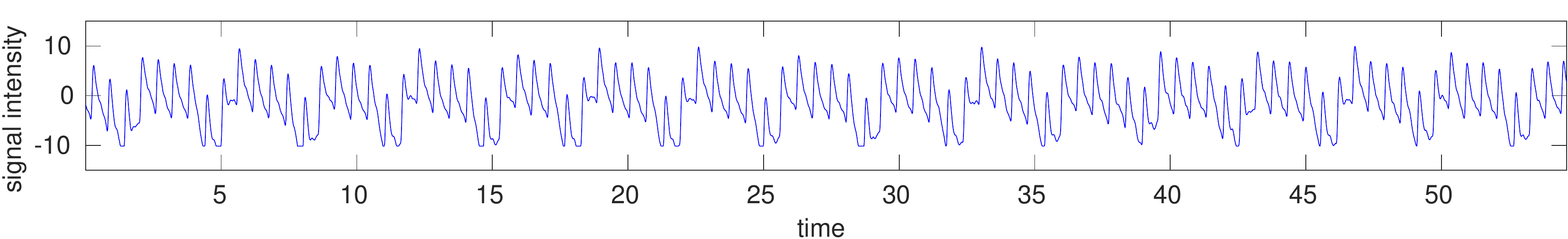} \\
   The raw PPG signal $f(t)$.\\
         \includegraphics[width=4in]{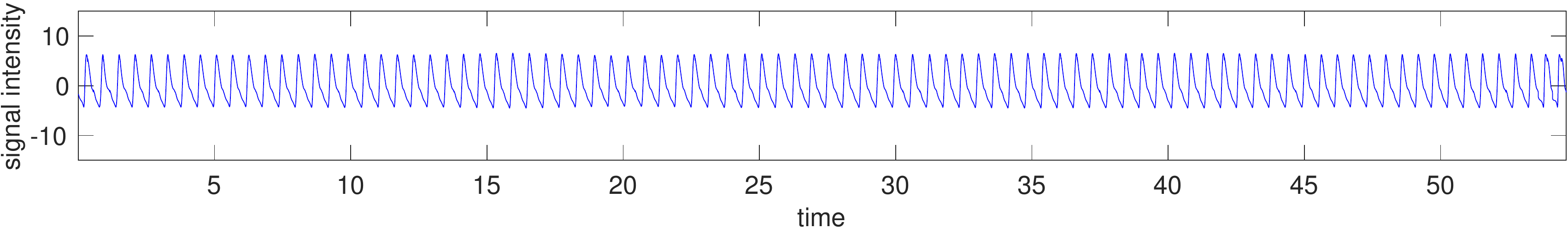}  \\
      The cardiac mode $f_{1}(t)$ by the generalized mode decomposition in \eqref{P2}.\\
      \includegraphics[width=4in]{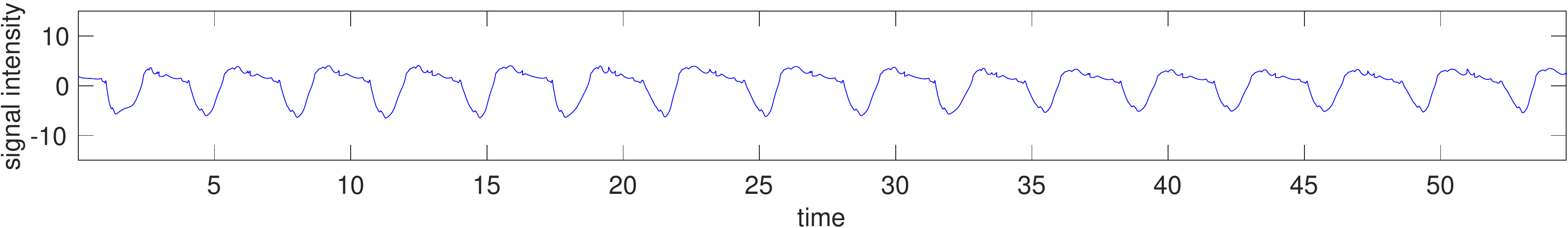}\\
      The respiratory mode $f_{2}(t)$ by the generalized mode decomposition in \eqref{P2}.\\      
      \includegraphics[width=4in]{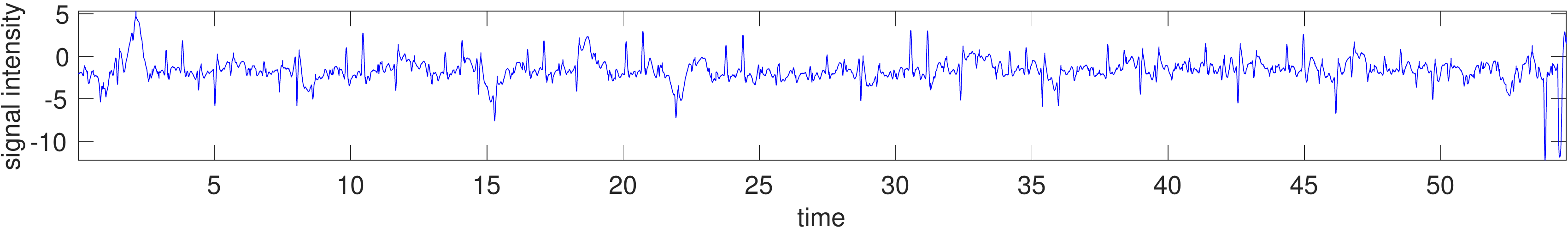}  \\
      The residual error $f(t)-f_{1}(t)-f_{2}(t)$  of the generalized mode decomposition in \eqref{P2}.\\
      \includegraphics[width=4in]{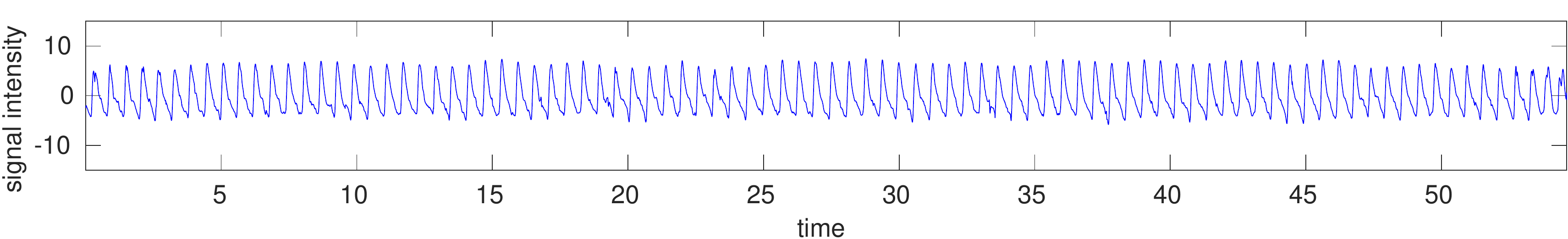}  \\
      The cardiac mode $f_1(t)$ by the multiresolution mode decomposition in \eqref{eqn:m_mmd}.\\
      \includegraphics[width=4in]{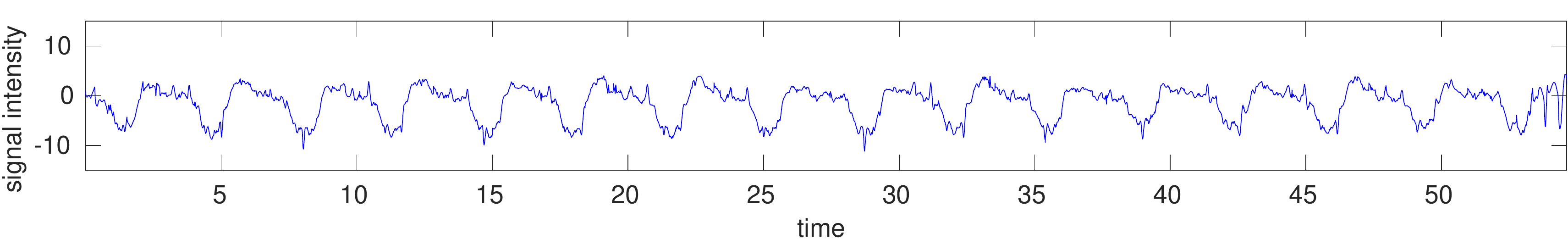}\\
      The respiratory mode $f_1(t)$ by the multiresolution mode decomposition in \eqref{eqn:m_mmd}.\\ 
      \includegraphics[width=4in]{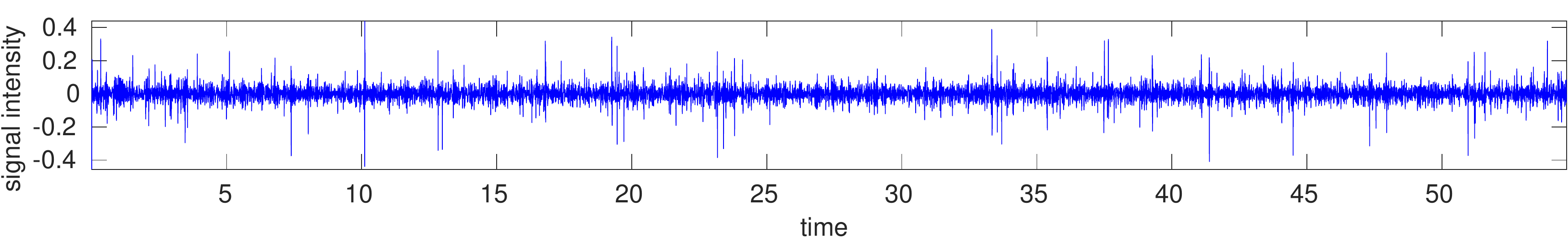}  \\
      The residual error $f(t)-f_{1}(t)-f_{2}(t)$ of the multiresolution mode decomposition in \eqref{eqn:m_mmd}.\\ 
  \end{center}
    \caption{Comparison of the GMD \eqref{P2} and MMD \eqref{eqn:m_mmd} for a photoplethysmogram (PPG) signal. The residual data  of the GMD model still contain obvious oscillatory patterns with significant signal intensity, while the residual data of the MMD model in \eqref{eqn:m_mmd} is much weaker and close to i.i.d random noise {(see Figure \ref{fig:ppg01} below for a quantitative analysis). }}
  \label{fig:ppg0}
\end{figure}

\begin{figure}
  \vspace{-0.5cm}
  \begin{center}
\begin{tabular}{ccc}
   \includegraphics[width=1in]{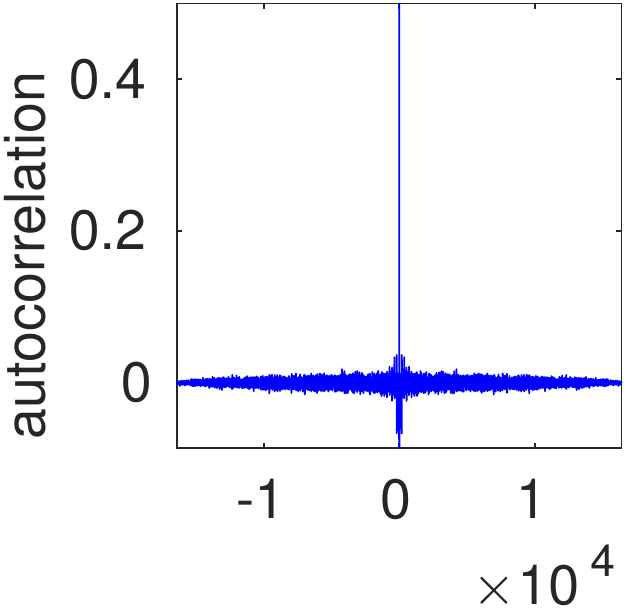} & \includegraphics[width=1in]{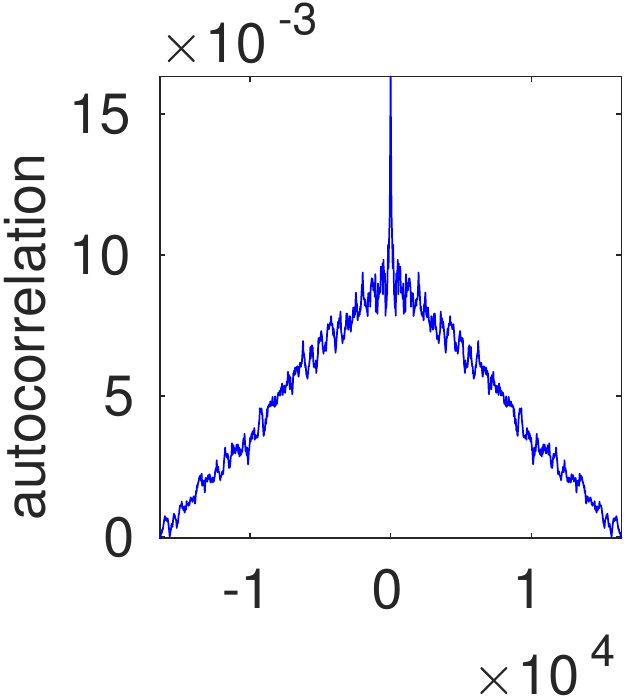}&  \includegraphics[width=1in]{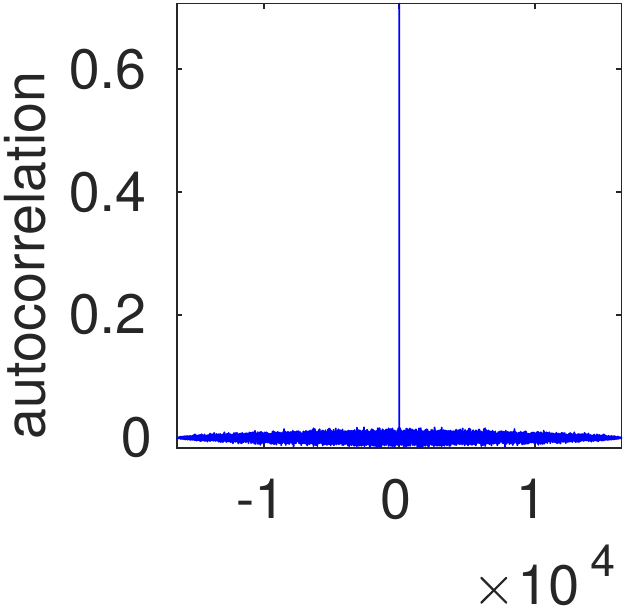} 
   \end{tabular}
  \end{center}
  \vspace{-0.5cm}
    \caption{{Comparison of the whiteness of the residual signal generated by GMD \eqref{P2} and MMD \eqref{eqn:m_mmd} for the PPG signal in Figure \ref{fig:ppg0}. The autocorrelation of the residual signal by GMD, the residual signal by MMD, and a vector of Gaussian random noise is plotted in the left, middle, and right figures, respectively. Theoretically, the autocorrelation of white noise is an impulse at lag 0. Hence, the results here show that the residual signal by MMD is close to white noise, while the one by GMD still contains correlated oscillation}.}
  \vspace{-0.5cm}
  \label{fig:ppg01}
\end{figure}

\begin{figure}
  \vspace{-0.5cm}
  \begin{center}
\begin{tabular}{cc}
   \includegraphics[width=2.2in]{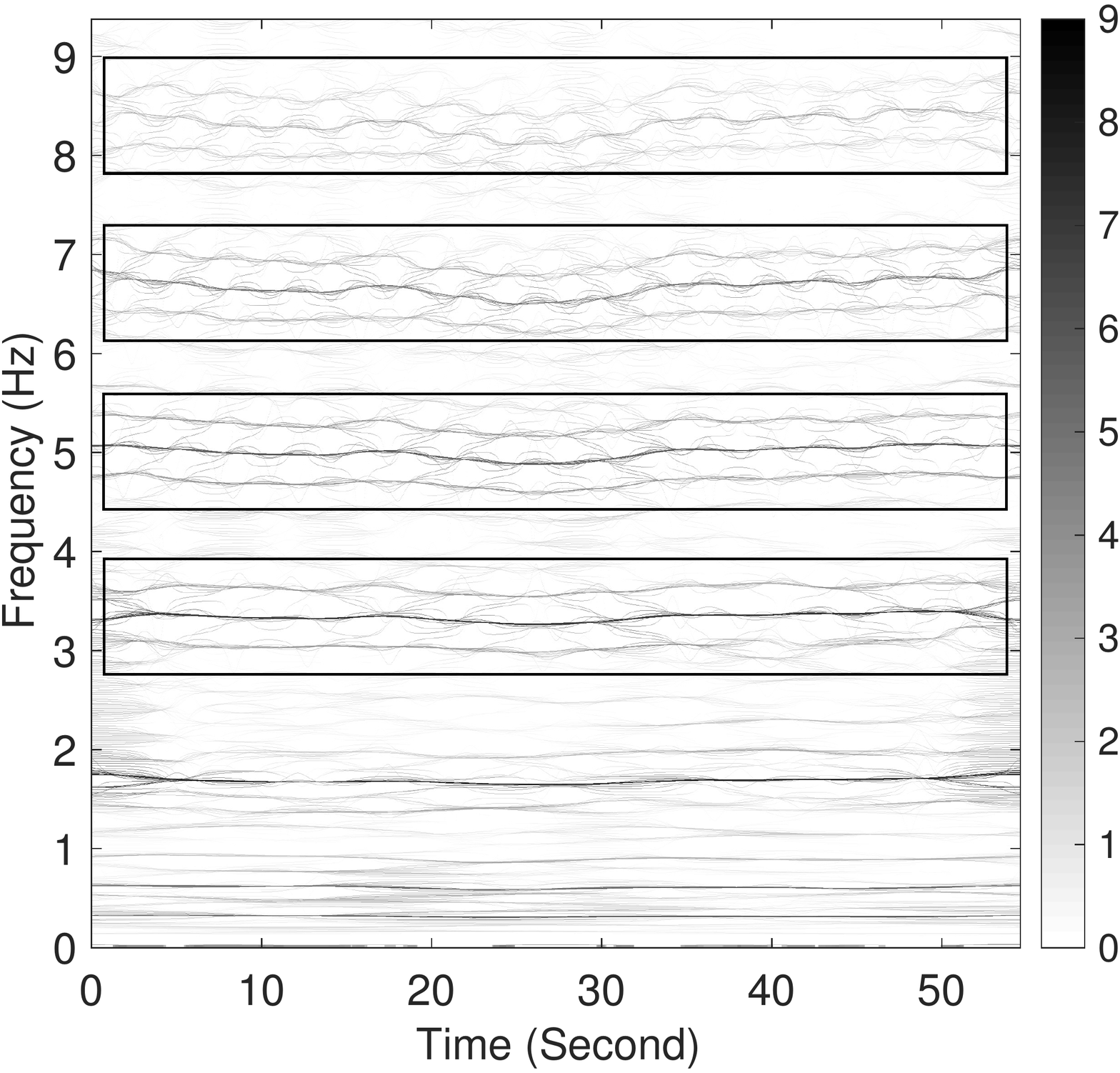} & \includegraphics[width=2.2in]{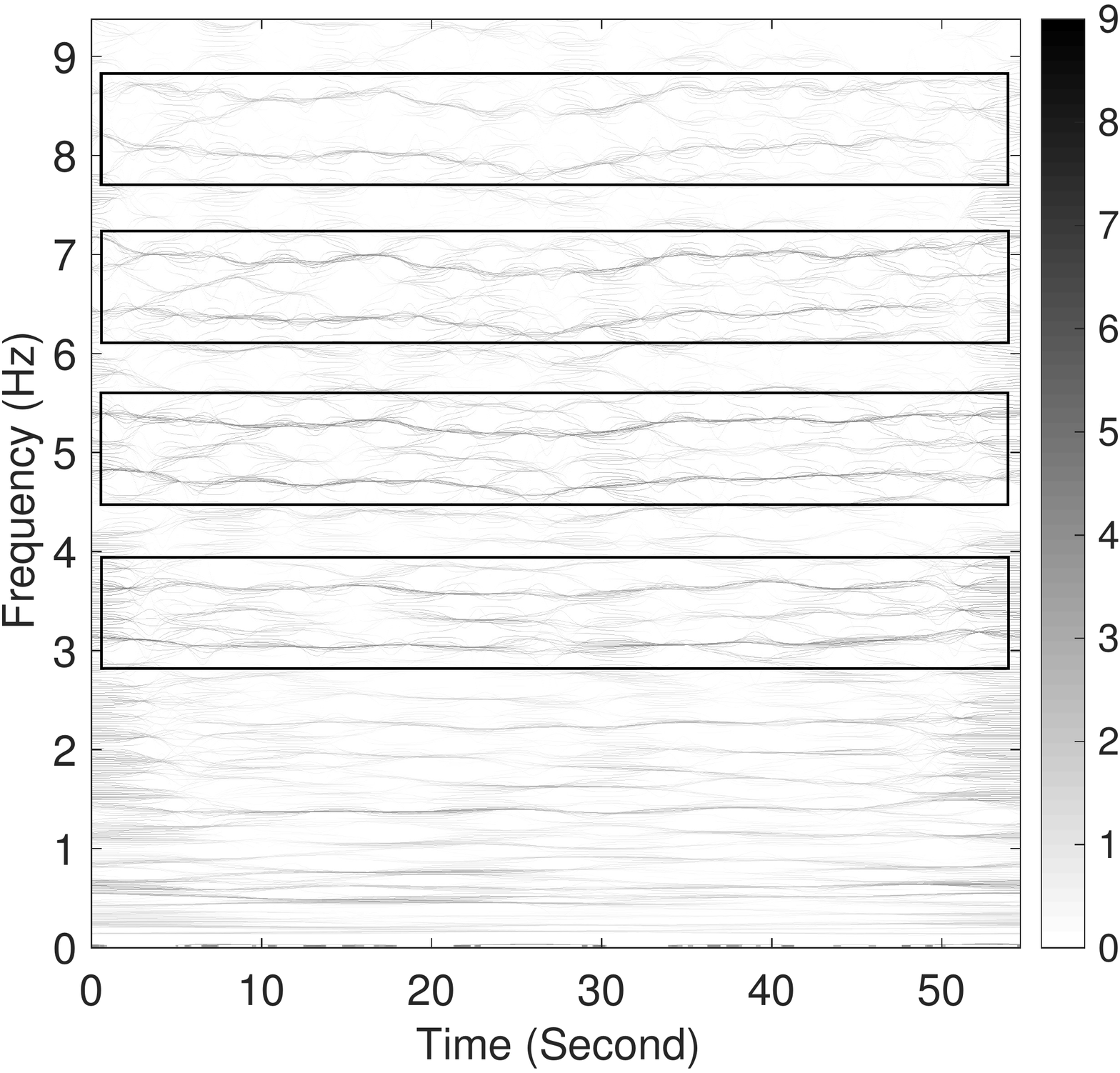}\\ \includegraphics[width=2.2in]{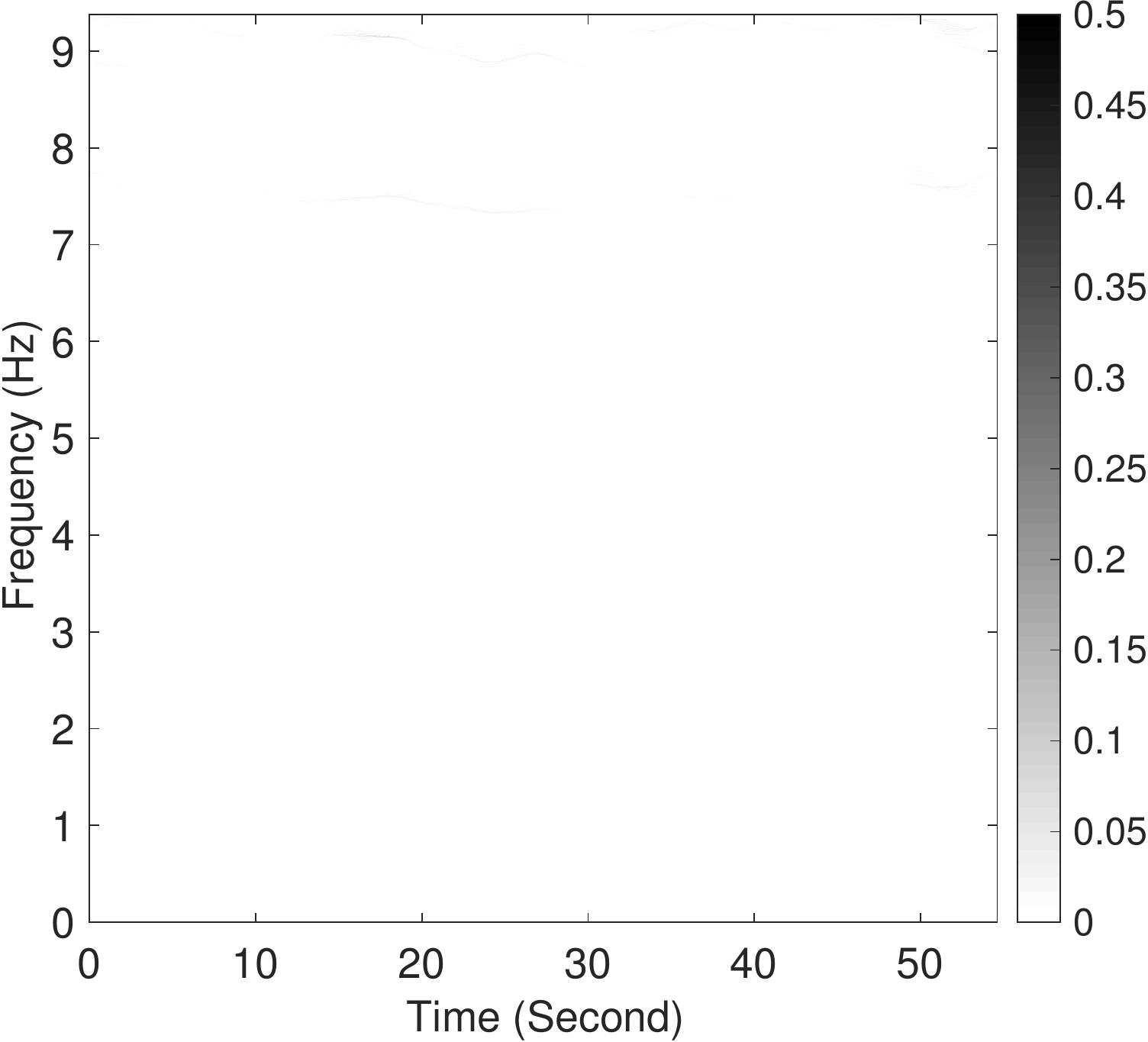} &  \includegraphics[width=2.2in]{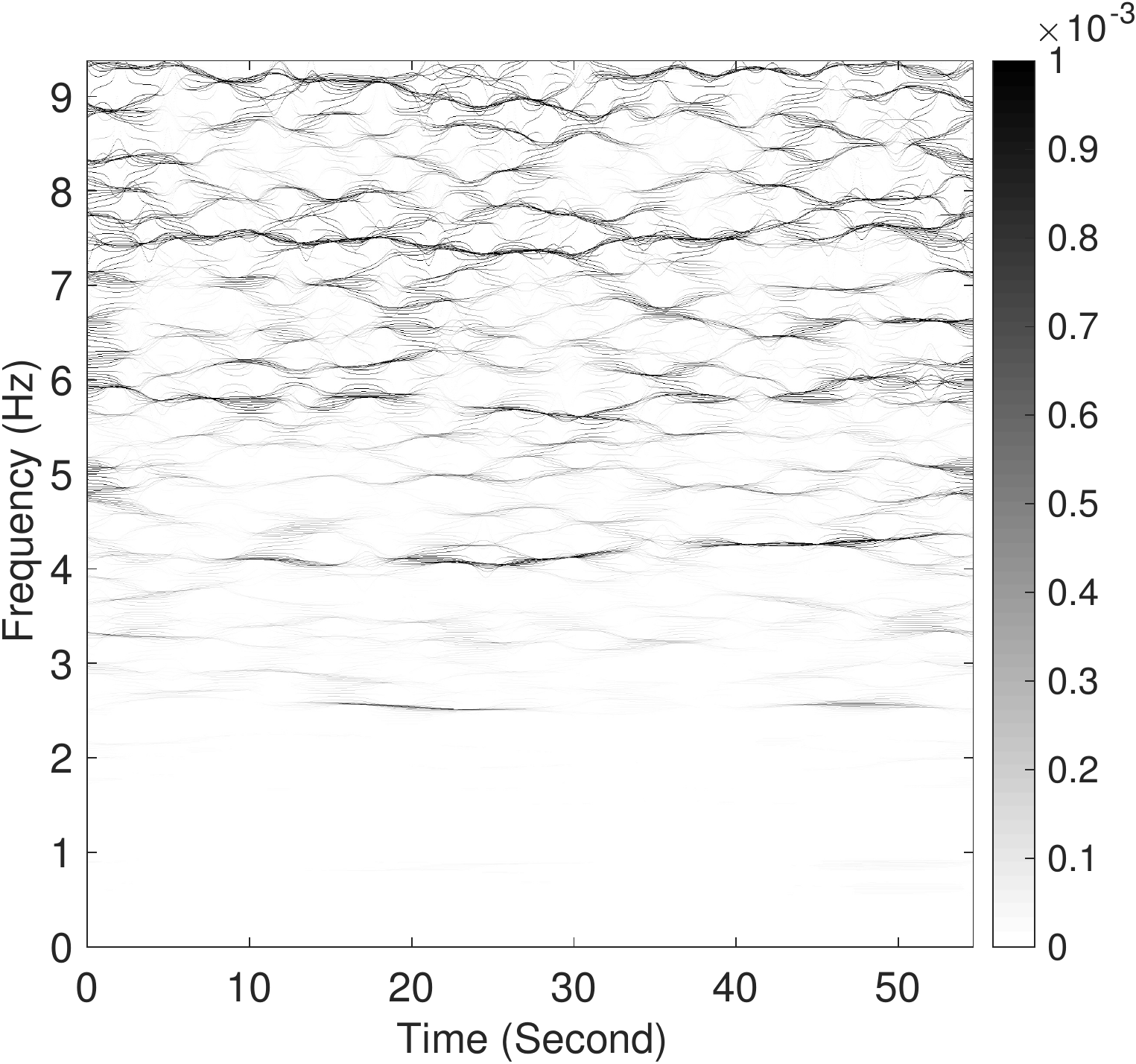} 
   \end{tabular}
  \end{center}
  \vspace{-0.5cm}
    \caption{{Top-left: the logarithm of the synchrosqueezed transform (SST) of the original PPG signal in Figure \ref{fig:ppg0}. Top-right: the logarithm of the SST of the residual signal by GMD \eqref{P2}, i.e., the $4$-th signal in Figure \ref{fig:ppg0}. There are three major instantaneous frequencies in each rectangle on the left, while there are two in the rectangles on the right. Bottom figures: the logarithm of the SST of the residual signal by MMD \eqref{eqn:m_mmd} with different visualization scales, i.e., the last signal in Figure \ref{fig:ppg0}. }}
  \vspace{-0.5cm}
  \label{fig:ppg02}
\end{figure}

In spite of considerable successes of analyzing oscillatory time series in the form of mode decomposition in \eqref{P1} or GMD in \eqref{P2}, these models conflict with the physical intuition that the oscillation pattern of the time series changes in time. For example, the cardiac and respiratory patterns in Figure \ref{fig:ppg0} vary in time. The GMD of the form \eqref{P2} can only extract average evolution patterns (i.e., time-independent shape functions) to describe the cardiac and respiratory time series, leaving the evolution variance of these patterns (i.e., the deviation from the average evolution pattern) in the residual signal $r(t)$ {(see Panel 4 of Figure \ref{fig:ppg0})}. However, the evolution variance is more important than the average evolution patterns for detecting diseases and measuring health risk. For example, the electrocardiogram (ECG) is an important tool to examine the functional status of the heart. The ECG waveform consists of three characteristic events (the P, QRS and T-wave as shown in Figure \ref{fig:ECG0}) associated with each beat. The detection of abnormal ECG waveforms is important to cardiac disease diagnosis \cite{lippincott2011ecg,hyper} and the abnormality is the deviation of an observed ECG waveform to a standard ECG waveform (e.g. tall R peaks caused by possible thickening of heart muscle wall,  wide QRS and wide S waves due to partial or complete right bundle branch block).

\begin{figure}
  \begin{center}
   \includegraphics[width=3in]{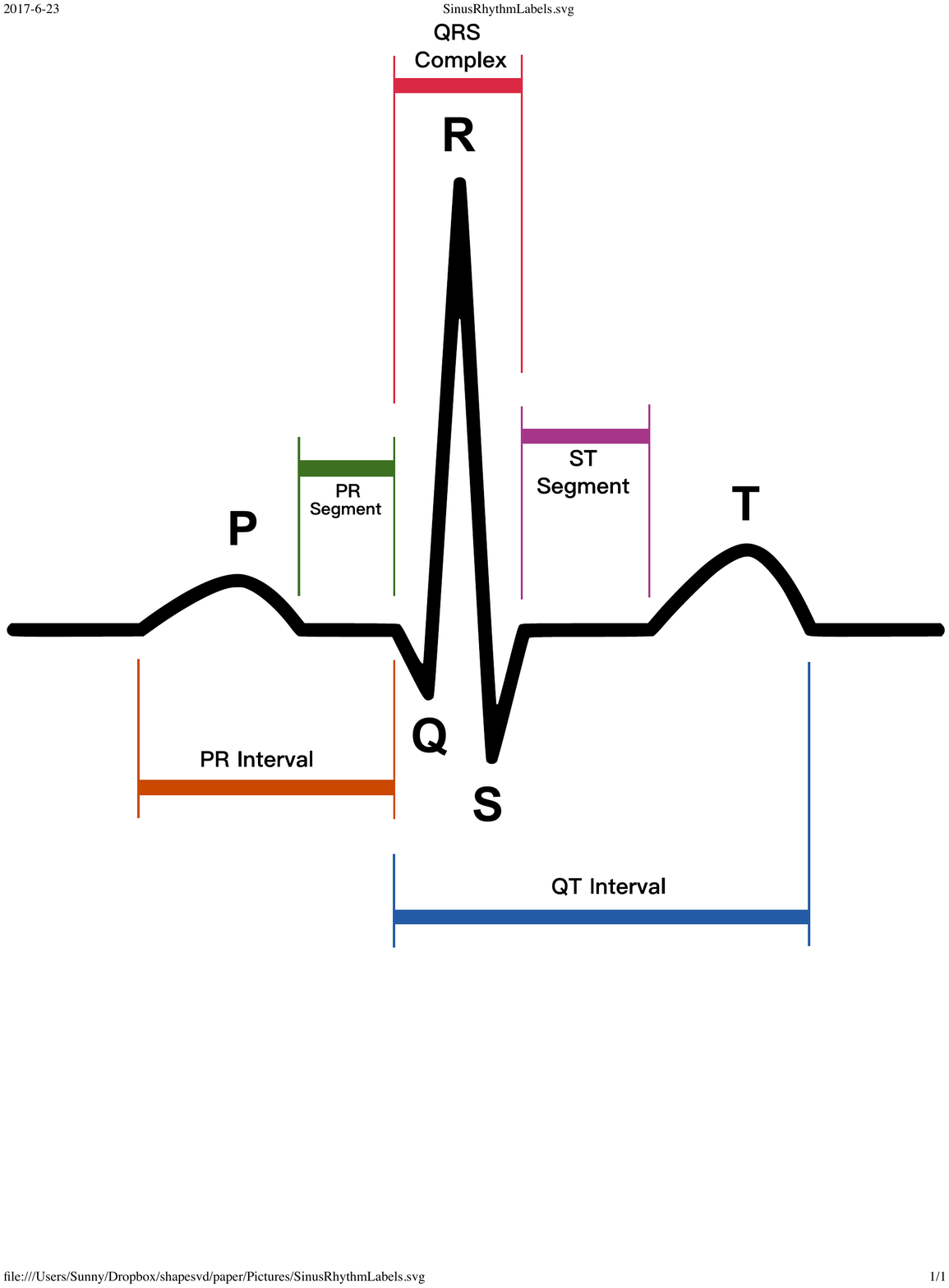}
  \end{center}
    \caption{A schematic diagram of normal sinus rhythm for a human heart as seen on ECG \cite{wikiECG}.}
  \label{fig:ECG0}
\end{figure}

\begin{figure}
  \begin{center}
   \includegraphics[width=6in]{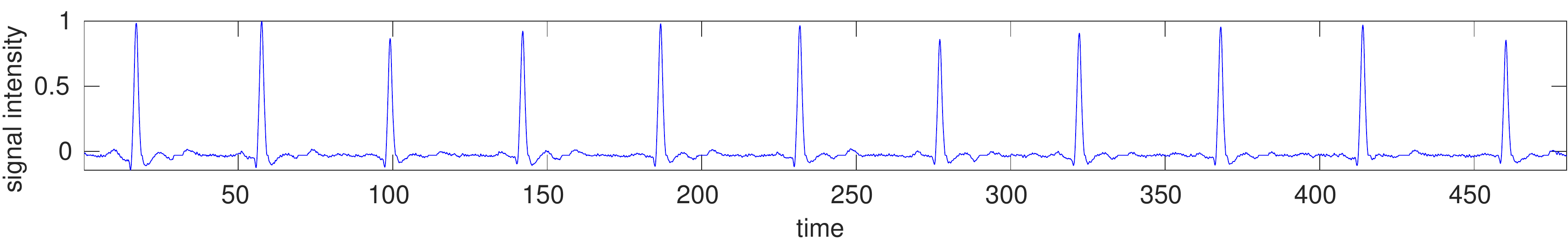} \\
      \includegraphics[width=6in]{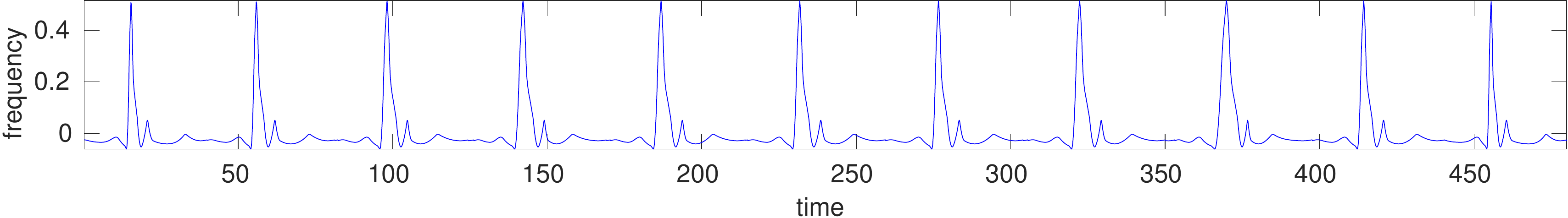}\\
      \includegraphics[width=6in]{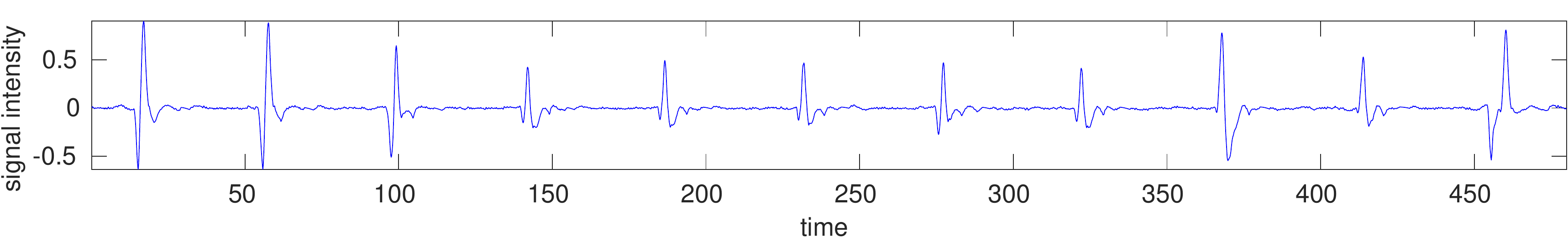}
  \end{center}
    \caption{Top: a motion artifact contaminated electrocardiogram (ECG) signal $f(t)$ modeled by Equation \eqref{eqn:m_IMF}. Middle: the $0$-band multiresolution approximation $\mathcal{M}_0(f)(t)=a_0 s_{c0}(2\pi N\phi(t))$ of $f(t)$. Bottom: $f(t) -\mathcal{M}_0(f)(t)$, the variance of the evolution pattern of $f(t)$.}
  \label{fig:ECG1}
\end{figure}

To analyze the fine features of evolution patterns discussed above, this paper proposes the \emph{multiresolution mode decomposition} as a novel model for adaptive time series analysis. The main conceptual innovation is the introduction of the \emph{multiresolution intrinsic mode function} (MIMF) of the form\footnote{The analytic analog is $f(t)=\sum_{n=-N/2}^{N/2-1}a_n e^{2\pi i n\phi(t)}s_n(2\pi N\phi(t))$, where $\{a_n\}$ are complex numbers and $\{s_n\}$ are analytic shape functions. Without loss of generality, $N$ is assumed to be even throughout this paper.}
\begin{eqnarray}
\label{eqn:m_IMF}
f(t) = \sum_{n=-N/2}^{N/2-1} a_n\cos(2\pi n\phi(t))s_{cn}(2\pi N\phi(t))+\sum_{n=-N/2}^{N/2-1}b_n \sin(2\pi n\phi(t))s_{sn}(2\pi N\phi(t))
\end{eqnarray}
for $t\in[0,1]$, where $\{a_n\}$ and $\{b_n\}$ are real numbers, $\{s_{cn}\}$ and $\{s_{sn}\}$ are real value functions, to model nonlinear and non-stationary data with time-dependent amplitudes, frequencies, and waveforms. The MIMF is a generalization of the model of GIMF,  $\alpha(t)s(2\pi N\phi(t))$, in Equation \eqref{P2} for more accurate data analysis. When $s_{cn}(t)$ and $s_{sn}(t)$ in Equation \eqref{eqn:m_IMF} are equal to the same shape function $s(t)$, the model in Equation \eqref{eqn:m_IMF} is reduced to $\alpha(t)s(2\pi N\phi(t))$ once the amplitude function $\alpha(t)$ is written in the form of its Fourier series expansion. When $s_{cn}(t)$ and $s_{sn}(t)$ are different shape functions, the two summations in Equation \eqref{eqn:m_IMF} lead to time-dependent shape functions to describe the nonlinear and non-stationary time series adaptively. {A recent paper \cite{ceptrum} also tried to address the limitation of GMD in \eqref{P2} by replacing $\widehat{s_k}(n)\alpha_k(t)$ with a time-varying function, denoted as $B_{k,n}(t)$, i.e., introducing more variance to amplitude functions. 
Our model in \eqref{eqn:m_IMF} emphasizes both the time variance of amplitude and shape functions by introducing multiresolution expansion coefficients and shape function series. As far as we understand, instead of estimating time-varying amplitude and shape functions, \cite{ceptrum} proposed an algorithm to eliminate the influence of amplitude and shape functions and estimate phase functions $N_k\phi_k(t)$. This algorithm could be a useful tool complimentary to the algorithm proposed in this paper, since we assume phase functions are known and estimate  time-varying amplitudes and shapes.}

Let $\mathcal{M}_\ell$ be the operator for computing the \emph{$\ell$-banded multiresolution approximation} to a MIMF $f(t)$ in Equation \eqref{eqn:m_IMF}, i.e.,
\begin{eqnarray}
\label{eqn:mm_IMF}
\mathcal{M}_\ell(f)(t) = \sum_{n=-\ell}^{\ell} a_n\cos(2\pi n\phi(t))s_{cn}(2\pi N\phi(t))+\sum_{n=-\ell}^{\ell}b_n \sin(2\pi n\phi(t))s_{sn}(2\pi N\phi(t)),
\end{eqnarray}  
and $\mathcal{R}_\ell$ be the operator for the computing the residual sum
\begin{equation}
\label{eqn:rr_IMF}
\mathcal{R}_\ell(f)(t) = f(t)-\mathcal{M}_\ell(f)(t).
\end{equation}
 Then the $0$-banded multiresolution approximation $\mathcal{M}_0(f)(t)=a_0 s_{c0}(2\pi N\phi(t))$ describes the average evolution pattern of the signal, while the rest describe the evolution variance. As shown in Figure \ref{fig:ECG1}, if $f(t)$ is an ECG signal\footnote{From the PhysiNet \url{https://physionet.org/}. }, then it is more obvious to observe the change of the evolution pattern from $\mathcal{R}_0(f)(t)$ than from $f(t)$, e.g., the change of the height of R peaks and the width of QRS and S waves.
 
In many applications \cite{HauBio2,Pinheiro2012175,7042968,Crystal,LuWirthYang:2016,Eng2,ME,Canvas,Canvas2,GeoReview,SSCT,977903,4685952}, a signal would be a superposition of several MIMFs, for example, a complex signal 
\begin{equation}\label{eqn:m_mmd}
f(t) = \sum_{k=1}^{K} f_k(t),
\end{equation}
where 
\begin{eqnarray*}
f_k(t)=\sum_{n=-N/2}^{N/2-1} a_{n,k}\cos(2\pi n\phi_k(t))s_{cn,k}(2\pi N_k\phi_k(t))+\sum_{n=-N/2}^{N/2-1}b_{n,k} \sin(2\pi n\phi_k(t))s_{sn,k}(2\pi N_k\phi_k(t)).
\end{eqnarray*}
The multiresolution mode decomposition (MMD) problem aims at extracting each MIMF $f_k(t)$, estimating its corresponding multiresolution expansion coefficients $\{a_{n,k}\}$, $\{b_{n,k}\}$, and the shape function series $\{s_{cn,k}(t)\}$ and $\{s_{sn,k}(t)\}$. As we can see in Figure \ref{fig:ppg0}, if MMD is applied to analyze the PPG signal, we can obtain the cardiac and respiratory patterns (Panel $5$ and $6$ in Figure \ref{fig:ppg0}, respectively) with more accurate evolution variance than the model of GMD in \eqref{P2} {in two aspects. In terms of statistical testing, the residual signal of MMD in Panel $7$ of Figure \ref{fig:ppg0} contains information close to i.i.d random noise while the residual by \eqref{P2} still contains correlated oscillation patterns as demonstrated by Figure \ref{fig:ppg01} quantitatively.  From the point of view of time-frequency analysis visualized in Figure \ref{fig:ppg02}, the synchrosqueezed transform (SST) of the residual by \eqref{P2} in the top-right panel shares almost the same spectrogram with the SST of the original PPG signal in the top-left panel. Note that the model in \eqref{P2} seems to capture only partial cardiac and respiratory patterns in the PPG signal and the residual signal still contains significant information resembling the cardiac and respiratory patterns. In fact, there are three major instantaneous frequencies with almost the same geometry in each rectangle on the top-left of Figure \ref{fig:ppg02}, while there are two in the rectangles on the top-right, indicating that the model in \eqref{P2} misses some instantaneous frequencies with similar oscillation patterns as those considered in \eqref{P2}. As a comparison, the SST of the residual by MMD has no obvious spectrogram even if in a much smaller visualization scale $(0,0.5)$ as shown in the bottom-left panel of Figure \ref{fig:ppg02}. In a very small visualization scale like $(0,0.001)$ in the bottom-right panel, we see that the SST of the residual by MMD indicates no meaningful oscillation pattern. The oscillation patterns missed by \eqref{P2} and visualized on the top-right panel of Figure \ref{fig:ppg02}  have been considered in the MMD model using the summation over different $n$'s in \eqref{eqn:m_IMF}. The geometry of the instantaneous frequencies of different terms with different $n$'s is similar to that of the term when $n=0$.}

 Although there have been a few algorithms for the GMD in \eqref{P2} \cite{1DSSWPT,ChuiLinWu2016,HZYregression}, these methods are incapable of either identifying the shape function series $\{s_{cn,k}(t)\}$ and $\{s_{sn,k}(t)\}$, or the multiresolution expansion coefficients $\{a_{n,k}\}$ and $\{b_{n,k}\}$ in the multiresolution mode decomposition. Motivated by the recursive diffeomorphism-based regression (RDBR) in \cite{HZYregression}, this paper proposes a Gauss-Seidel style recursive scheme to solve the multiresolution mode decomposition problem. As we shall see later, the novel recursive scheme has a faster convergence rate than the existing scheme in \cite{HZYregression}, and more importantly, it is robust to the estimated number of components (i.e., it still returns a meaningful mode decomposition even if the input number of modes is wrong).

To make the presentation of the Gauss-Seidel RDBR for the multiresolution mode decomposition more accessible, we will first introduce the Gauss-Seidel  RDBR for the GMD in Section \ref{sec:GMD}. The Gauss-Seidel RDBR for the multiresolution mode decomposition will be introduced in Section \ref{sec:MMD}. In Section \ref{sec:NumEx}, we present some numerical examples to demonstrate the efficiency of the proposed RDBR. Finally, we conclude this paper in Section \ref{sec:con}.
        
\section{Gauss-Seidel recursive scheme for the GMD}
\label{sec:GMD}

In what follows, we introduce the new Gauss-Seidel  recursive diffeomorphism-based regression (RDBR) for the GMD. Existing methods \cite{1DSSWPT,ChuiLinWu2016,HZYregression} for the GMD problem 
\begin{equation}
\label{P3}
f(t)= \sum_{k=1}^K \alpha_k(t) s_k(2 \pi N_k \phi_k(t))=\sum_{k=1}^K \sum_{n=-\infty}^{\infty} \widehat{s_k}(n)\alpha_k(t) e^{2\pi i n N_k \phi_k(t)}
\end{equation}
generally assume that the instantaneous properties (such as $\alpha_k(t)$ and $N_k \phi_k(t)$ in Equation \eqref{P3}) are available and focus on the estimation of shape functions $s_k(t)$. This assumption is based on the observation that: after filtering the signal with a low-pass filter, the GMD problem in Equation \eqref{P3} becomes the standard mode decomposition problem in \eqref{P1}; afterwards, instantaneous properties can be estimated by well-developed algorithms for the mode decomposition problem \cite{Huang1998,doi:10.1142/S1793536909000047,Daubechies2011,Auger1995,Chassande-Mottin2003,VMD,Hou2012,YangWang,Cicone2016384}, {especially the deShape SST \cite{ceptrum,Antonio}}. Hence, we assume that the instantaneous amplitudes and phases are known in this section. 

\subsection{Algorithm description}
\label{sub:aglo}
As in the Jacobi style RDBR algorithm \cite{HZYregression}, we assume $L$ points of measurement $\{f(t_\ell)\}_{\ell=1,\dots,L}$ with independent and identically distributed (i.i.d.) grid points $\{t_\ell\}_{\ell=1,\dots,L}$ from a uniform distribution in $[0,1]$. Usually, the grid is deterministic and uniform in $[0,1]$, but an i.i.d. grid enables a better estimation of the shape function with a smaller $L$, since it allows the access of the $2$-$\pi$ periodic shape function $s(t)$ for every $t\in[0,2\pi]$ with a certain probability.

\begin{figure}
  \begin{center}
    \begin{tabular}{cc}
      \includegraphics[height=0.5in]{./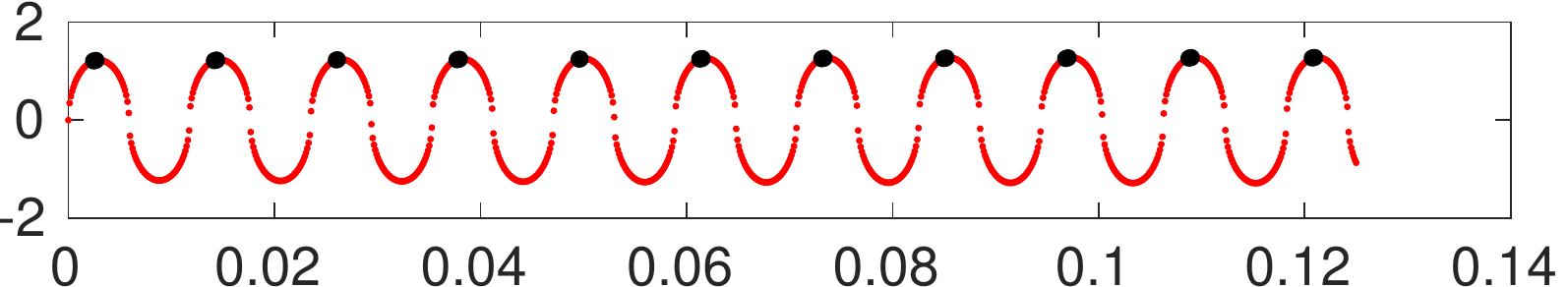} & 
      \includegraphics[height=0.5in]{./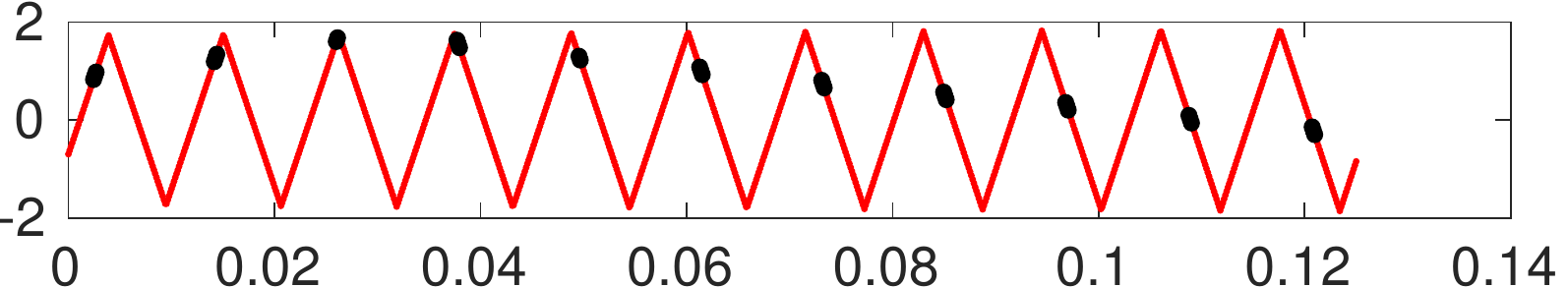}\\
      $s_1(2\pi v)$ &
    $\kappa_1(2\pi v)= s_2(2\pi   p_2\circ p_1^{-1}(v))$\\
    \end{tabular}
  \end{center}
  \caption{An illustration of the inverse-warping idea in Equation \eqref{eqn:ks} when $f(t)= s_1(2\pi N_1\phi_1(t)) + s_2(2\pi N_2\phi_2(t))$. Let $p_j=N_j\phi_j(t)$ for all $j$ and inverse-warp $f(t)$ with $v=p_1(t)$ into $h_1(v)$ as in Equation \eqref{eqn:ks}, then  $h_1(v)$ contains two main parts: a periodic function, $s_1(2\pi v)$ with samples $\{s_1(2\pi v_\ell)\}_{\ell=1,\dots,L}$ (see the left figure), and a non-periodic function, $\kappa_1(2\pi v)$ with samples $\{\kappa_1(2\pi v_\ell)\}_{\ell=1,\dots,L}$ (see the right figure).}
\label{fig:ks}
\end{figure}

 {In the RDBR, we define the inverse-warping data by
\begin{eqnarray}
h_k(v)& =& \frac{f\circ p_k^{-1}(v)}{\alpha_k\circ p_k^{-1}(v)}\nonumber\\
&=&s_k(2\pi v)+ \sum_{j\neq k} \frac{\alpha_j\circ p_k^{-1}(v)}{\alpha_k\circ p_k^{-1}(v)} s_j(2\pi   p_j\circ p_k^{-1}(v))\nonumber\\
&\defeq & s_k(2\pi v)+ \kappa_k(2\pi v),
\label{eqn:ks}
\end{eqnarray}
where $v \defeq p_k(t)$, $p_j(t)=N_j \phi_j(t)$ for all $j$, and 
\begin{eqnarray*}
\kappa_k(2\pi v) \defeq \sum_{j\neq k} \frac{\alpha_j\circ p_k^{-1}(v)}{\alpha_k\circ p_k^{-1}(v)} s_j(2\pi   p_j\circ p_k^{-1}(v)).
\end{eqnarray*}
As a consequence, we have a set of measurements of $h_k(v)$, $\{h_k(v_\ell)\}_{\ell=1,\dots,L}$, sampled in $v$ with $v_\ell=p_k(t_\ell)$ (see Figure \ref{fig:ks} for an illustration). }

If there was a single mode in $f(t)$ (e.g. $f(t)=\alpha_k(t)s_k(2\pi N_k \phi_k(t))$ with $k=1$), then $h_k(v)$ is equal to a periodic function $s_k(2\pi v)$ with period $1$. Hence, if we define a folding map $\tau$ that folds the two-dimensional point set $\{(v_\ell,h_k(v_\ell))\}_{\ell=1,\dots,L}$  together
\begin{eqnarray}\label{eqn:fold}
\tau: \ \      \left(v_\ell, h_k(v_\ell) \right)   \mapsto    \left(\text{mod}(v_\ell,1),  h_k(v_\ell) \right),
\end{eqnarray}
then the point set $\{\tau(v_\ell,s_k(2\pi v_\ell))\}_{\ell=1,\dots,L}\subset \mathbb{R}^2$ is a two-dimensional point set located at the curve $(v,s_k(2\pi v))\subset \mathbb{R}^2$ given by the shape function $s_k(2\pi v)$ with $v\in [0,1)$ (see Figure \ref{fig:s} (left) for an example). Using the notations in non-parametric regression, let $X_k$ be an independent random variable in $[0,1)$, $Y_k$ be the response random variable in $\mathbb{R}$, and consider $(x_\ell^{(k)},y_\ell^{(k)})=\tau(v_\ell,s_k(2\pi v_\ell))$ as $L$ i.i.d. samples of the random vector $(X_k,Y_k)$, then a simple regression results in the exact shape function as follows. Define
\begin{equation}
\label{eqn:re}
s^R_k\defeq \underset{s:\mathbb{R}\rightarrow \mathbb{R}}{\arg\min}\quad \E\{\left| s(2\pi X_k)-Y_k\right|^2\},
\end{equation}
where the superscript $^R$ means the ground truth regression function, then $s_k=s^R_k$. If we denote the numerical solution of the above regression problem as $s^P_k$, then $s^P_k\approx s^R_k=s_k$ when $L$ is sufficiently large.

\begin{figure}
  \begin{center}
    \begin{tabular}{cc}
      \includegraphics[height=1.8in]{./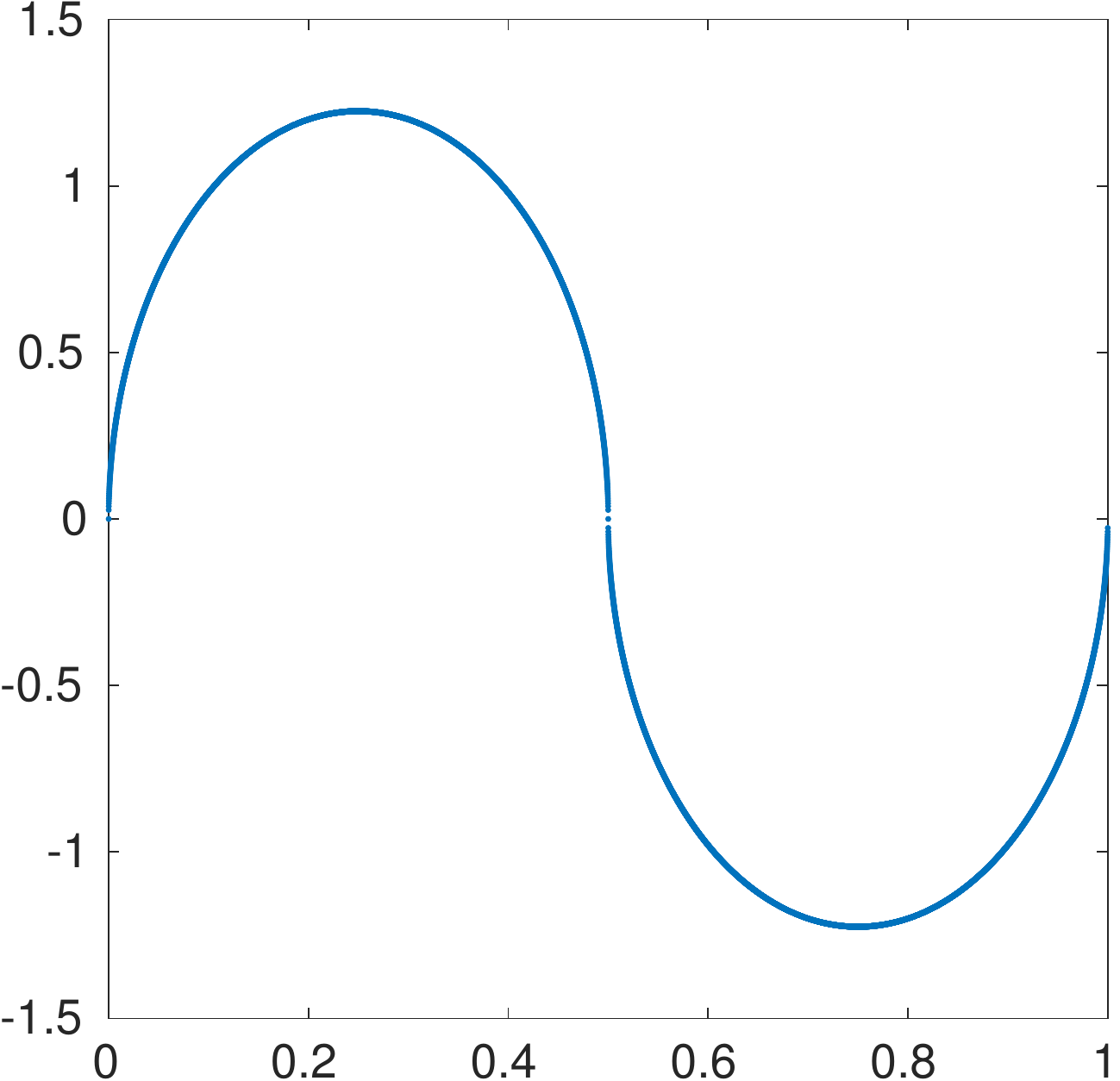} & 
      \includegraphics[height=1.8in]{./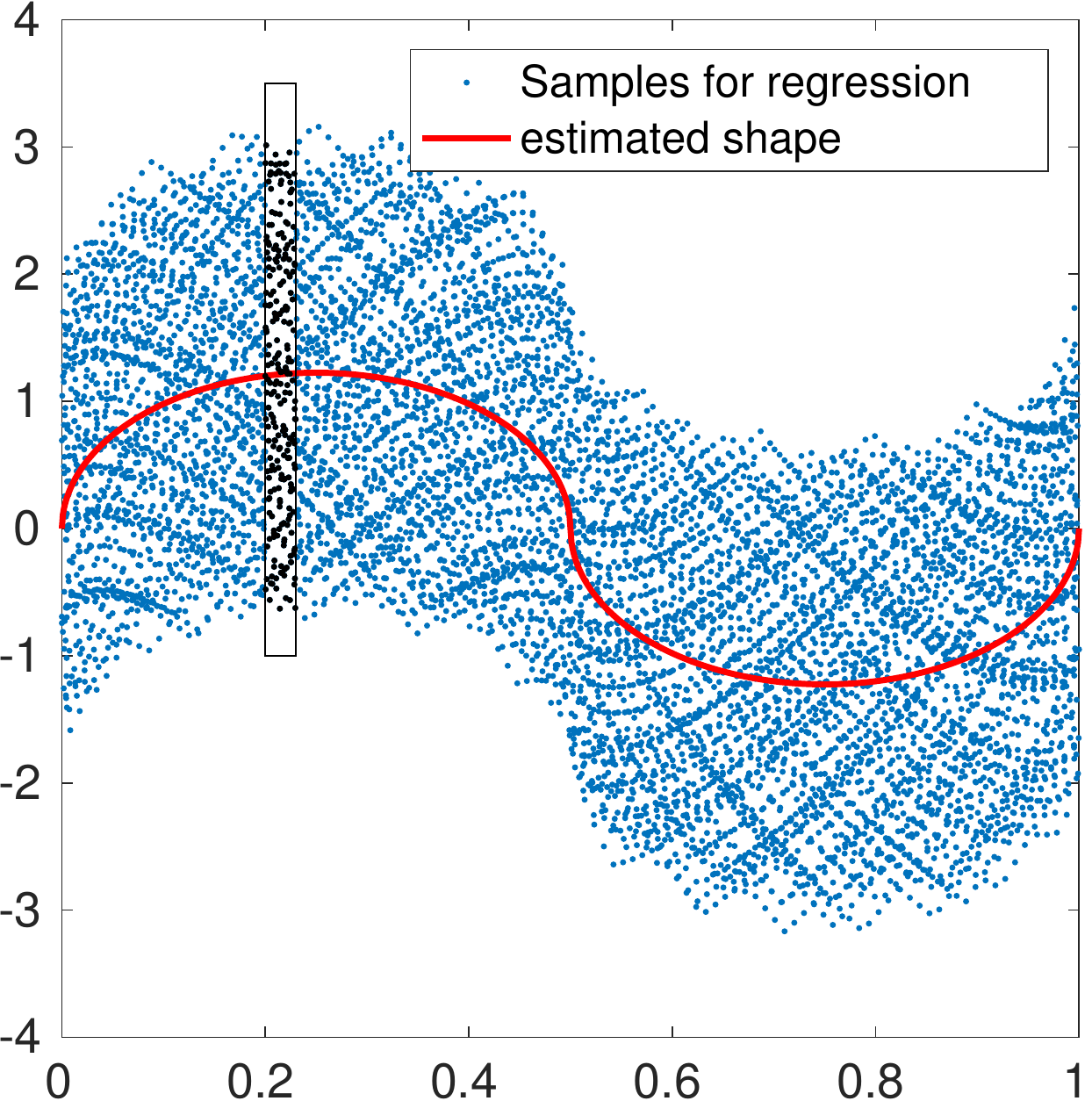}\\
    \end{tabular}
  \end{center}
  \caption{An illustration of the folding idea in Equation \eqref{eqn:fold}. Left: the point set $\{\tau(v_\ell,h_1(v_\ell))\}_{\ell=1,\dots,L}\subset \mathbb{R}^2$ in the case of one mode $f(t)=s_1(2\pi N_1\phi_1(t))$, and the point distribution characterizes the shape function $s_1(2\pi v)$ exactly. Right:  the point set $\{\tau(v_\ell,h_1(v_\ell))\}_{\ell=1,\dots,L}\subset \mathbb{R}^2$ in the case of two modes $f(t)=\sum_{k=1}^2s_k(2\pi N_k\phi_k(t))$, and the point distribution behaves like noisy observation of $s_1(2\pi v)$ with an additive noise $\kappa_1(2\pi v)$. The partition-based regression computes the regression function (in red) at a point $v^{bk}$ as the average height of all observations (black points) with sampling locations in a small neighborhood of $v^{bk}$.}
  \label{fig:s}
\end{figure}

\begin{figure}
  \begin{center}
    \begin{tabular}{cc}
      \includegraphics[height=1.8in]{./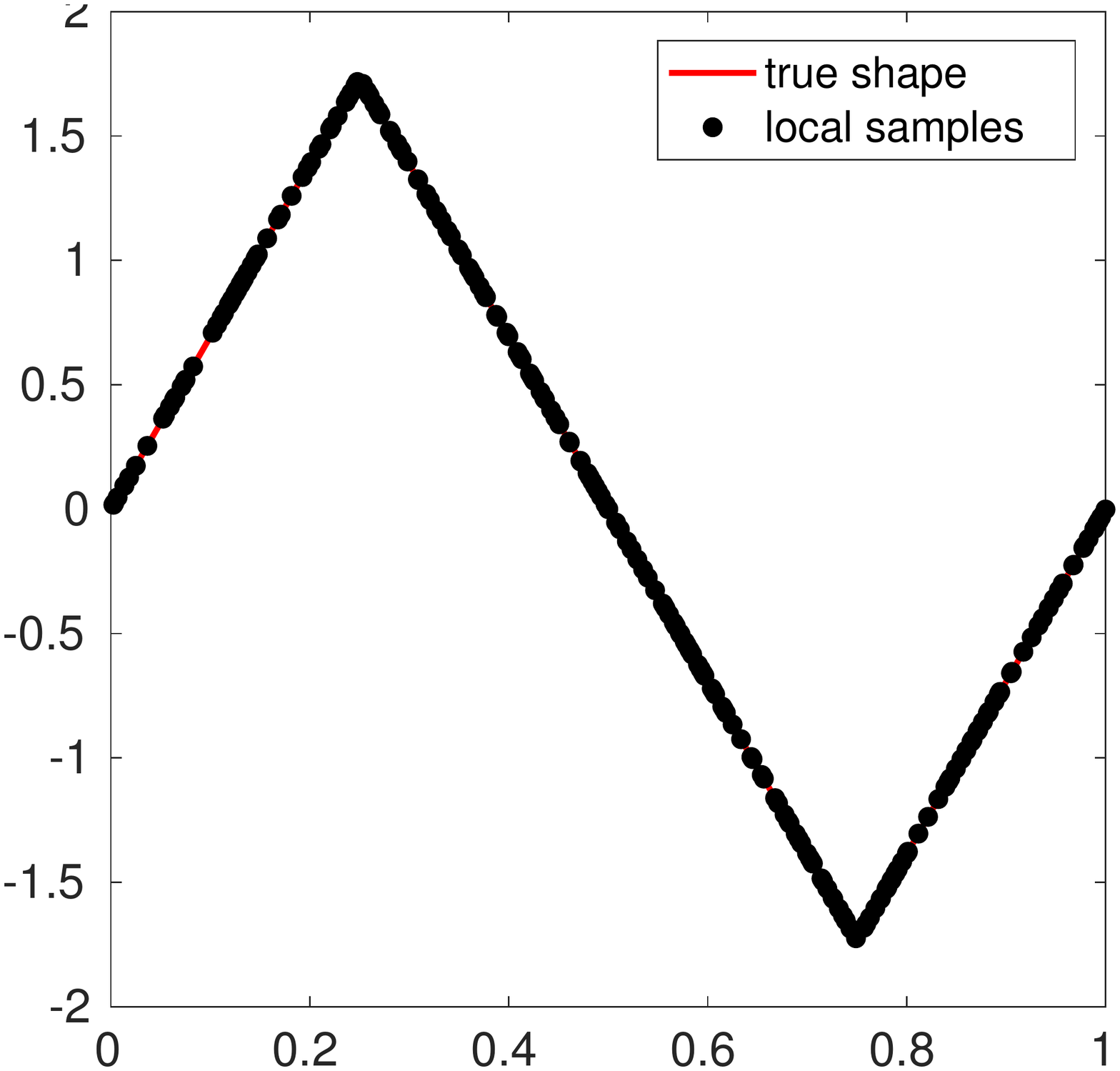} & 
      \includegraphics[height=1.8in]{./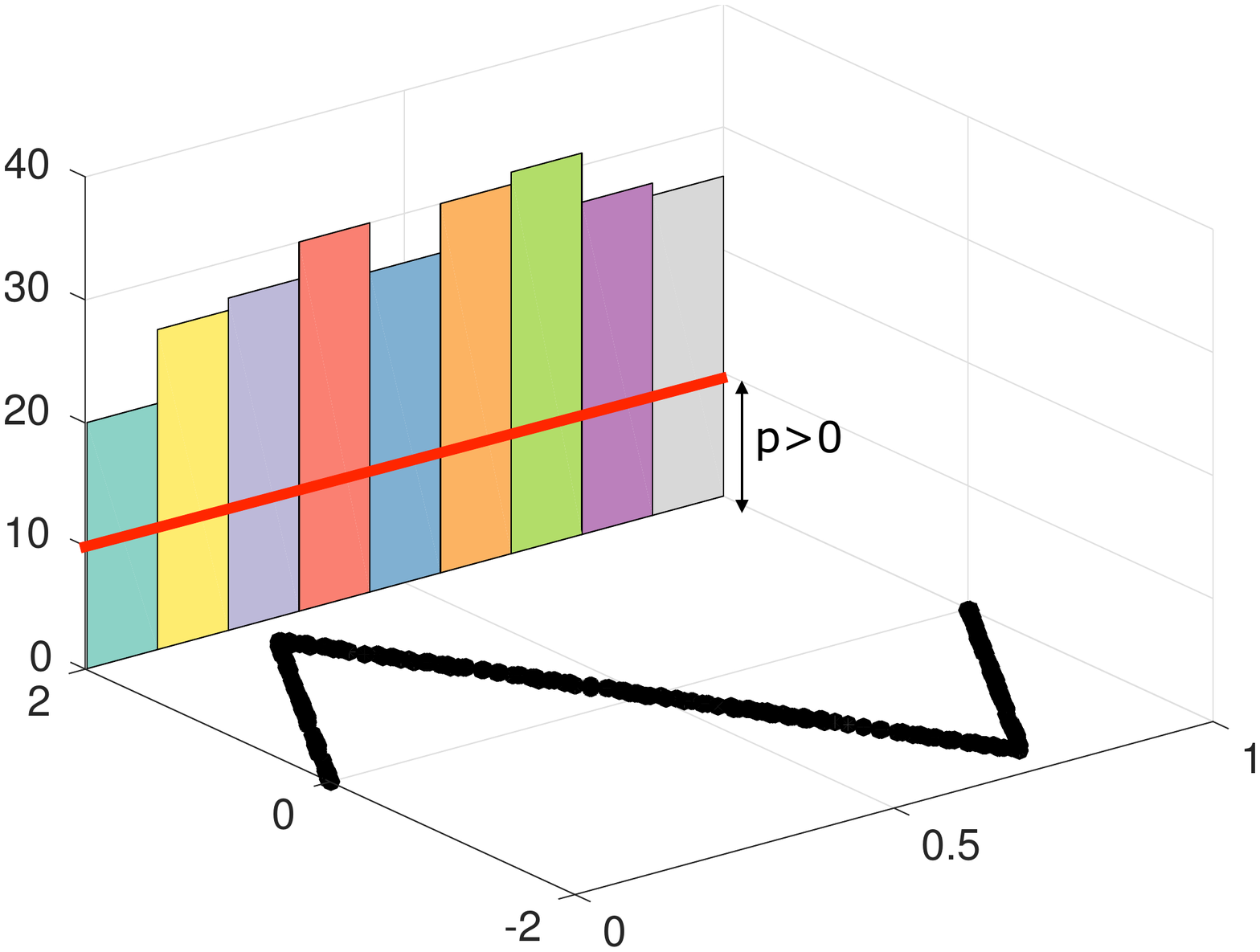}\\
    \end{tabular}
  \end{center}
  \caption{Left: the point set $\{\mod(z_\ell^{bk},1), \kappa_1(2\pi p_1\circ p_2^{-1}(\mod(z^{bk}_\ell,1)))\}_\ell = \{\mod(z_\ell^{bk},1), s_2(2\pi \mod(z^{bk}_\ell,1))\}_\ell \subset \mathbb{R}^2$.  Right: a histogram of the points $\{\mod(z_\ell^{bk},1)\}$ in the left figure.}
  \label{fig:s2}
\end{figure}

  {However, in the case of multiple modes, $\kappa_k(2\pi v)\neq 0$ and $(x_\ell^{(k)},y_\ell^{(k)})=\tau(v_\ell,s_k(2\pi v_\ell)+\kappa_k(2\pi v_\ell))$ for $\ell =1,\dots,L$ can be considered as noise-contaminated i.i.d. samples of a random vector $(X_k,Y_k)$ (see Figure \ref{fig:s} (right) for an illustration). Hence, the regression in \eqref{eqn:re} would not estimate the ground true shape function precisely. Let us use the partition-based regression method (or partitioning estimate) in Chapter 4 of \cite{regressionBook} as an example. Given a small step side $h\ll 1$, the time domain $[0,1]$ is uniformly partitioned into $N^h = \frac{1}{h}$ (assumed to be an integer) parts $\{[t^h_n,t^h_{n+1})\}_{n=0,\dots,N^h-1}$, where $t^h_n=nh$. Let $s_k^P$ denote the estimated regression function by the partition-based regression method with $L$ samples, $(x_\ell^{(k)},y_\ell^{(k)})_{\ell=1,\dots,L}$. Following Chapter 4 of \cite{regressionBook}, we define
\begin{equation}\label{eqn:pb}
 s_k^P(x) \defeq\frac{\sum_{\ell=1}^L\mathcal{X}_{[t^h_n,t^h_{n+1})} (x_\ell^{(k)}) y_\ell^{(k)} }{\sum_{\ell=1}^L\mathcal{X}_{[t^h_n,t^h_{n+1})}(x_\ell^{(k)}) }
\end{equation}
as the partition-based regression function, 
when $x\in [t^h_n,t^h_{n+1})$, where $\mathcal{X}_{[t^h_n,t^h_{n+1})}(x)$ is the indicator function supported on $[t^h_n,t^h_{n+1})$. As illustrated by black points in Figure \ref{fig:s} (right), the partition-based regression essentially computes $ s_k^P(x)$ as the average height of all observations $y_\ell^{(k)}$ with sampling locations $x_\ell^{(k)}$ in a small neighborhood of $x$. Let $s^E_k\defeq s^P_k-s_k$ be the residual shape function, then the residual error of the GMD after one step of regressions on all modes is
 \begin{equation}\label{eqn:rrr}
 r(t)=f(t)- \sum_{k=1}^K \alpha_k(t) s^P_k(2 \pi N_k \phi_k(t))\approx-\sum_{k=1}^K \alpha_k(t) s^E_k(2 \pi N_k \phi_k(t)),
 \end{equation}
  which might be large, since in general $s_k\approx s^P_k$ is not true.}
  
 {The deviation of $s^P_k$ from $s_k$ comes from the influence of $\kappa_k$ in \eqref{eqn:ks}. After the folding map in \eqref{eqn:fold}, we hope that $\{\kappa_k(2\pi \mod(v_\ell,1))\}_{1\leq \ell\leq L}$ behave like i.i.d. samples of a mean-zero random noise so that $s^P_k\approx s_k$. To understand the behavior of $\kappa_k(2\pi \mod(v,1))$, let us take the example of $\kappa_1(2\pi \mod(v,1))$ in Figure \ref{fig:s} (right). If we unfold the black samples $\{h_1(\mod(v^{bk}_\ell,1))=s_1(2\pi \mod(v^{bk}_\ell,1)) +\kappa_1(2\pi \mod(v^{bk}_\ell,1))\}$\footnote{Here $^{bk}$ means samples corresponding to black points in Figure \ref{fig:ks} to \ref{fig:s2}.} (samples with sampling locations $\mod(v^{bk}_\ell,1)$ near the point $v^{bk}$ in Figure \ref{fig:s} (right)) back to $h_1(v^{bk}_\ell)$, we see that the black samples $\kappa_1(2\pi \mod(v^{bk}_\ell,1))$ come from the black samples $\kappa_1(2\pi v^{bk}_\ell)$ in Figure \ref{fig:ks} (right). If we warp $\kappa_1(2\pi v^{bk}_\ell)$ back to $\kappa_1(2\pi p_1(t^{bk}_\ell))$, inverse-warp $\kappa_1(2\pi p_1(t^{bk}_\ell))$ to $\kappa_1(2\pi p_1\circ p_2^{-1}(z^{bk}_\ell))$, where $z:=p_2(t)$, and finally fold $\kappa_1(2\pi p_1\circ p_2^{-1}(z^{bk}_\ell))$ into one period to obtain 
 \begin{equation}\label{eqn:fold2}
 \kappa_1(2\pi p_1\circ p_2^{-1}(\mod(z^{bk}_\ell,1))),
 \end{equation}
 then from Figure \ref{fig:s2} (left) we see that the black samples $\kappa_1(2\pi p_1\circ p_2^{-1}(\mod(z^{bk}_\ell,1)))$ essentially cover the shape function $s_2$. In fact, by the definition of $\kappa_2$, we have
 \begin{equation*}
 \kappa_1(2\pi p_1\circ p_2^{-1}(\mod(z^{bk}_\ell,1)))=s_2(2\pi \mod(z^{bk}_\ell,1)).
 \end{equation*} 
Figure \ref{fig:s2} (right) shows a histogram of the sampling locations $\{\mod(z^{bk}_\ell,1)\}$. Note that $s_2$ has mean zero. Hence, if the point distribution $\{\mod(z^{bk}_\ell,1)\}$ is almost uniform in $[0,1]$, then by the partition-based regression formula, we see
 \begin{eqnarray}
s^P_1(2\pi v^{bk})&= &s_1(2\pi v^{bk}) + \frac{\sum_{\ell} \kappa_1(2\pi \mod(v^{bk}_\ell,1)) }{ \text{ the number of samples of }\{\mod(v^{bk}_\ell,1)\} }\nonumber\\
&= &s_1(2\pi v^{bk}) + \frac{\sum_{\ell} \kappa_1(2\pi v^{bk}_\ell) }{ \text{ the number of samples of }\{v^{bk}_\ell\} }\nonumber\\
&=& s_1(2\pi v^{bk}) + \frac{\sum_{\ell} \kappa_1(2\pi p_1(t^{bk}_\ell)) }{ \text{ the number of samples of }\{t^{bk}_\ell\} }\nonumber\\
&=& s_1(2\pi v^{bk}) + \frac{\sum_{\ell}  \kappa_1(2\pi p_1\circ p_2^{-1}(z^{bk}_\ell)) }{ \text{ the number of samples of }\{z^{bk}_\ell\} }\nonumber\\
&=& s_1(2\pi v^{bk}) + \frac{\sum_{\ell}  \kappa_1(2\pi p_1\circ p_2^{-1}(\mod(z^{bk}_\ell,1))) }{ \text{ the number of samples of }\{\mod(z^{bk}_\ell,1)\} }\nonumber\\
&=& s_1(2\pi v^{bk}) + \frac{\sum_{\ell}  s_2(2\pi \mod(z^{bk}_\ell,1)) }{ \text{ the number of samples of }\{\mod(z^{bk}_\ell,1)\} }\nonumber\\
&\approx & s_1(2\pi v^{bk}) +\int_0^1 s_2(2\pi z)dz\\
&=&  s_1(2\pi v^{bk}).\nonumber
\label{eqn:acc}
\end{eqnarray}
In the case when the point distribution $\{\mod(z^{bk}_\ell,1)\}$ is far from being uniform in $[0,1]$, we see that $s^P_1$ is not close to $s_1$.}
  
{Note that the residual $r(t)$ in \eqref{eqn:rrr} can be viewed as a new superposition of modes with new shape functions $\{-s^E_k\}$. This motivates the Jacobi recursive scheme in \cite{HZYregression} that repeats the same decomposition procedure to decompose the residual $r(t)$, and update the shape function estimation, until the residual is eliminated. It was proved in \cite{HZYregression} that if the empirical distribution of $\{\mod(z^{bk}_\ell,1)\}$ corresponding to $v^{bk}$ for any $v^{bk}$ is uniformly and strictly positive (e.g., larger than $p>0$ as in Figure \ref{fig:s2} (right)), then the recursive scheme is able to eliminate the residual $r(t)$.} 

To improve the convergence rate of the Jacobi style recursive scheme, this paper proposes a Gauss-Seidel recursive scheme in Algorithm \ref{alg:RDBR}. More importantly, the convergence of the Jacobi recursive scheme is sensitive to the prior information as inputs. For example, if the number of modes $K$ is not known exactly, the Jacobi recursive scheme may fail to converge, while the Gauss-Seidel recursive scheme would not. In practice, the input prior information is:
\begin{enumerate}
\item an estimated number of components $\bar{K}$;
\item a set of estimated phase functions
\begin{equation}
\label{eqn:prior1}
p_k(t)=n_k N_{\tau_k} \phi_{\tau_k}(t)-n_k N_{\tau_k} \phi_{\tau_k}(0),
\end{equation}
for $k=1$, $\dots$, $\bar{K}$, where $\tau_k\in\{1,\dots,K\}$ and $n_k\in \mathbb{Z}^+$ are unknown integers;
\item and the corresponding amplitude functions
\begin{equation}
\label{eqn:prior2}
q_k(t)=a_k \alpha_{\tau_k}(t),
\end{equation}
for $k=1$, $\dots$, $\bar{K}$, where $a_k>0$ is unknown.
\end{enumerate}
Note that the curves $\{p_k'(t)=n_k N_{\tau_k} \phi'_{\tau_k}(t)\}$ naturally belong to a few groups, each of which corresponds to the multiple of a fundamental instantaneous frequency $N_k\phi'_k(t)$ for some $k$. Following the curve classification idea in Algorithm 3.7 and Theorem 3.9 in \cite{1DSSWPT}, we are able to classify the curves $\{p_k'(t)=n_k N_{\tau_k} \phi'_{\tau_k}(t)\}$ and identify the corresponding fundamental instantaneous frequencies $\{N_{\tau_k}\phi'_{\tau_k}(t)\}$, which give the fundamental instantaneous phases up to an unknown initial phase $\{N_{\tau_k}\phi_{\tau_k}(t)-N_{\tau_k}\phi_{\tau_k}(0)\}$. Therefore, we can assume that $\{\tau_k\}_{1\leq k\leq\bar{K}}$ are distinct and $n_k=1$ for all $k$ in Equation \eqref{eqn:prior1} and \eqref{eqn:prior2} for clean data (i.e. the prior information only contains the fundamental instantaneous frequencies without their multiples, but the prior information may still miss some instantaneous frequencies). The reader is referred to \cite{1DSSWPT} for more detail. However, in the presence of noise, the instantaneous frequency estimations may have large errors leading the failure of the curve classification. In this case, the prior information may contain instantaneous frequencies that are multiples of the fundamental instantaneous frequencies (i.e., there may be some $n_k\neq 1$). Even if the prior information above misses some fundamental frequencies or contains the multiples of fundamental frequencies, the Gauss-Seidel recursive scheme below can still recover reasonably accurate modes from their superposition.

\begin{algorithm2e}[H]
\label{alg:RDBR}
\caption{Gauss-Seidel recursive diffeomorphism-based regression (RDBR).}
Input: $L$ points of i.i.d. measurement $\{f(t_\ell)\}_{\ell=1,\dots,L}$ with $t_\ell \in [0,1]$, estimated instantaneous phases $\{p_k\}_{k=1,\dots,\bar{K}}$, amplitudes $\{q_k\}_{k=1,\dots,\bar{K}}$, an accuracy parameter $\epsilon<1$, and the maximum iteration number $J$.

Output: the estimated shape functions  $\{\bar{s}_k\}_{k=1,\dots,\bar{K}}$ and the estimated modes $\{\bar{f}_k(t)\}_{k=1,\dots,\bar{K}}=\{{q}_k(t) \bar{s}_k(2\pi p_k(t))\}_{k=1,\dots,\bar{K}}$ at the sampling grid points $\{t_\ell\}_{1\leq \ell\leq L}$. 

Initialize: let $r_1^{(0)}=f$, $\epsilon_1=\epsilon_2=1$, $\epsilon_0=2$, the iteration number $j=0$, $\dot{s}^{(0)}_k=0$, and $\bar{s}_k^{(0)}=0$ for all $k=1,\dots,\bar{K}$.

Compute $N_k$ as the integer nearest to the average of $p'_k(t)$ for $k=1,\dots,\bar{K}$.

Sort $\{N_k\}_{1\leq k\leq \bar{K}}$ in an ascending order and reorder the amplitude and phase functions accordingly.

\While{$j<J$, $\epsilon_1> \epsilon$, $\epsilon_2>\epsilon$, and $|\epsilon_1-\epsilon_0|>\epsilon$}{ 

\For{$k=1,\dots,\bar{K}$}{

Define \[h^{(j)}_k= \frac{r_k^{(j)}\circ p_k^{-1}}{q_k\circ p_k^{-1}},\]  and we know it is sampled on grid points $v_\ell=p_k(t_\ell)$.

Observe that $\left\{\tau(v_\ell,h^{(j)}_k(v_\ell))\right\}_{\ell=1,\dots,L}$ behaves like a sequence of i.i.d. samples of a certain random vector $(X_k,Y^{(j)}_k)$ with $X_k\in [0,1)$.
  
Solve the distribution-free regression problem
\begin{equation}
\label{eqn:reg}
\dot{s}^{(j+1)}_k \approx s^{R,(j+1)}_k=\underset{s:\mathbb{R}\rightarrow \mathbb{R}}{\arg\min}\quad \E\{\left| s(2\pi X_k)-Y^{(j)}_k\right|^2\},
\end{equation}
where $\dot{s}^{(j+1)}_k$ denotes the numerical solution approximating the ground truth solution $s^{R,(j+1)}_k$.

Update $\dot{s}_k^{(j+1)}=\dot{s}_k^{(j+1)}-\frac{1}{2\pi}\int_0^{2\pi} \dot{s}_k^{(j+1)}(t)dt$ for all $k$.

Let $\bar{s}_k^{(j+1)}=\bar{s}_k^{(j)}+\dot{s}_k^{(j+1)}$ for all $k$.

If $k<\bar{K}$, then let $r_{k+1}^{(j)}=r_k^{(j)}-q_k(t) \dot{s}^{(j+1)}_k(2 \pi p_k(t))$; otherwise, let $r_{1}^{(j+1)}=r_k^{(j)}-q_k(t) \dot{s}^{(j+1)}_k(2 \pi p_k(t))$.

    }

Update $\epsilon_0=\epsilon_1$, $\epsilon_1=\|r_1^{(j+1)}\|_{L^2}$, $\epsilon_2=\max_{k}\{\|\dot{s}_k^{(j+1)}\|_{L^2}\}$.

Set $j = j + 1$. 
}
Let $\bar{s}_k=\bar{s}_k^{(j+1)}$ for all $k$.
\end{algorithm2e}

Remark: as pointed out in the Jacobi style RDBR \cite{HZYregression}, the unknown shifts $\{n_k N_{\tau_k}\phi_{\tau_k}(0)\}$ in Equation \eqref{eqn:prior1} and the unknown prefactors $\{a_k\}$ in Equation \eqref{eqn:prior2} have been absorbed in the estimation of shape functions and the reconstruction of modes. Hence, it is not necessary to know them a prior. In the convergence analysis, instead of Equation \eqref{eqn:prior1} and \eqref{eqn:prior2}, we assume that 
\begin{equation}
\label{eqn:prior3}
p_k(t)=n_k N_{\tau_k} \phi_{\tau_k}(t),
\end{equation}
and
\begin{equation}
\label{eqn:prior4}
q_k(t)=\alpha_{\tau_k}(t).
\end{equation}

\subsection{Convergence analysis}
\label{sec:thmRDBR}

In this section, an asymptotic analysis on the convergence of the Gauss-Seidel recursive diffeomorphism-based regression (RDBR) in Algorithm \ref{alg:RDBR} is provided for a class of time series that is a superposition of several generalized intrinsic mode functions as follows. To make the analysis self-contained, a few definitions in \cite{HZYregression} will be repeated.

\begin{definition} Generalized shape functions:
\label{def:GSF}
The generalized shape function class ${\cal S}_M$ consists of $2\pi$-periodic functions $s(t)$ in the Wiener Algebra with a unit $L^2([0,2\pi])$-norm and a $L^\infty$-norm bounded by $M$ satisfying the following spectral conditions:
\begin{enumerate}
\item The Fourier series of $s(t)$ is uniformly convergent;
\item $\sum_{n=-\infty}^{\infty}|\widehat{s}(n)|\leq M$ and $\widehat{s}(0)=0$;
\item Let $\Lambda$ be the set of integers $\{|n|: \widehat{s}(n)\neq 0\}$. The greatest common divisor $\gcd(s)$ of all the elements in $\Lambda$ is $1$.
\end{enumerate}
\end{definition}

\begin{definition}
  \label{def:GIMFF}
  A function $f(t)=\alpha(t)s(2\pi N \phi(t))$ for $t\in[0,1]$ is a generalized intrinsic mode function (GIMF) of type $(M,N)$, if $s(t)\in {\cal S}_M$ and $\alpha(t)$ and $\phi(t)$ satisfy the conditions\footnote{
  In the analysis of the synchrosqueezed transform in \cite{1DSSWPT}, a GIMF of type $(M,N)$ requires stronger conditions as follows:
  \begin{align*}
    \alpha(t)\in C^\infty, \quad |\alpha'|\leq M, \quad 1/M \leq \alpha\leq M \\
    \phi(t)\in C^\infty,  \quad  1/M \leq | \phi'|\leq M, \quad |\phi''|\leq M.
   \end{align*}
   However, the RDBR requires much weaker conditions.
} below:
  \begin{align*}
    \alpha(t)\in C^\infty, \quad 1/M \leq \alpha\leq M, \quad  \phi(t)\in C^\infty,  \quad  1/M \leq | \phi'|\leq M.
   \end{align*}
\end{definition}

In the analysis of the Gauss-Seidel RDBR, the partition-based regression method (or partitioning estimate) in Chapter 4 of \cite{regressionBook} will be adopted. Recall the definition of this regression introduced in \eqref{eqn:pb}:
\[
s^R(x)\approx s^P(x) \defeq\frac{\sum_{\ell=1}^L\mathcal{X}_{[t^h_n,t^h_{n+1})} (x_\ell) y_\ell }{\sum_{\ell=1}^L\mathcal{X}_{[t^h_n,t^h_{n+1})}(x_\ell) },
\]
when $x\in [t^h_n,t^h_{n+1})$. The following theorem given in Chapter 4 in \cite{regressionBook} estimates the $L_2$ risk of the approximation $s^P\approx s^R$ as follows.

\begin{theorem}
\label{thm:reg}
For the uniform partition with a step side $h$ in $[0,1)$ as defined just above, assume that 
\[
\Var(Y|X=x)\leq \sigma^2,\quad x\in\mathbb{R},
\]
\[
|s^R(x)-s^R(z)|\leq C|x-z|,\quad x,z\in\mathbb{R},
\]
$X$ has a compact support $[0,1)$, and there are $L$ i.i.d. samples of $(X,Y)$. Then the partition-based regression method provides an estimated regression function $s^P$ to approximate the ground truth regression function $s^R$, where
\begin{equation*}
s^R = \underset{s:\mathbb{R}\rightarrow \mathbb{R}}{\arg\min}\quad \E\{\left| s(2\pi X)-Y\right|^2\},
\end{equation*}
with an $L^2$ risk bounded by
\[
\E\|s^P-s^R\|^2\leq c_0\frac{\sigma^2+\|s^R\|^2_{L^\infty}}{Lh}+C^2h^2,
\]
where $c_0$ is a constant independent of the number of samples $L$, the regression function $s$,  the step side $h$, and the Lipschitz continuity constant $C$.
\end{theorem}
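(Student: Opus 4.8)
The plan is to prove the bound through the classical bias--variance decomposition for local averaging estimators, conditioning on the random design $\{X_\ell\}_{\ell=1}^L$. Write $\mu$ for the uniform law of $X$ on $[0,1)$, so that $\|g\|^2=\int_0^1 g(x)^2\,d\mu(x)$, and for a cell $A_n=[t^h_n,t^h_{n+1})$ let $B_n=\sum_{\ell=1}^L \mathcal{X}_{A_n}(X_\ell)$ denote the random number of design points falling in $A_n$; note $B_n$ is binomial with parameters $L$ and $p_n=\mu(A_n)=h$. First I would introduce the design-conditional mean of the estimator,
\[
\bar s(x)=\E\big[s^P(x)\,\big|\,X_1,\dots,X_L\big]=\frac{\sum_{\ell}\mathcal{X}_{A_n}(X_\ell)\,s^R(X_\ell)}{B_n}\quad\text{for } x\in A_n,\ B_n\geq 1,
\]
with $\bar s(x)=s^P(x)=0$ on empty cells ($B_n=0$) by the $0/0$ convention. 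Since $s^P(x)-\bar s(x)$ has conditional mean zero while $\bar s(x)-s^R(x)$ is measurable with respect to the design, the cross term vanishes after conditioning, giving the exact splitting $\E\|s^P-s^R\|^2=\E\|s^P-\bar s\|^2+\E\|\bar s-s^R\|^2$, which I treat as a variance term and a bias term.

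For the variance term I would condition on the design. On a non-empty cell the numerator noise $\sum_{\ell}\mathcal{X}_{A_n}(X_\ell)(Y_\ell-s^R(X_\ell))$ has conditionally independent, mean-zero summands with variance at most $\sigma^2$ by the hypothesis $\Var(Y|X=x)\leq\sigma^2$, so $\E[(s^P(x)-\bar s(x))^2\mid\text{design}]\leq \sigma^2/B_n$. Integrating over $x$ (each cell carries $\mu$-mass $h$) and summing over the $N^h=1/h$ cells yields
\[
\E\|s^P-\bar s\|^2\;\leq\;\sum_{n} h\,\sigma^2\,\E\!\Big[\tfrac{1}{B_n}\mathcal{X}_{\{B_n\geq 1\}}\Big].
\]
The key technical input is the binomial moment estimate $\E[\,B_n^{-1}\mathcal{X}_{\{B_n\geq1\}}\,]\leq 2/((L+1)p_n)=2/((L+1)h)$; substituting it collapses the sum to a bound of order $\sigma^2/(Lh)$.

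For the bias term I would invoke the Lipschitz hypothesis. On a non-empty cell every design point $X_\ell\in A_n$ and the evaluation point $x\in A_n$ satisfy $|X_\ell-x|\leq h$, so $|\bar s(x)-s^R(x)|\leq C\max_{\ell}|X_\ell-x|\leq Ch$, contributing at most $C^2h^2$ after integration. On an empty cell $\bar s(x)=0$ and the error is $|s^R(x)|\leq\|s^R\|_{L^\infty}$; the $\mu$-expected total mass of empty cells is $\sum_{n} h(1-h)^L=(1-h)^L\leq e^{-Lh}\leq 1/(Lh)$, so this part contributes at most $\|s^R\|_{L^\infty}^2/(Lh)$. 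Adding the two pieces gives $\E\|\bar s-s^R\|^2\leq C^2h^2+\|s^R\|_{L^\infty}^2/(Lh)$.

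Combining the variance and bias bounds produces $\E\|s^P-s^R\|^2\leq c_0(\sigma^2+\|s^R\|_{L^\infty}^2)/(Lh)+C^2h^2$ with an absolute constant $c_0$, as claimed. I expect the main obstacle to be the handling of the random denominator $B_n$: one must control $\E[B_n^{-1}\mathcal{X}_{\{B_n\geq1\}}]$ and the empty-cell probability uniformly across the $1/h$ cells rather than naively treating $B_n\approx Lh$, and it is precisely the empty-cell event that forces the $\|s^R\|_{L^\infty}^2$ contribution to appear alongside $\sigma^2$ in the numerator. The remaining estimates---the conditional-variance bound, the Lipschitz control of the bias, and the orthogonality of the decomposition---are routine once this binomial bookkeeping is in place.
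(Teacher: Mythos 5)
The paper itself contains no proof of this theorem---it is imported verbatim from Chapter 4 of \cite{regressionBook}---and your argument is essentially the standard proof of that cited result (Theorem 4.3 combined with the inverse-moment bound of Lemma 4.1 there): the design-conditional bias--variance decomposition, the bound $\E[B_n^{-1}\mathcal{X}_{\{B_n\geq 1\}}]\leq 2/((L+1)p_n)$, and the separate accounting of empty cells are exactly the ingredients of the textbook proof, and each of your steps (orthogonality of the cross term, conditional variance $\leq \sigma^2/B_n$, Lipschitz bias $\leq Ch$ within a cell, empty-cell mass $\leq 1/(Lh)$) is correct. The only deviation is that you assume $X$ is uniform on $[0,1)$, which the theorem does not; this is harmless, since replacing $h$ by $p_n=\mu(A_n)$ in your two key sums makes the same computation go through ($\sum_n p_n\,\E[B_n^{-1}\mathcal{X}_{\{B_n\geq 1\}}]\leq 2N^h/(L+1)$ and $\sum_n p_n(1-p_n)^L\leq N^h/(eL)$), yielding the identical bound for a general design distribution.
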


{As we shall see later in the assumption of Theorem \ref{thm:conv}, when the small step size $h$ in the partition-based regression is fixed, we require the number of samples $L$ to be sufficiently large such that the $L_2$ risk of the approximation $s^P\approx s^R$ is small enough.} To simplify notations, let $\LL$ be the class of functions that are Lipschitz continuous with a constant $C$. Denote the set of sampling grid points $\{t_\ell\}_{\ell=1,\dots,L}$ in Algorithm \ref{alg:RDBR} as $\T$. To estimate the regression function using the partition-based regression method, $\T$ is divided into several subsets as follows. For $i,j=1,\dots,K$, $i\neq j$, $m,n=0,\dots,N^h-1$, let 
\[
\T^{ij}_h(m,n)=\left\{ t\in \T: \mod(p_i(t),1)\in[t^h_m,t^h_m+h),\mod(p_j(t),1)\in[t^h_n,t^h_n+h)\right\},
\]
and
\[
\T^{i}_h(m)=\left\{ t\in \T: \mod(p_i(t),1)\in[t^h_m,t^h_m+h)\right\},
\]
then $\T=\cup_{m=0}^{N^h-1} \T^{i}_h(m)=\cup_{m=0}^{N^h-1}\cup_{n=0}^{N^h-1} \T^{ij}_h(m,n)$. 
Let 
\begin{equation}
\label{eqn:D}
D^{ij}_h(m,n)\quad \text{ and }\quad D^{i}_h(m)
\end{equation}
 denote the number of points in  $\T^{ij}_h(m,n)$ and $\T^{i}_h(m)$, respectively. 

\begin{definition}
  \label{def:wd}
  Suppose phase functions $p_k(t)= N_k \phi_k(t)$ for $t\in[0,1]$, and $k=1,\dots,K$, where $\phi_k(t)$ satisfies\footnote{
  In the analysis of the Jacobi style RDBR in \cite{HZYregression}, stronger conditions for the well-differentiated phase functions were required as follows:  \begin{align*}
    \phi_k(t)\in C^\infty,  \quad  1/M \leq | \phi_k'|\leq M, \quad |\phi_k''|\leq M.
   \end{align*}
  In fact, these conditions can be further weakened when the phase functions are assumed to be known and the synchrosqueezed transform is not involved in the analysis of RDBR.
  } 
  \begin{align*}
    \phi_k(t)\in C^\infty,  \quad  1/M\leq | \phi_k'|\leq M.
   \end{align*}
Then the collection of phase functions $\{p_k(t)\}_{1\leq k\leq K}$ is said to be $(M,N,K,h,\beta,\gamma)$-well-differentiated and denoted as $\{p_k(t)\}_{1\leq k\leq K}\subset\WD(M,N,K,h,\beta,\gamma)$, if the following conditions are satisfied:
\begin{enumerate}
\item $N_k\geq N$ for $k=1,\dots,K$;
\item $  \gamma\defeq  \underset{m,n,i\neq j }{\min}D^{ij}_h(m,n)$ satisfies $\gamma>0$ 
, where $D^{ij}_h(m,n)$ (and $D^{i}_h(m)$  below) is defined in \eqref{eqn:D};
\item Let \[\beta_{i,j} \defeq \left(\sum_{m=0}^{N^h-1} \frac{1}{D^{i}_h(m) }\left( \sum_{n=0}^{N^h-1} ( D^{ij}_h(m,n)-\gamma )^2 \right)\right)^{1/2}\] for all $i\neq j$, then $\beta\defeq \max\{\beta_{i,j}:i\neq j\}$ satisfies $M^2(K-1)\beta<1$.
\end{enumerate}
\end{definition}


In the above definition, $\gamma$ quantifies the dissimilarity between phase functions. The larger $\gamma$ is, the more dissimilarity phase functions have. If two phases are very similar, there might be some nearly empty sets $\T^{ij}_h(m,n)$ and hence $\gamma$ is small.  If $\gamma$ is larger, the numbers $\{D^{ij}_h(m,n)\}_{m,n}$ are closer and $\beta$ would be smaller. To guarantee a large $\gamma$, $N$ and $L$ should be sufficiently large. {To give more intuition of the well-differentiation, let us revisit the example in Figure \ref{fig:ks} to \ref{fig:s2}. Note that: 1) the second condition $\gamma\defeq  \underset{m,n,i\neq j }{\min}D^{ij}_h(m,n)>0$  requires that the empirical distribution of $\{\mod(z^{bk}_\ell,1)\}$ in \eqref{eqn:fold2} is uniformly and strictly positive for all cases in the partition-based regression; 2) the number $\beta$ in the third condition quantifies how uniform the distribution of $\{\mod(z^{bk}_\ell,1)\}$ is in the partition-based regression. The fact that these distributions are close to being uniform is equivalent to the fact that $\beta\approx 0$. As we have seen in the example in \eqref{eqn:acc}, if the distribution of $\{\mod(z^{bk}_\ell,1)\}$ is close to being uniform, which can be guaranteed if $\gamma$ is large and $\beta$ is close to zero, then one-step of diffeomorphism-based regression is already very accurate. Repeatedly applying the diffeomorphism-based regression can quickly eliminate the residual error.} 

With these notations defined, we are ready to present the main analysis of the Gauss-Seidel RDBR. Let's recall that in each iteration of Algorithm \ref{alg:RDBR}, if we denote the target shape function as $s_k^{(j)} $ then the given data is 
\begin{equation}
\label{eqn:r}
r_k^{(j)}(t)=\sum_{\ell=1}^{k-1} \alpha_\ell(t) s^{(j+1)}_\ell(2 \pi p_\ell(t)) + \sum_{\ell=k}^{K} \alpha_\ell(t) s^{(j)}_\ell(2 \pi p_\ell(t)).
\end{equation}
By reformulating the regression problem 
\begin{eqnarray}
s_k^{R,(j+1)} &=& \underset{s:\mathbb{R}\rightarrow \mathbb{R}}{\arg\min}\quad \E\{\left| s(2\pi X_k)-Y_k^{(j)} \right|^2\}\\
&=&  \underset{s:\mathbb{R}\rightarrow \mathbb{R}}{\arg\min}\quad \E\{\left| s(2\pi X_k)-(Y_k^{(j)} -s_k^{(j)}(2\pi X_k))\right|^2\}-s_k^{(j)},
\label{eqn:regthm}
\end{eqnarray}
we see that 
\[
s_k^{R,(j+1)} = s_k^{(j)} + s^{E,(j)}_k,
\]
where
\begin{equation}
\label{eqn:regthm1}
s^{E,(j)}_k(2\pi x)\defeq \E\{Y_k^{(j)} -s_k^{(j)}(2\pi X_k)|X_k=x\}\neq 0
\end{equation}
due to the perturbation caused by other modes. In the next iteration, the target shape function $s^{(j+1)}_k= - s^{E,(j)}_k$. Hence, the key convergence analysis is to show that $s^{E,(j)}_k$ decays as $j\rightarrow \infty$. 


In what follows, we assume that an accuracy parameter $\epsilon$ is fixed. Furthermore, suppose GIMF's $f_k(t)=\alpha_k(t)s_k(2\pi N_k \phi_k(t))$, $k=1,\dots,K$, have phases in $\WD(M,N,K,h,\beta,\gamma)$, all generalized shape functions and amplitude functions are in the space $\LL$. Under these conditions, all regression functions $s^{(j)}_k\in\LL$ and have bounded $L^\infty$ norm linearly depending only on $M$ and $K$. By Line $8$ in Algorithm \ref{alg:RDBR}, we have the nice and key condition that $\int_0^1 s^{(j)}_k(2\pi t)dt=0$ at each iteration for all $k$ and $j$. Note that $\Var(Y_k^{(j)}|X_k=x)$ is bounded by a constant linearly depending only on $M$ and $K$ as well. For the fixed $\epsilon$ and $C$, there exists $h_0(\epsilon,C)$ such that $C^2h^2<\epsilon^2$ if $0<h<h_0$. By the abuse of notation, $O(\epsilon)$ is used instead of $Ch$ later. By Theorem \ref{thm:reg}, for the fixed $\epsilon$, $M$, $K$, $C$, and $h$, there exists $L_0(\epsilon,M,K,C,h)$ such that the $L^2$ error of the partition-based regression is bounded by $\epsilon^2$. In what follows, $h$ is smaller than $h_0$, $L$ is larger than $L_0$, and hence all estimated regression functions approximate the ground truth regression function with an $L^2$ error of order $\epsilon$. Under these conditions and assumptions, as long as $N$ and $L$ are large enough, $s^{E,(j)}_k$ is shown to decay to $O(\epsilon)$ as $j\rightarrow \infty$, and the decay rate will be estimated.

\begin{theorem}
\label{thm:conv} (Convergence of the Gauss-Seidel RDBR when $\bar{K}=K$) Suppose $\bar{K}=K$, $\{1,\dots,K\}=\{\tau_1,\dots,\tau_K\}$, $n_k=1$ for all $k$ in Equation \eqref{eqn:prior3} and \eqref{eqn:prior4}. 
Under the conditions listed in the paragraph immediately preceding this theorem,  we have  
 \[
  \|s^{E,(j)}_k\|_{L^2}\leq O(c_0 \epsilon+( M^2(K-1)\beta)^{c(j,k)} )
 \]
and
\[
\|r_k^{(j+1)}\|_{L^2}\leq O(c_0\epsilon+\left(M^2(K-1)\beta\right)^{c(j,1)})
\]
for all $j\geq 0$ and $1\leq k\leq K$, where $c(j,k)=\left\lceil \frac{jK+k}{K-1}\right\rceil$, the smallest integer larger than or equal to $\frac{jK+k}{K-1}$, $c_0=\frac{1}{1-M^2(K-1)\beta}$ is a constant number, $s^{E,(j)}_k$  is defined in Equation \eqref{eqn:regthm1} and $r_k^{(j)}$ is defined in Equation \eqref{eqn:r}.
\end{theorem}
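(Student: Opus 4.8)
The plan is to reduce the theorem to two ingredients: a per-iteration bound on the regression bias $s^{E,(j)}_k$, and a recursion over a linear ordering of the updates that exploits the Gauss-Seidel structure.

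First I would establish the per-step estimate
\[
\|s^{E,(j)}_k\|_{L^2} \le M^2\beta \sum_{\ell\neq k}\|\tilde{s}_\ell\|_{L^2} + O(\epsilon),
\]
where, for $\ell\neq k$, $\tilde s_\ell$ denotes the target shape function of mode $\ell$ currently present in the residual $r^{(j)}_k$ (so $\tilde s_\ell=s^{(j+1)}_\ell$ for $\ell<k$ and $\tilde s_\ell=s^{(j)}_\ell$ for $\ell>k$). By \eqref{eqn:regthm1}, $s^{E,(j)}_k(2\pi x)$ is the conditional expectation, given $X_k=x$, of the folded perturbation $\kappa^{(j)}_k(2\pi v)=\sum_{\ell\neq k}\frac{\alpha_\ell}{\alpha_k}(p_k^{-1}(v))\,\tilde s_\ell(2\pi p_\ell\circ p_k^{-1}(v))$. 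Following the unfolding and re-warping computation in \eqref{eqn:acc}, the partition-based regression replaces this conditional expectation by a cell average of $\tilde s_\ell$ sampled at the points $\mod(p_\ell\circ p_k^{-1}(v),1)$; since each $\tilde s_\ell$ has mean zero (the mean-zero projection step in Algorithm \ref{alg:RDBR}), the average vanishes exactly when these points are equidistributed and is otherwise controlled by the deviation of the cell counts $D^{k\ell}_h(m,n)$ from the uniform value $\gamma$. A Cauchy-Schwarz argument over the partition cells converts this deviation into the factor $\beta_{k,\ell}$ of Definition \ref{def:wd}, while $\alpha_\ell/\alpha_k\le M^2$ supplies the $M^2$, and the leftover $O(\epsilon)$ is the partition-regression risk of Theorem \ref{thm:reg}. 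This step transcribes the Jacobi estimate of \cite{HZYregression} to the Gauss-Seidel residual and is the technical heart of the argument.

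The novelty lies in converting this bound into a recursion with the stated exponent. I would index updates linearly by $T=jK+k$ (mode $k$ in sweep $j$) and set $e_T:=\|s^{E,(j)}_k\|_{L^2}$. The crucial structural fact is that the modes perturbing $r^{(j)}_k$ are modes $1,\dots,k-1$ from the current sweep (updates $T-1,\dots,T-k+1$) together with modes $k+1,\dots,K$ from the previous sweep (updates $T-k,\dots,T-K+1$); together these are precisely the $K-1$ updates immediately preceding $T$. Using $s^{(j+1)}_\ell=-s^{E,(j)}_\ell$ and $a:=M^2\beta$, the per-step estimate becomes the sliding-window recursion $e_T\le a\sum_{s=T-K+1}^{T-1}e_s+O(\epsilon)$, with the convention $e_s:=1$ for virtual indices $s\le0$, accounting for the untouched unit-norm true shape functions in the first sweep.

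Finally I would solve this recursion by strong induction. Set $\rho:=a(K-1)=M^2(K-1)\beta<1$ and recall $c(j,k)=\lceil T/(K-1)\rceil$. The elementary key is that over any window of $K-1$ consecutive indices $\{T-K+1,\dots,T-1\}$ the function $\lceil s/(K-1)\rceil$ attains minimum $c(j,k)-1$, so the hypothesis $e_s\le\rho^{\lceil s/(K-1)\rceil}$ yields $\sum_{s=T-K+1}^{T-1}e_s\le(K-1)\rho^{c(j,k)-1}$ and hence $e_T\le a(K-1)\rho^{c(j,k)-1}=\rho^{c(j,k)}$, closing the homogeneous induction; the base case is automatic since $\rho^{\lceil s/(K-1)\rceil}\ge1$ for $s\le0$. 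Carrying the $O(\epsilon)$ through the same induction sums the geometric series $1/(1-\rho)=c_0$ and yields the additive $O(c_0\epsilon)$. The residual bound then follows immediately: $r^{(j+1)}_k$ is a superposition, with amplitudes bounded by $M$, of residual shape functions indexed by updates $jK+k,\dots,(j+1)K+k-1$, whose dominant member is bounded by $\rho^{c(j,k)}\le\rho^{c(j,1)}$, giving $\|r^{(j+1)}_k\|_{L^2}\le O(c_0\epsilon+\rho^{c(j,1)})$. I expect the per-step estimate to be the main obstacle: one must make the passage from the non-equidistribution of the folded samples to the $\beta_{k,\ell}$ factor rigorous uniformly in $j$, so that the already-updated (already-small) modes obey the same inequality as the not-yet-updated ones.
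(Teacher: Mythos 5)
Your proof is correct and takes essentially the same route as the paper: the same per-step estimate $\|s^{E,(j)}_k\|_{L^2}\le O(\epsilon)+M^2\beta\bigl(\sum_{i<k}\|s^{(j+1)}_i\|_{L^2}+\sum_{i>k}\|s^{(j)}_i\|_{L^2}\bigr)$, derived exactly as in the paper (mean-zero shape functions, the $D^{ki}_h(m,n)-\gamma$ deviations, H\"older's inequality, and the $M^2$ amplitude-ratio factor), followed by induction over the Gauss-Seidel updates. Your only departure is presentational but welcome: the linear indexing $T=jK+k$ and the sliding-window recursion $e_T\le M^2\beta\sum_{s=T-K+1}^{T-1}e_s+O(\epsilon)$, with $e_s=1$ for virtual indices, make explicit and correctly solve for the exponent $c(j,k)=\lceil (jK+k)/(K-1)\rceil$ and the constant $c_0=1/(1-M^2(K-1)\beta)$, a step the paper handles only by invoking an induction ``similar to that for Theorem 3.5'' of its Jacobi-style predecessor.
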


\begin{proof}
First, we start with the case when $K=2$ and $\alpha_k(t)=1$ for all $t$ and $k$.

Recall that $p_k(t)$ can be considered as a diffeomorphism from $\R$ to $\R$ transforming data in the $t$ domain to the $p_k(t)$ domain. We have introduced the inverse-warping data
\begin{eqnarray*}
h_k^{(j)}(v)& =& r_k^{(j)}\circ p_k^{-1}(v)\\
&=&s_k^{(j)}(2\pi v)+ \sum_{\ell=1}^{k-1}  s_\ell^{(j+1)}(2\pi   p_\ell\circ p_k^{-1}(v))+\sum_{\ell=k+1}^K  s_\ell^{(j)}(2\pi   p_\ell\circ p_k^{-1}(v))\\
&\defeq & s_k^{(j)}(2\pi v)+ \kappa_k^{(j)}(2\pi v),
\end{eqnarray*}
where $v = p_k(t)$. After the folding map
\begin{eqnarray*}
\tau: \ \      \left(v, h_k(v) \right)   \mapsto    \left(\text{mod}(v,1),  h_k^{(j)}(v) \right),
\end{eqnarray*}
we have $(x_\ell,y_\ell)=\tau(v_\ell,s_k^{(j)}(2\pi v_\ell)+\kappa_k^{(j)}(2\pi v_\ell))$ for $\ell =1,\dots,L$ as $L$ i.i.d. samples of a random vector $(X_k,Y_k^{(j)})$, where $X_k\in[0,1]$. We can assume the target shape functions $s^{(j)}_k$ for all $k$ at the $j$th step are known in the analysis, although they are not known in practice. The partition-based regression method is applied (not necessary to know the distribution of the random vector $(X_k,Y_k^{(j)})$) to solve the following regression problem approximately
\begin{equation}\label{eqn:resp}
 \underset{s:\mathbb{R}\rightarrow \mathbb{R}}{\arg\min}\quad \E\{\left| s(2\pi X_k)-(Y_k^{(j)} -s_k^{(j)}(2\pi X_k))\right|^2\},
 \end{equation}
 and the solution is denoted as $s^{P,(j)}_k$. We would like to emphasize that $s^{P,(j)}_k$ is only used in the analysis and it is not computed in Algorithm \ref{alg:RDBR}. Recall notations in Definition \ref{def:wd}. By the partition-based regression method, when $x\in[t^h_m,t^h_m+h)$,
 \[
 s^{P,(j)}_1(2\pi x)=\frac{\sum_{n=0}^{N^h-1}\left( s^{(j)}_2(2\pi t^h_n)+O(\epsilon)\right)D^{12}_h(m,n)}{D^1_h(m)},
 \]
 and 
  \begin{equation}
  \label{eqn:s2}
 s^{P,(j)}_2(2\pi x)=\frac{\sum_{n=0}^{N^h-1}\left( s^{(j+1)}_1(2\pi t^h_n)+O(\epsilon)\right)D^{21}_h(m,n)}{D^2_h(m)},
 \end{equation}
 where $O(\epsilon)$ comes from the approximation of the $\LL$ function $s_i$ using the values on grid points $t^h_n$. Note that in the case of the Jacobi style RDBR in \cite{HZYregression}, the term $s^{(j+1)}_1$ in Equation \eqref{eqn:s2} is replaced with $s^{(j)}_1$, since the Jacobi style iteration doesn't use the latest estimations of shape functions. The following argument is similar to that for Lemma $3.3$ in \cite{HZYregression}. It is easy to check that 
 \[
  |s^{P,(j)}_1(2\pi x)|\leq O(\epsilon)+\frac{\sum_{n=0}^{N^h-1}s^{(j)}_2(2\pi t^h_n) \left(D^{12}_h(m,n)-\gamma\right)}{D^1_h(m)}
 \]
 and
  \[
  |s^{P,(j)}_2(2\pi x)|\leq O(\epsilon)+\frac{\sum_{n=0}^{N^h-1}s^{(j+1)}_1(2\pi t^h_n) \left(D^{21}_h(m,n)-\gamma\right)}{D^2_h(m)},
 \]
which imply that
 \begin{equation*}
 \|s^{P,(j)}_1\|_{L^2}=O(\epsilon) +\left( \sum_{n=0}^{N^h-1}\left(s^{(j)}_2(2\pi t^h_n) \right)^2h \right) ^{1/2}\left( \sum_{m=0}^{N^h-1}\left(\sum_{n=0}^{N^h-1}\left(  \frac{ D^{12}_h(m,n)-\gamma}{D^1_h(m)} \right)^2\right)  \right)^{1/2}
 \end{equation*}
 and
  \begin{equation*}
 \|s^{P,(j)}_2\|_{L^2}=O(\epsilon) +\left( \sum_{n=0}^{N^h-1}\left(s^{(j+1)}_1(2\pi t^h_n) \right)^2h \right) ^{1/2}\left( \sum_{m=0}^{N^h-1}\left(\sum_{n=0}^{N^h-1}\left(  \frac{ D^{21}_h(m,n)-\gamma}{D^2_h(m)} \right)^2\right)  \right)^{1/2}
 \end{equation*}
by the triangle inequality and H{\"o}lder's inequality. Since $s^{(j)}_2$ and $s^{(j+1)}_1$ are in $\LL$, we have
 \[
 \left( \sum_{n=0}^{N^h-1}\left(s^{(j)}_2(2\pi t^h_n) \right)^2h \right) ^{1/2}=\|s^{(j)}_2\|_{L^2}+O(\epsilon)
 \]
 and
  \[
 \left( \sum_{n=0}^{N^h-1}\left(s^{(j+1)}_1(2\pi t^h_n) \right)^2h \right) ^{1/2}=\|s^{(j+1)}_1\|_{L^2}+O(\epsilon).
 \]
 Since phase functions are in $\WD(M,N,h,\beta,\gamma)$, 
 \[
 \left( \sum_{m=0}^{N^h-1}\left(\sum_{n=0}^{N^h-1}\left(  \frac{ D^{ki}_h(m,n)-\gamma}{D^k_h(m)} \right)^2\right)  \right)^{1/2}\leq \beta<1.
 \]
 Hence,
 \begin{equation*}
  \|s^{P,(j)}_1\|_{L^2}\leq O(\epsilon)+\beta \|s^{(j)}_2\|^2_{L^2},
 \end{equation*}
 and
  \[
  \|s^{P,(j)}_2\|_{L^2}\leq O(\epsilon)+\beta \|s^{(j+1)}_1\|^2_{L^2}.
 \]
 By the conditions just listed immediately before Theorem \ref{thm:conv}, and the definition in Equation \eqref{eqn:regthm1}, we have 
  \begin{equation}\label{eqn:sest1}
  \|s^{E,(j)}_1\|_{L^2}\leq O(\epsilon)+  \|s^{P,(j)}_1\|_{L^2} \leq O(\epsilon)+\beta \|s^{(j)}_2\|^2_{L^2},
 \end{equation}
 and
  \[
  \|s^{E,(j)}_2\|_{L^2}\leq O(\epsilon)+  \|s^{P,(j)}_2\|_{L^2}\leq O(\epsilon)+\beta \|s^{(j+1)}_1\|^2_{L^2}.
 \]
Note that $s^{E,(j)}_1=s^{(j+1)}_1$,  it holds that
   \begin{equation}\label{eqn:sest2}
  \|s^{E,(j)}_2\|_{L^2}\leq O(\epsilon)+\beta \|s^{(j+1)}_1\|^2_{L^2}=O(\epsilon)+\beta \|s^{E,(j)}_1\|^2_{L^2}\leq O(\epsilon)+\beta^2 \|s^{(j)}_2\|^2_{L^2}.
 \end{equation}
 By Equation \eqref{eqn:sest1}, \eqref{eqn:sest2}, and the mathematical induction similar to that for Theorem $3.5$ in \cite{HZYregression}, it is easy to show that
 \[
  \|s^{E,(j)}_k\|_{L^2}\leq O(c_0 \epsilon+\beta^{c(j,k)} )
 \]
 where $c(j,k)= 2j+k$ and $c_0=\frac{1}{1-\beta}$ coming from the geometric sequence due to the summation of the $O(\epsilon)$ term for all $j\geq 0$. 
 
To care the general case, we need to extend the argument to $K>2$ and non-constant $\alpha_k$. We shall do this in two steps: first $K>2$ but $\alpha_k\equiv 1$ for all $k$, and then, finally, $K>2$ and varying $\alpha_k$. Rather than repeating the earlier argument in full detail, adapted to these more general situations, we indicate simply, for both steps, what extra estimates need to be taken into account. This may not give the sharpest estimate, but this is not a concern for now.
 
 Next, we prove the case when $K>2$ and $\alpha_k(t)=1$ for all $t$ and $k$. Similarly, by the definition of the partition-based regression and the triangle inequality, we have
 \begin{eqnarray*}
  |s^{P,(j)}_k(2\pi x)|&\leq & O(K\epsilon)+ \sum_{i=1}^{k-1} \frac{\sum_{n=0}^{N^h-1}s^{(j+1)}_i(2\pi t^h_n) \left(D^{ki}_h(m,n)-\gamma\right)}{D^k_h(m)}\\
 & & + \sum_{i=k+1}^K  \frac{\sum_{n=0}^{N^h-1}s^{(j)}_i(2\pi t^h_n) \left(D^{ki}_h(m,n)-\gamma\right)}{D^k_h(m)}.
 \end{eqnarray*}
 Hence, by the triangle inequality and the H{\"o}lder inequality again, it holds that
  \begin{eqnarray*}
 \|s^{P,(j)}_k\|_{L^2}&\leq & O(K\epsilon) +  \sum_{i=1}^{k-1}  \left( \sum_{m=0}^{N^h-1} \left( \frac{\sum_{n=0}^{N^h-1}s^{(j+1)}_i(2\pi t^h_n) \left(D^{ki}_h(m,n)-\gamma\right)}{D^k_h(m)} \right)^2 h \right)^{1/2}\\
&& +  \sum_{i=k+1}^K  \left( \sum_{m=0}^{N^h-1} \left( \frac{\sum_{n=0}^{N^h-1}s^{(j)}_i(2\pi t^h_n) \left(D^{ki}_h(m,n)-\gamma\right)}{D^k_h(m)} \right)^2 h \right)^{1/2}\\
 &\leq &O(K\epsilon) +  \sum_{i=1}^{k-1} \left( \sum_{n=0}^{N^h-1}\left(s^{(j+1)}_i(2\pi t^h_n) \right)^2h \right) ^{1/2}\left( \sum_{m=0}^{N^h-1}\left(\sum_{n=0}^{N^h-1}\left(  \frac{ D^{ki}_h(m,n)-\gamma}{D^k_h(m)} \right)^2\right)  \right)^{1/2}\\
 &&+  \sum_{i=k+1}^K  \left( \sum_{n=0}^{N^h-1}\left(s^{(j)}_i(2\pi t^h_n) \right)^2h \right) ^{1/2}\left( \sum_{m=0}^{N^h-1}\left(\sum_{n=0}^{N^h-1}\left(  \frac{ D^{ki}_h(m,n)-\gamma}{D^k_h(m)} \right)^2\right)  \right)^{1/2}\\
  &\leq &O(\epsilon) + \sum_{i=1}^{k-1}\beta \|s^{(j+1)}_i\|_{L^2} + \sum_{i= k+1}^K\beta \|s^{(j)}_i\|_{L^2}.
 \end{eqnarray*}
Since $s^{E,(j)}_k-s^{P,(j)}_k=O(\epsilon)$, we know
  \begin{equation*}
 \|s^{E,(j)}_k\|_{L^2}\leq O(\epsilon) + \sum_{i=1}^{k-1}\beta \|s^{(j+1)}_i\|_{L^2} + \sum_{i= k+1}^K\beta \|s^{(j)}_i\|_{L^2}.
 \end{equation*}
 Similar to the case of two components, by the equation just above and mathematical induction, one can show that
 \[
  \|s^{E,(j)}_k\|_{L^2}\leq O(c_0 \epsilon+\beta^{c(j,k)} )
 \]
 where $c_0=\frac{1}{1-(K-1)\beta}$ coming from the geometric sequence due to the summation of the $O(\epsilon)$ term for all $j\geq 0$, 
 \[
 c(j,k)=\left\lceil \frac{jK+k}{K-1}\right\rceil,
 \]
 and $\lceil\cdot\rceil$ is the ceiling operator.

 Finally, we prove the case when amplitude functions are smooth functions but not a constant $1$. If the instantaneous frequencies are sufficiently large, depending on $\epsilon$, $M$, $K$, and $C$, amplitude functions are nearly constant up to an approximation error of order $\epsilon$. The time domain $[0,1]$ is divided into sufficiently small intervals such that amplitude functions are nearly constant inside each interval. Accordingly, the samples $(x_\ell,y_\ell)=\tau(v_\ell,s_k(2\pi v_\ell)+\kappa_k(v_\ell))$ for $\ell =1,\dots,L$ of the random vector $(X_k,Y_k^{(j)})$ is divided into groups and the partition-based regression method is applied to estimate the regression function for each group. This is similar to data splitting in nonparametric regression. The bound of $ |s^{P,(j)}_k(x)|$ is a weighted average of the bound given by each group, and the weight comes the number of points in each group over the total number of samples. Note that $\|\alpha_k\|_{L^\infty}\leq M$. By repeating the analysis above, it is simple to show 
 \[
\|s^{E,(j)}_k\|_{L^2}\leq O(\epsilon)+ M^2\sum_{i=1}^{k-1}\beta \|s^{(j+1)}_i\|_{L^2} + M^2\sum_{i= k+1}^K\beta \|s^{(j)}_i\|_{L^2}.
\]
where $M^2$ comes from 
\[
 \frac{\alpha_i\circ p_k^{-1}(v)}{\alpha_k\circ p_k^{-1}(v)}
\]
in $\kappa_k(v)$ after warping. By mathematical induction similar to that for Theorem $3.5$ in \cite{HZYregression} again, it holds that
 \begin{equation}\label{eqn:sf}
  \|s^{E,(j)}_k\|_{L^2}\leq O(c_0 \epsilon+(M^2(K-1)\beta)^{c(j,k)} )
 \end{equation}
 where $c_0=\frac{1}{1-M^2(K-1)\beta}$ coming from the geometric sequence due to the summation of the $O(\epsilon)$ term for all $j\geq 0$, and
 \[
 c(j,k)=\left\lceil \frac{jK+k}{K-1}\right\rceil.
 \]
 This finishes the proof of the first part of Theorem \ref{thm:conv}.
 
 By the definition of $r_k^{(j)}$ in Equation \eqref{eqn:r} and the inequality in Equation \eqref{eqn:sf}, it holds that
 \[
\|r_k^{(j+1)}\|_{L^2}\leq O(c_0\epsilon+(M^2(K-1)\beta)^{c(j,1)}),
\]
which completes the proof of Theorem \ref{thm:conv}.
\end{proof}

Theorem \ref{thm:conv} shows that the regression function in each step of Algorithm \ref{alg:RDBR} decays, if $M^2(K-1)\beta<1$, in the $L^2$ sense up to a fixed accuracy parameter as the iteration number becomes large. Hence, the recovered shape function converges and the residual decays up to a fixed accuracy parameter, if $M^2(K-1)\beta<1$. When the iteration number is sufficiently large, the accuracy of the RDBR in Theorem \ref{thm:conv} is as good as a single step of regression in Theorem \ref{thm:reg}. Compared to the convergence theorem of the Jacobi RDBR in \cite{HZYregression}, both the Jacobi style and the Gauss-Seidel style recursive scheme have linear convergence, but the Gauss-Seidel style has a smaller rate of convergence; especially in the case of two components, the rate of convergence of the Gauss-Seidel style is $\beta^2$ while the one of the Jacobi style is $\beta$.

\begin{theorem}
\label{thm:ns} (Noise robustness of the Gauss-Seidel RDBR) 
Let $f_k(t)=\alpha_k(t)s_k(2\pi N_k \phi_k(t))$, $k=1,\dots,K$, be $K$ GIMF's and $f(t)=\sum_{k=1}^Kf_k(t)+n(t)$, where $n(t)$ is a random noise with a bounded variance $\sigma^2$. Under the other conditions introduced in Theorem \ref{thm:conv}, for the given $\epsilon$,  $\exists L_0(\epsilon,M,K,C,h,\sigma)$, if $L>L_0$, then 
 \[
  \|s^{E,(j)}_k\|_{L^2}\leq O(c_0 \epsilon+( M^2(K-1)\beta)^{c(j,k)} )
 \]
and
\[
\|r_k^{(j+1)}\|_{L^2}\leq O(c_0\epsilon+\left(M^2(K-1)\beta\right)^{c(j,1)}),
\]
for all $j\geq 0$ and $1\leq k\leq K$, where $c(j,k)=\left\lceil \frac{jK+k}{K-1}\right\rceil$, $c_0=\frac{1}{1-M^2(K-1)\beta}$ is a constant number, $s^{E,(j)}_k$  is defined in Equation \eqref{eqn:regthm1} and $r_k^{(j)}$ is defined in Equation \eqref{eqn:r}.
\end{theorem}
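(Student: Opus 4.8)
The plan is to reduce this statement to Theorem~\ref{thm:conv} by pinpointing the single place where additive noise enters the analysis, namely the conditional variance of the regression response, and then re-invoking Theorem~\ref{thm:reg} with an enlarged variance bound. First I would observe that the recursive skeleton of Algorithm~\ref{alg:RDBR} is untouched by the presence of $n(t)$: writing $f(t)=\sum_{k=1}^K\alpha_k(t)s_k(2\pi N_k\phi_k(t))+n(t)$ and performing the inverse-warping $h^{(j)}_k=r^{(j)}_k\circ p_k^{-1}/(\alpha_k\circ p_k^{-1})$, the data acquires exactly one extra summand $n\circ p_k^{-1}(v)/(\alpha_k\circ p_k^{-1}(v))$ relative to the noiseless case. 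Because $n(t)$ is never a GIMF with a recognized phase, this term is never subtracted and persists in every residual $r^{(j)}_k$; the crucial point is that it behaves as genuine regression noise rather than as a systematic, mode-shaped perturbation.

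The key observation I would make is that the \emph{bias} of the regression, i.e. the quantity $s^{E,(j)}_k$ defined in \eqref{eqn:regthm1}, is \emph{unchanged} by the noise. Indeed $s^{E,(j)}_k$ is a conditional expectation, and the warped noise $n\circ p_k^{-1}(v)/(\alpha_k\circ p_k^{-1}(v))$ is conditionally mean-zero given $X_k=x$ (it averages a mean-zero $n$ over the preimages of a fixed folded location), so it vanishes inside $\E\{Y^{(j)}_k-s^{(j)}_k(2\pi X_k)\mid X_k=x\}$. The mean-subtraction in Line~$8$ of Algorithm~\ref{alg:RDBR} removes any residual DC offset, so I may treat the noise as conditionally centered without loss of generality. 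Consequently the entire induction of Theorem~\ref{thm:conv}, which produces the contraction factor $M^2(K-1)\beta$ and the geometric accumulation of the $O(\epsilon)$ term, applies verbatim to the population regression functions $s^{R,(j+1)}_k$.

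What does change is the conditional variance $\Var(Y^{(j)}_k\mid X_k=x)$, and I would bound it next. Using $\alpha_k\geq 1/M$, the warped-noise term contributes variance at most $M^2\sigma^2$, so the effective variance in Theorem~\ref{thm:reg} is enlarged from its noiseless value (a constant depending only on $M$ and $K$) to one of the form $\mathrm{const}(M,K)+M^2\sigma^2$. Feeding this into Theorem~\ref{thm:reg} shows the single-step numerical regression error $\|\dot{s}^{(j+1)}_k-s^{R,(j+1)}_k\|_{L^2}$ is still $O(\epsilon)$ provided $Lh$ dominates $(\mathrm{const}(M,K)+M^2\sigma^2+\|s^R\|^2_{L^\infty})/\epsilon^2$, which is precisely what forces the enlarged threshold $L_0(\epsilon,M,K,C,h,\sigma)$ in the statement. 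With the unchanged bias bound and the controlled numerical error both in hand, the two displayed estimates follow exactly as in Theorem~\ref{thm:conv}.

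I expect the main obstacle to be the rigorous verification that, after the diffeomorphic warping $v=p_k(t)$ and the folding map $\tau$, the noise samples remain conditionally centered with a conditional variance that is uniformly bounded across all partition cells $[t^h_n,t^h_n+h)$. Since $p_k$ is a diffeomorphism with $1/M\leq|\phi_k'|\leq M$, the warping neither concentrates nor disperses the i.i.d.\ noise pathologically, and the well-differentiation hypothesis ($\gamma>0$) guarantees each cell retains enough samples for the empirical average to concentrate; making this uniform in the cell index so that a single $\sigma^2$-type constant suffices in Theorem~\ref{thm:reg} is the delicate step. A secondary subtlety is that the \emph{same} realization of $n(t)$ is fitted and partially subtracted at every iteration and every mode, so I must confirm that these correlated $O(\epsilon)$ contributions accumulate only into the geometric floor $c_0\epsilon$ rather than growing with $j$; this is guaranteed by the contraction $M^2(K-1)\beta<1$ already established in Theorem~\ref{thm:conv}.
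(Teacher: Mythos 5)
Your proposal is correct and follows essentially the same route as the paper, which dispatches this theorem in one line as an immediate consequence of Theorem \ref{thm:reg} and Theorem \ref{thm:conv}: the noise only enlarges the variance term $\sigma^2$ in the $L^2$ risk bound of the partition-based regression, which is absorbed by taking the sample-size threshold $L_0$ to depend additionally on $\sigma$, after which the contraction argument of Theorem \ref{thm:conv} applies unchanged. Your write-up simply makes explicit the details (conditional centering of the warped noise, the $M^2\sigma^2$ variance inflation from $\alpha_k\geq 1/M$) that the paper leaves implicit.
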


Theorem \ref{thm:ns} is an immediate result of Theorem \ref{thm:reg} and \ref{thm:conv}. It shows that as soon as the number of sampling points $L$ is large enough, the noise effect will be negligible.

Next, we discuss the case when the estimated number of components is inexact, i.e., $\bar{K}\neq K$ in Algorithm \ref{alg:RDBR}. Two main situations are concerned here: (1) $\bar{K}>K$ and $\{p_k\}_{1\leq k\leq \bar{K}}$ consists of all the fundamental phase functions and their multiples; (2) $\bar{K}<K$ and $\{p_k\}_{1\leq k\leq \bar{K}}$ are fundamental phase functions. The convergence analysis of other situations can be generalized from these two situations.

In the first situation when $\bar{K}>K$, $\{p_k'(t)=n_k N_{\tau_k} \phi'_{\tau_k}(t)\}_{1\leq k\leq \bar{K}}$ naturally belong to $K$ groups $\{\mathcal{G}_k\}_{1\leq k\leq K}$, each of which corresponds to the multiple of a fundamental instantaneous frequency $p'_k(t)$. For each $p_k$, the RDBR tries to identify a shape function and the result depends on the order of $p_k$ in the set $\{p_k\}$. To make sure that the shape functions corresponding to fundamental phases are approximately the ground truth shape functions, there is a sorting procedure in Line $4$ and $5$ of Algorithm \ref{alg:RDBR}. By the similar analysis in Theorem \ref{thm:conv}, one can show that the summation of the shape function estimations corresponding to the phase functions within each group $\mathcal{G}_k$ converges to the $k$th ground truth shape function $s_k$; i.e., let 
\begin{equation}
\label{eqn:cnv2}
\tilde{s}_k^{(j)}=\sum_{\tau\in\mathcal{G}_k} \bar{s}_\tau^{(j)},
\end{equation}
then $\lim_{j\rightarrow \infty} \tilde{s}_k^{(j)}=O(\epsilon)+s^k$. In particular, Theorem \ref{thm:conv2} characterizes its convergence rate as follows.

\begin{theorem}
\label{thm:conv2} (Convergence of the Gauss-Seidel RDBR when $\bar{K}>K$) Suppose  $\bar{K}>K$ and $\{p_k\}_{1\leq k\leq \bar{K}}$ in Algorithm \ref{alg:RDBR} contains all the fundamental phase functions. Under the conditions listed in the paragraph immediately preceding Theorem \ref{thm:conv}, we have  
 \[
  \| \tilde{s}_k^{(j)}-s_k\|_{L^2}\leq O(c_0 \epsilon+( M^2(K-1)\beta)^{c(j,k)} )
 \]
and
\[
\|\tilde{r}_k^{(j+1)}\|_{L^2}\leq O(c_0\epsilon+\left(M^2(K-1)\beta\right)^{c(j,1)})
\]
for all $j\geq 0$ and $1\leq k\leq K$, where $c(j,k)=\left\lceil \frac{jK+k}{K-1}\right\rceil$, $c_0=\frac{1}{1-M^2(K-1)\beta}$ is a constant number, $\tilde{s}_k^{(j)}$  is defined in Equation \eqref{eqn:cnv2} and $\tilde{r}_k^{(j)}$ is defined as
\[
\tilde{r}_k^{(j)} = r_{\tau_k}^{(j)},
\]
where $\tau_k\in\mathcal{G}_k$ satisfies $n_{\tau_k}\geq n_\nu$ for all $\nu\in\mathcal{G}_k$, and the notation $n_v$ is introduced in Equation \eqref{eqn:prior3}.
\end{theorem}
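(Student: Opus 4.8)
The plan is to reduce the analysis to that of Theorem~\ref{thm:conv} by treating each group $\mathcal{G}_k$ as a single effective mode whose aggregated shape estimate is $\tilde{s}_k^{(j)}=\sum_{\tau\in\mathcal{G}_k}\bar{s}_\tau^{(j)}$. The $K$ fundamental phase functions $\{N_k\phi_k\}_{1\leq k\leq K}$ are well-differentiated among themselves, so every interaction \emph{between} distinct groups can be controlled exactly as in the proof of Theorem~\ref{thm:conv}, feeding into the same $\beta$-driven recursion. The genuinely new ingredient is the interaction \emph{within} a single group, where the phases $\{n_\tau N_k\phi_k\}_{\tau\in\mathcal{G}_k}$ are integer multiples of one common fundamental and hence are \emph{not} well-differentiated; I would isolate these within-group terms and show they are harmless once the group estimates are summed.

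First I would record the key warping identity. Writing $s_k(2\pi x)=\sum_m \widehat{s_k}(m)e^{2\pi i m x}$ and regressing against a multiple phase $q=n_\tau N_k\phi_k$, one conditions on $\mod(q(t),1)=w$ and writes $m N_k\phi_k(t)=(m/n_\tau)(w+J)$ with $J=\lfloor q(t)\rfloor$. The factor $e^{2\pi i (m/n_\tau)J}$ equals $1$ when $n_\tau\mid m$ and otherwise averages to $O(\epsilon)$ once $J$ is approximately equidistributed modulo $n_\tau$ over the samples in each regression cell. Thus the regression attached to $q$ captures exactly the $n_\tau$-decimation $\sum_\ell \widehat{s_k}(n_\tau\ell)e^{2\pi i \ell w}$ of the shape spectrum. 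Because Lines~4--5 of Algorithm~\ref{alg:RDBR} sort the phases by ascending $N_\tau$ (hence ascending $n_\tau$ inside each group) and the fundamental $n_\tau=1$ is present by hypothesis, the fundamental phase is processed first, its decimation is the full $s_k$, and the single-mode argument of Theorem~\ref{thm:conv} already gives $\bar{s}_{\mathrm{fund}}^{(j)}=s_k+O(c_0\epsilon+(M^2(K-1)\beta)^{c(j,k)})$. The Gauss--Seidel update then removes mode $k$ from the running residual, so every higher multiple $n_\tau>1$ sees only an $O(\text{error})$ contribution from mode $k$ and returns an estimate of the same small order.

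With these two facts I would run the induction of Theorem~\ref{thm:conv} on the aggregated quantities. Summing the group estimates, $\tilde{s}_k^{(j)}$ equals the fundamental's estimate plus the higher-order multiple estimates, so $\tilde{s}_k^{(j)}-s_k$ obeys the same recursion in $j$ as $s^{E,(j)}_k$ did in Theorem~\ref{thm:conv}: the between-group perturbations enter through the $M^2(K-1)\beta$ factor and the partition/Lipschitz error through the $c_0\epsilon$ term, while the within-group cross terms (products of already-small multiple estimates, each mean-zero by Line~8) contribute only at higher order. This reproduces the stated bounds with the identical constants $c_0=\frac{1}{1-M^2(K-1)\beta}$ and exponent $c(j,k)=\lceil (jK+k)/(K-1)\rceil$. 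Since the distinguished index $\tau_k\in\mathcal{G}_k$ with maximal $n_{\tau_k}$ is the \emph{last} phase of group $k$ swept in iteration $j$, the quantity $\tilde{r}_k^{(j+1)}=r_{\tau_k}^{(j+1)}$ is precisely the running residual after group $k$ is fully processed, and its bound follows from its definition exactly as in Theorem~\ref{thm:conv}.

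The main obstacle is the within-group analysis, precisely because the multiple phases violate the well-differentiation hypothesis ($\gamma=0$ for two rationally dependent phases), so the $\beta$-mechanism cannot be invoked for them. The crux is therefore to prove the decimation/averaging identity rigorously: one must show that the off-multiple Fourier modes (those with $n_\tau\nmid m$) genuinely wash out under the folding map, which reduces to an equidistribution statement for $\lfloor n_\tau N_k\phi_k(t)\rfloor$ modulo $n_\tau$ across the samples landing in each partition cell. Establishing this equidistribution, together with verifying that the small multiple-estimates do not accumulate across the Gauss--Seidel sweeps, is the delicate part; everything else is a bookkeeping adaptation of the proof of Theorem~\ref{thm:conv}.
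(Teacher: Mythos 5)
Your proposal is correct in outline but establishes the theorem by a genuinely different mechanism than the paper does. The paper's own proof is a brief qualitative argument: it asserts that the phases $\{p_\nu\}_{\nu\in\mathcal{G}_k}$ within a group ``compete'' to absorb $L^2$-norm into their individual estimates $\bar{s}_\nu^{(j)}$, that this competition cannot hurt the convergence of the group sum $\tilde{s}_k^{(j)}$ in \eqref{eqn:cnv2}, and that extra group members in fact accelerate convergence because each additional regression further reduces the group error; no within-group spectral analysis is given at all. You instead make the within-group interaction the centerpiece: you observe (correctly) that rationally dependent phases violate well-differentiation ($\gamma=0$ in the sense of Definition \ref{def:wd}, so the $\beta$-recursion of Theorem \ref{thm:conv} is simply unavailable inside a group), and you neutralize them with a decimation identity --- regressing a $1$-periodic profile in $N_k\phi_k$ against the folded multiple phase $n_\tau N_k\phi_k$ retains only the Fourier modes whose index is divisible by $n_\tau$, the remaining modes averaging out provided $\lfloor n_\tau N_k\phi_k(t)\rfloor$ is equidistributed modulo $n_\tau$ within each regression cell. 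Combined with the ascending-frequency sort in Lines $4$--$5$ of Algorithm \ref{alg:RDBR}, which guarantees each fundamental is swept before its multiples so that the Gauss--Seidel subtraction leaves the multiples only decimations of an already-small error, this reproduces the $c(j,k)$ recursion on the group aggregates. What your route buys is an actual mechanism for the step the paper waves through, and it substantiates the paper's pre-theorem remark that the sorting procedure is what makes the fundamental-phase estimates track the true shape functions. What it costs is the equidistribution lemma you flag, plus the verification that the small multiple-phase estimates do not accumulate across sweeps; the paper never confronts either point because its competition argument is never quantified, so completing your proof requires supplying those two ingredients from scratch.
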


\begin{proof}
The proof of Theorem \ref{thm:conv2} can be generalized from Theorem \ref{thm:conv}. The phase functions $\{p_\nu\}_{\nu\in\mathcal{G}_k}$ compete with each other to obtain a larger $L^2$-norm in their own shape function estimation $\bar{s}_\nu^{(j)}$. This competition does not have negative effects on the convergence of Algorithm \ref{alg:RDBR}; in fact, the more members in $\mathcal{G}_k$, the faster convergence of the shape function estimation, because the estimation error $\| \tilde{s}_k^{(j)}-s_k\|_{L^2}$ can be reduced by the extra regressions due to more than one member in $\mathcal{G}_k$.
\end{proof}

It is worth pointing out that the Gauss-Seidel RDBR converges in the sense of Theorem \ref{thm:conv2} when $\bar{K}>K$; however, the Jacobi RDBR diverges when $\bar{K}>K$. 

In the second situation when $\bar{K}<K$, it is assumed that $\{p_k\}_{1\leq k\leq \bar{K}}$ are all fundamental phase functions. Since not all the fundamental phase functions are used in Algorithm \ref{alg:RDBR}, the components corresponding to the missing fundamental phase functions always remain in the residual signal $r_k^{(j)}$. Hence, the shape function estimation corresponding to the known phase functions cannot be improved by recursive regression, and Algorithm \ref{alg:RDBR} stops iteration quickly since the stopping criteria $|\epsilon_1-\epsilon_0|\leq \epsilon$ is soon satisfied. After Algorithm \ref{alg:RDBR} stops, by the arguments in Theorem \ref{thm:conv}, it is easy to see that the estimated shape function $\bar{s}_k$ satisfies
\[
\|\bar{s}_k-s_k\|_{L^2}\leq O(\epsilon+M^2\beta \sum_{\ell\neq k} \|s_\ell\|_{L^2})
\]
for $1\leq k\leq \bar{K}$.

%
%
%
%
        
\section{RDBR for the multiresolution mode decomposition}
\label{sec:MMD}

\subsection{Algorithm description}

In what follows, we modify the Gauss-Seidel RDBR in Algorithm \ref{alg:RDBR} to solve the multiresolution mode decomposition. The multiresolution mode decomposition problem aims at extracting each multiresolution intrinsic mode function $f_k(t)$ from their superposition $f(t)=\sum_{k=1}^K f_k(t)$, estimating their corresponding multiresolution expansion coefficients and the shape function series defined as follows.

\begin{definition}
  \label{def:m_IMF}
  A function 
  \begin{equation}
\label{eqn:m_IMF2}
f(t) = \sum_{n=-N/2}^{N/2-1} a_n\cos(2\pi n\phi(t))s_{cn}(2\pi N\phi(t))+\sum_{n=-N/2}^{N/2-1}b_n \sin(2\pi n\phi(t))s_{sn}(2\pi N\phi(t))
\end{equation}
is a \emph{multiresolution intrinsic mode function} (MIMF) of type $(M_0,M,N,\epsilon)$ defined on $[0,1]$, if the conditions below are satisfied:
\begin{itemize}
\item the shape function series $\{s_{cn}(t)\}$ and $\{s_{sn}(t)\}$ are in ${\cal S}_M$;
\item 
the multiresolution expansion coefficients $\{a_{n}\}$ and $\{b_{n}\}$ satisfy
\begin{align*}
    \sum_{n=-N/2}^{N/2-1}|a_n|\leq M, \quad  \quad  \sum_{n=-N/2}^{N/2-1}|a_n|-\sum_{n=-M_0}^{M_0-1}|a_n|\leq \epsilon, \\
  \sum_{n=-N/2}^{N/2-1}|b_n|\leq M,  \quad  \quad \sum_{n=-N/2}^{N/2-1}|b_n|-\sum_{n=-M_0}^{M_0-1}|b_n|\leq \epsilon;
\end{align*}
\item $\phi(t)$ satisfies
\begin{align*}
    \phi(t)\in C^\infty,  \quad  1/M \leq | \phi'|\leq M.
\end{align*}
\end{itemize}
\end{definition}
A MIMF is a generalization of the GIMF in Equation \eqref{P2} for time-dependent shape functions to describe the nonlinear and non-stationary time series adaptively, as we shall see in numerical examples in Section \ref{sec:NumEx}. A single MIMF itself is a superposition of GIMF's that share the same phase function; identifying its multiresolution expansion coefficients and shape function series requires separating these GIMF's.

The only difference to distinguish different GIMF's in a MIMF is the frequency of the oscillation in $\cos(2\pi n\phi(t))$ and $\sin(2\pi n\phi(t))$ (see Equation \eqref{eqn:m_IMF2}). Hence, if $\phi(t)$ and $N$ are known exactly, Fourier analysis in the coordinate of $\phi(t)$ (instead of $t$) can estimate the multiresolution expansion coefficients $\{a_n\}$, $\{b_n\}$, and the shape function series $\{s_{cn}\}$ and $\{s_{sn}\}$. However, in practice $\phi(t)$ and $N$ are only known approximately and the estimation error in $\phi(t)$ can be amplified by a factor $O(Nm)$, where $m$ is the frequency bandwidth of shape functions, if the Fourier analysis in the $\phi(t)$ coordinate is applied. This instability makes the Fourier approach less attractive in analyzing a MIMF.

An immediate question is whether the diffeomorphism-based regression idea in Section \ref{sec:MMD} can estimate the GIMF's in a MIMF. The main concerns are: when regression is applied to estimate one GIMF, other GIMF's acting as additive perturbation seem to cause a large estimation error; when $n\neq 0$, $\cos(2\pi n\phi(t))$ and $\sin(2\pi n\phi(t))$ as the amplitudes of GIMF's are occasionally zero, making it numerically infeasible to apply the diffeomorphism-based regression directly. Fortunately, for the first concern, the oscillatory functions $\cos(2\pi n\phi(t))$ and $\sin(2\pi n\phi(t))$ have zero mean in the coordinate of $\phi(t)$, exactly cancelling out the noise perturbation. As for the second concern, by the formulas of trigonometric functions, 
  \begin{eqnarray*}
\cos(2\pi m\phi(t))f(t) &=& \sum_{n=-N/2}^{N/2-1} \frac{a_n}{2}\left(\cos(2\pi (m+n)\phi(t)) + \cos(2\pi (m-n)\phi(t)) \right)s_{cn}(2\pi N\phi(t))\\
& &+\sum_{n=-N/2}^{N/2-1}\frac{b_n}{2}\left( \sin(2\pi (n+m)\phi(t))+\sin(2\pi (n-m)\phi(t))\right)s_{sn}(2\pi N\phi(t)),
\end{eqnarray*}
where there is only one term with a non-zero-mean amplitude, $\frac{a_m}{2}s_{cm}(2\pi N\phi(t))$, implying that the diffeomorphism-based regression could be able to estimate $a_m$ and $s_{cm}$ from $\cos(2\pi m\phi(t))f(t)$. Similarly, we can estimate $b_m$ and $s_{sm}$ form $\sin(2\pi m\phi(t))f(t)$. Hence, by applying the diffeomorphism-based regression to $\cos(2\pi n\phi(t))f(t)$ and $\sin(2\pi n\phi(t))f(t)$ for $n=-N/2,\dots,N/2-1$, we can estimate all the multiresolution expansion coefficients $\{a_n\}$, $\{b_n\}$, and the shape function series $\{s_{cn}\}$ and $\{s_{sn}\}$.

However, in the presence of a superposition of several MIMFs, a sequence of diffeomorphism-based analysis discussed just above cannot estimate all the multiresolution expansion coefficients and the shape function series. The reason for this inaccuracy is similar to the motivation of the RDBR for GMD in Section \ref{sec:GMD}. Hence, we expect that a modified version of the RDBR can solve the multiresolution mode decomposition problem under the condition of well-differentiated phase functions defined as follows.
  
\begin{definition}
  \label{def:wd}
  Suppose 
  \begin{eqnarray*}
f_k(t)=\sum_{n=-N_k/2}^{N_k/2-1} a_{n,k}\cos(2\pi n\phi_k(t))s_{cn,k}(2\pi N_k\phi_k(t))+\sum_{n=-N_k/2}^{N_k/2-1}b_{n,k} \sin(2\pi n\phi_k(t))s_{sn,k}(2\pi N_k\phi_k(t)).
\end{eqnarray*}
is a MIMF of type $(M_0,M,N_k,\epsilon)$ for $t\in[0,1]$, $k=1,\dots,K$, and \[\{p_k(t)=N_k\phi_k(t)\}_{1\leq k\leq K}\subset\WD(M,N,K,h,\beta,\gamma),\] then  $f(t)=\sum_{k=1}^K f_k(t)$ is said to be a well-differentiated superposition of MIMFs of type $(M_0,M,N,K,h,\beta,\gamma,\epsilon)$. Denote the set of all these functions $f(t)$ as $\WS(M_0,M,N,K,h,\beta,\gamma,\epsilon)$.
\end{definition}

Recall that $\mathcal{M}_\ell$ is the operator for computing the \emph{$\ell$-banded multiresolution approximation} to a MIMF $f(t)$ in Equation \eqref{eqn:m_IMF2}, i.e.,
\begin{equation}
\label{eqn:mmm_IMF}
\mathcal{M}_\ell(f)(t) = \sum_{n=-\ell}^{\ell} a_n\cos(2\pi n\phi(t))s_{cn}(2\pi N\phi(t))+\sum_{n=-\ell}^{\ell}b_n \sin(2\pi n\phi(t))s_{sn}(2\pi N\phi(t)).
\end{equation}  
Ideally, we hope to apply the modified RDBR algorithm in Algorithm \ref{alg:RDBR2} to $f(t) -\mathcal{M}_{n-1}(f)(t)$ for computing the multiresolution expansion coefficients $\{a_{n,k}\}_{1\leq k\leq K}$, $\{b_{n,k}\}_{1\leq k\leq K}$, and the shape functions $\{s_{cn,k}\}_{1\leq k\leq K}$ and $\{s_{sn,k}\}_{1\leq k\leq K}$, where $\mathcal{M}_{n-1}(f)(t)$ is available from previous computation. However, the modified RDBR can only return estimations approximately and $\mathcal{M}_{n-1}(f)(t)$ is only available approximately. This motivates us to repeatedly apply the same idea to refine the estimations. In summary, Algorithm \ref{alg:MMD} below identifies $\mathcal{M}_{M_0}(f_k)(t)$, its multiresolution expansion coefficients $\{a_{n,k}\}$ and $\{b_{n,k}\}$, and its shape function series $\{s_{cn,k}\}$ and $\{s_{sn,k}\}$ from the superposition $f(t)\in \WS(M_0,M,N,K,h,\beta,\gamma,\epsilon)$ based on repeating the modified RDBR in Algorithm \ref{alg:RDBR2}. In the pseudo-code in Algorithm \ref{alg:MMD}, the input and output of Algorithm \ref{alg:RDBR2} is denoted as 
\[
[\{\bar{s}_k\}_{1\leq \bar{K}},\{\bar{f}_k\}_{1\leq \bar{K}}]=RDBR(f,\{p_k\}_{1\leq \bar{K}},n,tp,\epsilon,J).
\]

{The MMD algorithm in Algorithm \ref{alg:MMD} is essentially $J_1J_2M_0$ iterations of recursive regression, while the GMD solved by Algorithm \ref{alg:RDBR} requires $J_1$ iterations when we set $J=J_1$. The main computational cost in each iteration in both algorithms is the same and depends on the cost of regression. Hence, the computational cost of the MMD over the one of GMD is always $O(J_2 M_0)$ independent of data and computational environment. In the current version of code, both GMD and MMD are implemented with a knot-free spline regression in \cite{Regression}, which might not be efficient enough. In a parallel paper concerning the efficient numerical implementation of the MMD (including GMD) algorithm, we have proposed fast algorithms to solve the MMD problem \cite{fMMD}.
}

\vspace{3in}
\begin{algorithm2e}[H]
\label{alg:RDBR2}
\caption{A modified RDBR for the multiresolution mode decomposition.}
Input: $L$ points of i.i.d. measurement $\{f(t_\ell)\}_{\ell=1,\dots,L}$ with $t_\ell \in [0,1]$, estimated instantaneous phases $\{p_k\}_{k=1,\dots,\bar{K}}$, an accuracy parameter $\epsilon<1$, the maximum iteration number $J$, frequency $n$, the type of amplitude $tp$.

Output: the estimated shape functions $\{\bar{s}_k\}_{k=1,\dots,\bar{K}}$, and the estimated modes $\{\bar{f}_k(t)\}_{k=1,\dots,\bar{K}}$ at the sampling grid points $\{t_\ell\}_{1\leq \ell\leq L}$. 

Initialize: let $r_1^{(0)}=f$, $\epsilon_1=\epsilon_2=1$, $\epsilon_0=2$, the iteration number $j=0$, $\dot{s}^{(0)}_k=0$,  $\bar{s}_k^{(0)}=0$, and $\bar{f}_k=0$  for all $k=1,\dots,\bar{K}$.

Compute $N_k$ as the integer nearest to the average of $p'_k(t)$ for $k=1,\dots,\bar{K}$.

Sort $\{N_k\}_{1\leq k\leq \bar{K}}$ in an ascending order and reorder the amplitude and phase functions accordingly.

\While{$j<J$, $\epsilon_1> \epsilon$, $\epsilon_2>\epsilon$, and $|\epsilon_1-\epsilon_0|>\epsilon$}{ 

\For{$k=1,\dots,\bar{K}$}{
\If {$tp=1$}{
Let $g(t) =  \cos(2\pi \frac{n}{N_k} p_k(t))$ and evaluate $h^{(j)}_k(t)=g(p_k^{-1}(t))r_k^{(j)}(p_k^{-1}(t))$.
}
\Else{
Let $g(t) =  \sin(2\pi \frac{n}{N_k} p_k(t))$ and evaluate $h^{(j)}_k(t)= g(p_k^{-1}(t))r_k^{(j)}(p_k^{-1}(t))$.
}

Repeat Line $9$ to $11$ in Algorithm \ref{alg:RDBR} to compute $\dot{s}_k^{(j+1)}$.

\If {$m=0$}{
Let $\dot{f}^{(j+1)}_k(t)= \dot{s}^{(j+1)}_k(2 \pi p_k(t))$.
}
\Else{
Let $\dot{f}^{(j+1)}_k(t)= 2g(t)\dot{s}^{(j+1)}_k(2 \pi p_k(t))$ and $\dot{s}_k^{(j+1)}=2\dot{s}_k^{(j+1)}$.
}

Let $\bar{s}_k^{(j+1)}=\bar{s}_k^{(j)}+\dot{s}_k^{(j+1)}$.

Update $\bar{f}_k(t)\leftarrow \bar{f}_k(t) + \dot{f}^{(j+1)}_k(t)$.

\If{$k<\bar{K}$}{
Let $r_{k+1}^{(j)}=r_k^{(j)}- \dot{f}^{(j+1)}_k(t)$.
}
\Else{
Let $r_{1}^{(j+1)}=r_k^{(j)}- \dot{f}^{(j+1)}_k(t)$.
}

    }

Update $\epsilon_0=\epsilon_1$, $\epsilon_1=\|r_1^{(j+1)}\|_{L^2}$, $\epsilon_2=\max_{k}\{\|\dot{s}_k^{(j+1)}\|_{L^2}\}$.

Set $j = j + 1$. 
}
\end{algorithm2e}

\vspace{2in}

\begin{algorithm2e}[H]
\label{alg:MMD}
\caption{Multiresolution mode decomposition.}
Input: $L$ points of i.i.d. measurement $\{f(t_\ell)\}_{\ell=1,\dots,L}$ with $t_\ell \in [0,1]$, estimated instantaneous phases $\{p_k\}_{k=1,\dots,\bar{K}}$, accuracy parameters $\epsilon_1$ and $\epsilon_2$, the maximum iteration numbers $J_1$ and $J_2$, and the band-width parameter $M_0$.

Output: $\mathcal{M}_{M_0}(f_k)(t)$ at the sampling grid points $\{t_\ell\}_{1\leq \ell\leq L}$, its multiresolution expansion coefficients $\{a_{n,k}\}_{n=-M_0,\dots,M_0}$ and $\{b_{n,k}\}_{n=-M_0,\dots,M_0}$, and its shape function series $\{s_{cn,k}\}_{n=-M_0,\dots,M_0}$ and $\{s_{sn,k}\}_{n=-M_0,\dots,M_0}$ for $1\leq k\leq \bar{K}$. 

Initialize: let $a_{n,k}=0$, $b_{n,k}=0$, $s_{cn,k}=0$, $s_{sn,k}=0$, $\mathcal{M}_{M_0}(f_k)=0$ for all $k$ and $n$; let $c=\|f\|_{L^2}$; let $e=1$; let $r^{(0)}=f$.

\For{$j=1,2,\dots,J_1,$}{

\For{$n=0,1,-1,\dots,M_0,-M_0$}{

$[\{\bar{s}_k\}_{1\leq \bar{K}},\{\bar{f}_k\}_{1\leq \bar{K}}]=RDBR(r^{(j-1)},\{p_k\}_{1\leq \bar{K}},n,1,\epsilon_2,J_2)$.

\For{$k=1,\dots,\bar{K}$}{
$s_{cn,k}\leftarrow s_{cn,k}+\bar{s}_k$.

Update $\mathcal{M}_{M_0}(f_k)(t)\leftarrow \mathcal{M}_{M_0}(f_k)(t) + \bar{f}_k$.

Compute $r^{(j)}= r^{(j-1)}-\bar{f}_k$.
}

\If {$|n|>0$}
{

$[\{\bar{s}_k\}_{1\leq \bar{K}},\{\bar{f}_k\}_{1\leq \bar{K}}]=RDBR(r^{(j)},\{p_k\}_{1\leq \bar{K}},n,0,\epsilon_2,J_2)$.

\For{$k=1,\dots,\bar{K}$}{

$s_{sn,k}\leftarrow s_{sn,k}+\bar{s}_k$.

Update $\mathcal{M}_{M_0}(f_k)(t)\leftarrow \mathcal{M}_{M_0}(f_k)(t) + \bar{f}_k$.

Update $r^{(j)}\leftarrow r^{(j)}-\bar{f}_k$.
}
}
%
%
}

If $\|r^{(j)}\|_{L^2}/c\leq\epsilon_1$, then break the for loop.

\If{$\|r^{(j)}\|_{L^2}/c\geq e-\epsilon_1$}{
Break the for loop.
}
\Else{
$e=\|r^{(j)}\|_{L^2}/c$.
}
}

Let $a_{n,k}=\|s_{cn,k}\|_{L^2}$ and $s_{cn,k}=s_{cn,k}/a_{n,k}$ for all $k$ and $n$.

Let $b_{n,k}=\|s_{sn,k}\|_{L^2}$ and $s_{sn,k}=s_{sn,k}/b_{n,k}$ for all $k$ and $n$.
\end{algorithm2e}

\vspace{3in}

\subsection{Convergence analysis}
\label{sec:thmRDBR2}

In this section, an asymptotic analysis on the convergence of the multiresolution mode decomposition in Algorithm \ref{alg:MMD} is introduced. We assume that the number of MIMFs is known exactly, i.e., $\bar{K}=K$; we also focus on the case when $J_2=1$ in Algorithm \ref{alg:MMD}; the analysis for other cases can be directly generalized. Similar to the theory in Section \ref{sec:GMD}, the main analysis is to prove that the for-loop in Line $5$ in Algorithm \ref{alg:MMD} is able to approximately identify the multiresolution expansion coefficients $a_{n,k}$ and $b_{n,k}$, and estimate the shape function series $s_{cn,k}$ and $s_{sn,k}$ for $1\leq k\leq K$ and $-M_0\leq n\leq M_0$. Though the Gauss-Seidel iteration is used in Algorithm \ref{alg:MMD} for faster convergence, we will show a looser  bound of the convergence by using the Jacobi style iteration for the purpose of simplicity.

In the $(j+1)$th iteration in Line $4$, the residual function is 
  \begin{eqnarray}
  \label{eqn:res1}
r^{(j)}(t)&=&\sum_{k=1}^K  \sum_{n=-M_0}^{M_0} \cos(2\pi n\phi_k(t))s_{cn,k}^{(j)}(2\pi N_k\phi_k(t))\nonumber\\
&&+\sum_{k=1}^K \sum_{n=-M_0}^{M_0} \sin(2\pi n\phi_k(t))s_{sn,k}^{(j)}(2\pi N_k\phi_k(t))+O(\epsilon),
\end{eqnarray}
where the superscript $^{(j)}$ indicates the new target shape function series in the $(j+1)$th iteration of Line $4$ in Algorithm \ref{alg:MMD}, and the multiresolution expansion coefficients have been absorbed in the shape functions. Since we adopt the Jacobi style iteration in the analysis, the residual function in \eqref{eqn:res1} remains the same throughout the $j$th iteration, i.e. Line $9$, $10$, $15$, and $16$ were postponed until the end of the for-loop in Line $5$. For the $k$th component and frequency $n$, the diffeomorphism-based analysis with a constant amplitude function and the phase function $N_k\phi_k(t)$ is applied to $2\cos(2\pi n\phi_k(t))r^{(j)}(t)$ and $2\sin(2\pi n\phi_k(t))r^{(j)}(t)$ to estimate the target shape functions $s_{cn,k}^{(j)}$ and $s_{sn,k}^{(j)}$, respectively. 

In particular, 
\begin{eqnarray}
\label{eqn:r3}
&&2\cos(2\pi n\phi_k(t))r^{(j)}(t)\nonumber\\
&=& \sum_{k=1}^K  \sum_{m=-M_0}^{M_0} 2 \cos(2\pi n\phi_k(t))\cos(2\pi m\phi_k(t))s_{cm,k}^{(j)}(2\pi N_k\phi_k(t))\nonumber\\
&&+\sum_{k=1}^K \sum_{m=-M_0}^{M_0}2 \cos(2\pi n\phi_k(t)) \sin(2\pi m\phi_k(t))s_{sm,k}^{(j)}(2\pi N_k\phi_k(t))+O(\epsilon),
\end{eqnarray}
where the term $2\cos(2\pi n\phi_k(t))\cos(2\pi n\phi_k(t))s_{cn,k}^{(j)}(2\pi N_k\phi_k(t))$ results in
\[
s_{cn,k}^{(j)}(2 \pi N_k\phi_k(t))+\cos(4\pi n\phi_k(t))s_{cn,k}^{(j)}(2 \pi N_k\phi_k(t)).
\]
$s_{cn,k}^{(j)}(2 \pi N_k\phi_k(t))$ is the only term with a non-zero-mean amplitude function in \ref{eqn:r3}. Following the same notations as in Section \ref{sec:thmRDBR}, let
\begin{eqnarray*}
h_{cn,k}^{(j)}(v)& =& 2\left( \cos(2\pi n\phi_k(t))r^{(j)}\right)\circ p_k^{-1}(v)\\
&= & s_{cn,k}^{(j)}(2\pi v)+ \kappa_{cn,k}^{(j)}(2\pi v)+O(\epsilon),
\end{eqnarray*}
where $v = p_k(t)=N_k\phi_k(t)$ and $\kappa_{cn,k}^{(j)}$ comes from other terms in \eqref{eqn:r3}. After the folding map
\begin{eqnarray*}
\tau: \ \      \left(v, h_{cn,k}^{(j)}(v) \right)   \mapsto    \left(\text{mod}(v,1),  h_{cn,k}^{(j)}(v) \right),
\end{eqnarray*}
we have $(x_\ell,y_\ell)=\tau(v_\ell,s_{cn,k}^{(j)}(2\pi v_\ell)+\kappa_{cn,k}^{(j)}(2\pi v_\ell)+O(\epsilon))$ for $\ell =1,\dots,L$ as $L$ i.i.d. samples of a random vector $(X_{cn,k},Y_{cn,k}^{(j)})$, where $X_{cn,k}\in[0,1]$. Hence, the regression problem in \eqref{eqn:r3} can be written as
\begin{eqnarray}
s_{cn,k}^{R,(j+1)} &=& \underset{s:\mathbb{R}\rightarrow \mathbb{R}}{\arg\min}\quad \E\{\left| s(2\pi X_{n,k})-Y_{cn,k}^{(j)} \right|^2\}\nonumber\\
&=&  \underset{s:\mathbb{R}\rightarrow \mathbb{R}}{\arg\min}\quad \E\{\left| s(2\pi X_{n,k})-(Y_{cn,k}^{(j)} -s_{cn,k}^{(j)}(2\pi X_{cn,k}))\right|^2\}-s_{cn,k}^{(j)},
\label{eqn:reg3}
\label{eqn:regthm2}
\end{eqnarray}
we have 
\[
s_{cn,k}^{R,(j+1)} = s_{cn,k}^{(j)} + s^{E,(j)}_{cn,k},
\]
where
\begin{equation}
\label{eqn:regthm12}
s^{E,(j)}_{cn,k}(2\pi x)\defeq \E\{Y_{cn,k}^{(j)} - s_{cn,k}^{(j)}(2\pi X_{cn,k})|X_{cn,k}=x\}\neq 0
\end{equation}
due to the perturbation caused by $\kappa_{cn,k}^{(j)}$. In the next iteration, the target shape function $s^{(j+1)}_{cn,k}= - s^{E,(j)}_{cn,k}$. Hence, the key convergence analysis is to show that $s^{E,(j)}_{cn,k}$ decays as $j\rightarrow \infty$. 

Similarly, when the diffeomorphism-based regression is applied to $s_{sn,k}^{(j)}$, we are able to estimate $s_{sn,k}^{(j)}$ approximately and the estimation error $s^{E,(j)}_{sn,k}$ is the estimation target $s^{(j+1)}_{sn,k}= - s^{E,(j)}_{sn,k}$ in the next iteration.

In what follows, we assume that an accuracy parameter $\epsilon$ is fixed. Furthermore, suppose $f(t)\in\WS(M_0,M,N,K,h,\beta,\gamma,\epsilon)$, and all shape functions are in the space $\LL$. Under these conditions, all regression functions $s^{(j)}_{cn,k}\in\LL$, $s^{(j)}_{sn,k}\in\LL$, and have bounded $L^\infty$ norm depending only on $M$ and $K$. By Algorithm \ref{alg:RDBR2}, we have the nice and key conditions that $\int_0^1 s^{(j)}_{cn,k}(2\pi t)dt=0$ and $\int_0^1 s^{(j)}_{sn,k}(2\pi t)dt=0$ at each iteration for all $n$, $k$ and $j$. Note that $\Var(Y_{cn,k}^{(j)}|X_{cn,k}=x)$ and $\Var(Y_{sn,k}^{(j)}|X_{sn,k}=x)$ are bounded by a constant depending only on $M$ and $K$ as well. For the fixed $\epsilon$ and $C$, there exists $h_0(\epsilon,C)$ such that $C^2h^2<\epsilon^2$ if $0<h<h_0$. By the abuse of notation, $O(\epsilon)$ is used instead of $Ch$ later. By Theorem \ref{thm:reg}, for the fixed $\epsilon$, $M_0$, $M$, $K$, $C$, and $h$, there exists $L_0(\epsilon,M_0,M,K,C,h)$ such that the $L^2$ error of the partition-based regression is bounded by $\epsilon^2$. In what follows, $h$ is smaller than $h_0$, $L$ is larger than $L_0$, and hence all estimated regression functions approximate the ground truth regression function with an $L^2$ error of order $\epsilon$. Under these conditions and assumptions, $s^{E,(j)}_{cn,k}$ and $s^{E,(j)}_{sn,k}$ are shown to decay to $O(\epsilon)$ as $j\rightarrow \infty$, as long as $L$ and $N$ are sufficiently large, and the decay rate will be estimated.

\begin{theorem}
\label{thm:conv3} (Convergence of the multiresolution mode decomposition) Under the conditions listed in the paragraph immediately preceding this theorem, suppose $N_k$ and $\phi_k(t)$ are known for all $k$, and $J_2=1$ in Algorithm \ref{alg:MMD}, as long as $L$ and $N$ are sufficiently large, we have  
 \[
  \|s^{E,(j)}_{cn,k}\|_{L^2}\leq O(c_0 \epsilon+( \beta  (2M_0+1)(K-1))^j ),
 \]
 and 
  \[
  \|s^{E,(j)}_{sn,k}\|_{L^2}\leq O(c_0 \epsilon+(\beta  (2M_0+1)(K-1))^j )
 \]
for all $j\geq 0$ and $1\leq k\leq K$, where $c_0=\frac{1}{1-\beta  (2M_0+1)(K-1)}$ is a constant number, $s^{E,(j)}_{cn,k}$ is defined in Equation \eqref{eqn:regthm12} and $s^{E,(j)}_{sn,k}$ is defined similarly.
\end{theorem}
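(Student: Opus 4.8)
The plan is to mirror the proof of Theorem~\ref{thm:conv}, reducing the statement to a one-step contraction estimate for the regression errors $s^{E,(j)}_{cn,k}$ and $s^{E,(j)}_{sn,k}$ and then closing the recursion by induction. Fix a mode index $k$ and a frequency $n$, and treat the cosine case (the sine case being identical). As set up in \eqref{eqn:r3}--\eqref{eqn:regthm12}, the partition-based regression is applied to the folded samples of $h^{(j)}_{cn,k}(v)=2\big(\cos(2\pi n\phi_k)\,r^{(j)}\big)\circ p_k^{-1}(v)=s^{(j)}_{cn,k}(2\pi v)+\kappa^{(j)}_{cn,k}(2\pi v)+O(\epsilon)$ with the current estimate subtracted, so that its output $s^{P,(j)}_{cn,k}$ is a bin average of $\kappa^{(j)}_{cn,k}$ and satisfies $s^{E,(j)}_{cn,k}=s^{P,(j)}_{cn,k}+O(\epsilon)$ by the regression estimate used in Theorem~\ref{thm:conv}. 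First I would write this bin average explicitly as in \eqref{eqn:pb} over the cells counted by the $D^{ki}_h(m,n)$, reducing the theorem to bounding the averaged perturbation.

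The ingredient that is new relative to Theorem~\ref{thm:conv} is the splitting of $\kappa^{(j)}_{cn,k}$ into a same-mode part ($k$) and a cross-mode part ($k'\neq k$). For the same-mode part I would expand $2\cos(2\pi n\phi_k)$ against every cosine and sine term of $f_k$ by the product-to-sum identities used in \eqref{eqn:r3}: apart from the target term $s^{(j)}_{cn,k}(2\pi N_k\phi_k)$, every amplitude produced is of the form $\cos(2\pi(n\pm m)\phi_k)$ or $\sin(2\pi(m\pm n)\phi_k)$ with $n\pm m\neq 0$, hence oscillatory and mean-zero in the $\phi_k$-coordinate. After warping by $p_k=N_k\phi_k$ these become slowly varying factors $\cos(2\pi(n\pm m)v/N_k)$ modulating the $1$-periodic, mean-zero shape functions $s^{(j)}_{cm,k}(2\pi v)$; since each such factor is nearly constant over one period of the shape function once $N$ is large, the zero-mean condition $\int_0^1 s^{(j)}_{cm,k}(2\pi t)\,dt=0$ forces every such bin average to cancel up to $O(\epsilon)$. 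Thus the whole same-mode contribution is absorbed into $O(\epsilon)$, and only the target survives with a non-zero-mean amplitude.

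For the cross-mode part I would argue exactly as in Theorem~\ref{thm:conv}: each warped term $2\cos(2\pi n\phi_k)\cos(2\pi m\phi_{k'})\,s^{(j)}_{cm,k'}$ and its sine analogue has a bounded amplitude and a $1$-periodic, mean-zero factor, so the triangle inequality, H\"older's inequality, and the well-differentiation bound $\beta$ of Definition~\ref{def:wd} (the uniform control of the normalized cell-count fluctuations $D^{ki}_h(m,n)-\gamma$) give
\[
\|s^{P,(j)}_{cn,k}\|_{L^2}\leq O(\epsilon)+\beta\sum_{k'\neq k}\sum_{m=-M_0}^{M_0}\big(\|s^{(j)}_{cm,k'}\|_{L^2}+\|s^{(j)}_{sm,k'}\|_{L^2}\big).
\]
Counting the competing shape functions ($2M_0+1$ per competing mode across the $K-1$ other modes, in line with the analytic formulation of the MIMF) produces the factor $(2M_0+1)(K-1)$. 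Since the analysis here adopts the Jacobi-style update with $s^{(j+1)}_{cn,k}=-s^{E,(j)}_{cn,k}$, taking the maximum over $n,k$ and the two types gives the one-step recursion
\[
\max_{n,k}\|s^{(j+1)}_{cn,k}\|_{L^2}\leq O(\epsilon)+\beta(2M_0+1)(K-1)\,\max_{n,k}\|s^{(j)}_{cn,k}\|_{L^2},
\]
with the identical bound for the sine errors. Under $\beta(2M_0+1)(K-1)<1$, the same induction and geometric-series summation as in Theorem~\ref{thm:conv} then deliver the claimed decay at rate $(\beta(2M_0+1)(K-1))^j$ with constant $c_0=\frac{1}{1-\beta(2M_0+1)(K-1)}$.

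The step I expect to be the main obstacle is making the same-mode cancellation rigorous and uniform. Unlike the cross-mode terms, which are controlled purely by the combinatorial well-differentiation counts, the same-mode terms rely on the slow variation of $\cos(2\pi(n\pm m)v/N_k)$ relative to the unit period of the folded shape function, and quantifying the resulting discrepancy as $O(\epsilon)$ demands a precise ``$N$ sufficiently large'' estimate that is uniform over all partition bins, all frequency pairs with $|n|,|m|\leq M_0$, and all iterations $j$. Controlling this modulation/aliasing error between a slow amplitude and a fast mean-zero periodic factor is the delicate part; the remainder is a direct transcription of the two-component and $K$-component analysis already carried out for Theorem~\ref{thm:conv}.
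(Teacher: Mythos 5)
Your proof has the same skeleton as the paper's: split the perturbation $\kappa^{(j)}_{cn,k}$ into a same-mode part and a cross-mode part, absorb the same-mode part into $O(\epsilon)$, control the cross-mode part via H\"older's inequality and the well-differentiation constant $\beta$ to get the contraction factor $\beta(2M_0+1)(K-1)$, use the Jacobi-style update, and close by induction with a geometric series. However, the step you yourself flag as the crux --- the same-mode cancellation --- is justified by a mechanism that does not work. You argue that each warped same-mode term $\cos\big(2\pi(n\pm m)v/N_k\big)\,s^{(j)}_{cm,k}(2\pi v)$ has a slowly varying amplitude, so the zero-mean condition $\int_0^1 s^{(j)}_{cm,k}(2\pi t)\,dt=0$ ``forces every such bin average to cancel.'' That argument bounds the \emph{unconditional} mean of the term (its integral over each full period, hence over $[0,N_k]$), but the regression error \eqref{eqn:regthm12} is a \emph{conditional} expectation given the folded location $x=\text{mod}(v,1)$, i.e., the bin average in \eqref{eqn:pb}. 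Inside a single bin $[t^h_m,t^h_m+h)$ of the folded coordinate the shape factor never traverses its period: it is essentially the constant $s^{(j)}_{cm,k}(2\pi x)$, so its zero mean is irrelevant there. (Concretely: the perturbation $s(2\pi v)$ with constant amplitude has zero unconditional mean, yet its conditional mean given $x$ is $s(2\pi x)$, which is not small --- zero unconditional mean says nothing about a regression function.)

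The cancellation that actually occurs, and that the paper uses in \eqref{eqn:sum}, is the opposite one: conditional on the folded location $x$, the samples in a bin come from the $\sim N_k$ distinct periods $v\approx q+x$, $q=0,\dots,N_k-1$, and the amplitude values $\cos\big(2\pi(n\pm m)(q+x)/N_k\big)$ are then (nearly) equally spaced samples of an integer number of full cosine cycles, since $0<|n\pm m|\leq 2M_0<N$; their sum cancels. So it is the \emph{amplitude}, not the shape, that has zero conditional mean --- this is the ``oscillation in the smooth amplitudes cancels the summation'' stationary-phase argument of the paper, and it is exactly where the hypotheses ``$L$ and $N$ sufficiently large'' enter (enough periods per bin and enough samples per period, uniformly over bins, over $|n|,|m|\leq M_0$, and over $j$). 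Your cross-mode estimate and concluding induction do match the paper's proof; note also that the paper separately isolates the uniform part $\gamma\sum_{n}\bar{\kappa}^{(j)}_{cn,1}(2\pi t^h_n)/D^1_h(m)$ of the cross-mode bin average and kills it via $\gamma/D^k_h(m)\leq h$ together with the shape functions' zero mean and Lipschitz continuity --- that is the place where the zero-mean condition genuinely belongs. As written, though, the same-mode step of your proposal would fail, so there is a real gap.
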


\begin{proof}
We only prove the case when $K=2$; the proof for a general case follows a similar discussion of the proof of Theorem \ref{thm:conv}.

Recall notations in Definition \ref{def:wd}. When the partition-based regression method is applied to solve the regression problem in \eqref{eqn:reg3} to obtain the approximate regression function $s^{P,(j)}_{cn,1}(2\pi x)$, we have
 \[
 s^{P,(j)}_{cn,1}(2\pi x)=\frac{\sum_{n=0}^{N^h-1}\left( \kappa^{(j)}_{cn,1}( 2\pi t^h_n)+O(\epsilon)\right)D^{12}_h(m,n)}{D^1_h(m)},
 \]
 for $x\in[t^h_m,t^h_m+h)$, where $O(\epsilon)$ comes from the approximation of $\LL$ functions using the values on grid points $t^h_n$, and the $O(\epsilon)$ term in the residual function in \eqref{eqn:res1}. The following argument is similar to that for Lemma $3.3$ in \cite{HZYregression}. It is easy to check that 
 \begin{equation}\label{eqn:inq1}
  |s^{P,(j)}_{cn,1}(2\pi x)|\leq O(\epsilon)+\frac{\sum_{n=0}^{N^h-1} \bar{\kappa}^{(j)}_{cn,1}( 2\pi t^h_n) \left(D^{12}_h(m,n)-\gamma\right)}{D^1_h(m)} + \gamma \frac{\sum_{n=0}^{N^h-1} \bar{\kappa}^{(j)}_{cn,1}(2\pi  t^h_n)}{D^1_h(m)},
 \end{equation}
 where
\[
 \bar{\kappa}^{(j)}_{cn,k}(2\pi v) :=\kappa^{(j)}_{cn,k}(2\pi v) -  \dot{\kappa}^{(j)}_{cn,k}(2\pi v),
 \]
 and
  \begin{eqnarray*}
 \dot{\kappa}^{(j)}_{cn,k}(2\pi v)&:=&\sum_{m=-M_0}^{M_0} 2 \cos(2\pi\frac{n}{N_k}v)\cos(2\pi \frac{m}{N_k}v)s_{cm,k}^{(j)}(2\pi v)\nonumber\\
&&+ \sum_{m=-M_0}^{M_0}2 \cos(2\pi \frac{n}{N_k}v) \sin(2\pi  \frac{m}{N_k}v)s_{sm,k}^{(j)}(2\pi v)-s_{cn,k}^{(j)}(2 \pi v)\\
&=& \sum_{m=-M_0}^{M_0} \left(\cos(2\pi \frac{m+n}{N_k}v) + \cos(2\pi \frac{n-m}{N_k}v) \right)s_{cn,k}^{(j)}(2\pi v)\\
& &+\sum_{m=-M_0}^{M_0}\left( \sin(2\pi \frac{n+m}{N_k}v)+\sin(2\pi \frac{m-n}{N_k}v)\right)s_{sn,k}^{(j)}(2\pi v)-s_{cn,k}^{(j)}(2 \pi v),
 \end{eqnarray*}
 since 
 \begin{equation}\label{eqn:sum}
 \frac{\sum_{n=0}^{N^h-1} \dot{\kappa}^{(j)}_{cn,1}(2\pi  t^h_n) \left(D^{12}_h(m,n)\right)}{D^1_h(m)}=O(\epsilon),
 \end{equation}
 which is due to the fact that the oscillation in the smooth amplitudes ($\cos(2\pi \frac{\pm n\pm m}{N_k}v)$ and $ \sin(2\pi \frac{\pm n\pm m}{N_k}v)$) can cancel out the summation in \ref{eqn:sum}. 
 
The inequalities in \eqref{eqn:inq1} implies that
 \begin{eqnarray}\label{eqn:f0}
 \|s^{P,(j)}_{cn,1}\|_{L^2}&\leq &O(\epsilon) +\left( \sum_{n=0}^{N^h-1}\left(\bar{\kappa}^{(j)}_{cn,1}(2\pi  t^h_n)  \right)^2h \right) ^{1/2}\left( \sum_{m=0}^{N^h-1}\left(\sum_{n=0}^{N^h-1}\left(  \frac{ D^{12}_h(m,n)-\gamma}{D^1_h(m)} \right)^2\right)  \right)^{1/2}\nonumber\\
 &&+ \gamma \frac{\sum_{n=0}^{N^h-1} \bar{\kappa}^{(j)}_{cn,1}(2\pi  t^h_n)}{D^1_h(m)}
 \end{eqnarray}
by the triangle inequality and H{\"o}lder's inequality. Note that
\begin{eqnarray*}
\left( \sum_{n=0}^{N^h-1}\left(\bar{\kappa}^{(j)}_{cn,k}(2\pi t^h_n)  \right)^2h\right)^{1/2}&\leq & \sum_{\ell\neq k}\sum_{m=-M_0}^{M_0}\left( \sum_{n=0}^{N^h-1}\left(s^{(j)}_{cm,\ell} (2\pi t^h_n)  \right)^2h\right)^{1/2}\\
&& +  \sum_{\ell\neq k}\sum_{m=-M_0}^{M_0} \left( \sum_{n=0}^{N^h-1}\left(s^{(j)}_{sm,\ell} (2\pi t^h_n)  \right)^2h\right)^{1/2}
\end{eqnarray*}
by the triangle inequality. Since all shape functions are in $\LL$, we have
 \[
 \left( \sum_{n=0}^{N^h-1}\left(s^{(j)}_{cm,\ell}(2\pi t^h_n) \right)^2h \right) ^{1/2}=\|s^{(j)}_{cm,\ell}\|_{L^2}+O(\epsilon),
 \]
 and
  \[
 \left( \sum_{n=0}^{N^h-1}\left(s^{(j)}_{sm,\ell}(2\pi t^h_n) \right)^2h \right) ^{1/2}=\|s^{(j)}_{sm,\ell}\|_{L^2}+O(\epsilon)
 \]
 for all $\ell$ and $m$. Hence,
 \begin{eqnarray}\label{eqn:f1}
&&\left( \sum_{n=0}^{N^h-1}\left(\bar{\kappa}^{(j)}_{cn,k}(2\pi t^h_n)  \right)^2h\right)^{1/2}\nonumber\\
&\leq & O(\epsilon) + (2M_0+1)(K-1)  \underset{1\leq k\leq K,-M_0\leq m\leq M_0}{\max}\left( \|s^{(j)}_{cm,k}\|_{L^2}+  \|s^{(j)}_{sm,k}\|_{L^2} \right).
 \end{eqnarray}
Since phase functions are in $\WD(M,N,h,\beta,\gamma)$, 
 \begin{equation}\label{eqn:f2}
 \left( \sum_{m=0}^{N^h-1}\left(\sum_{n=0}^{N^h-1}\left(  \frac{ D^{ki}_h(m,n)-\gamma}{D^k_h(m)} \right)^2\right)  \right)^{1/2}\leq \beta<1.
 \end{equation}
 Hence, by \eqref{eqn:f0}, \eqref{eqn:f1}, and \eqref{eqn:f2}, it holds that
  \begin{eqnarray}\label{eqn:f4}
 \|s^{P,(j)}_{cn,1}\|_{L^2}&\leq &O(\epsilon) + \gamma \frac{\sum_{n=0}^{N^h-1} \bar{\kappa}^{(j)}_{cn,1}(2\pi  t^h_n)}{D^1_h(m)}\\
 && + \beta  (2M_0+1)(K-1)  \underset{1\leq k\leq K,-M_0\leq m\leq M_0}{\max}\left( \|s^{(j)}_{cm,k}\|_{L^2}+  \|s^{(j)}_{sm,k}\|_{L^2} \right).\nonumber
 \end{eqnarray}
 Note that $D^k_h(m)\geq N^h\gamma=\gamma/h$. Hence, $\frac{\gamma}{D^k_h(m)}\leq h$ and
 \begin{equation}\label{eqn:f5}
  \gamma \frac{\sum_{n=0}^{N^h-1} \bar{\kappa}^{(j)}_{cn,1}( 2\pi t^h_n)}{D^1_h(m)}\leq  h\sum_{n=0}^{N^h-1} \bar{\kappa}^{(j)}_{cn,1}( 2\pi t^h_n).
 \end{equation}
 Since $s^{(j)}_{cn,k}\in\LL$, $s^{(j)}_{sn,k}\in\LL$,  $\int_0^1 s^{(j)}_{cn,k}(2\pi t)dt=0$, and $\int_0^1 s^{(j)}_{sn,k}(2\pi t)dt=0$  for all $n$ and $k$, by the stationary phase approximation, we have 
 \begin{equation}\label{eqn:f6}
  h\sum_{n=0}^{N^h-1} \bar{\kappa}^{(j)}_{cn,1}( 2\pi t^h_n)\leq O(\epsilon),
 \end{equation}
 as long as the number of grid points, $L$, and the lower bound of the oscillation frequency, $N$, are sufficiently large. In sum, by \eqref{eqn:f4}, \eqref{eqn:f5}, and \eqref{eqn:f6}, we have
   \begin{eqnarray}\label{eqn:f7}
 \|s^{P,(j)}_{cn,1}\|_{L^2}&\leq &O(\epsilon) \\
 && + \beta  (2M_0+1)(K-1)  \underset{1\leq k\leq K,-M_0\leq m\leq M_0}{\max}\left( \|s^{(j)}_{cm,k}\|_{L^2}+  \|s^{(j)}_{sm,k}\|_{L^2} \right).\nonumber
 \end{eqnarray}
 
In fact, following the arguments for \eqref{eqn:f7}, we can bound $ \|s^{P,(j)}_{cn,k}\|_{L^2}$ and $ \|s^{P,(j)}_{sn,k}\|_{L^2}$ for all $k$ with the same bound as in \eqref{eqn:f7}.
 
 By the conditions just listed immediately before Theorem \ref{thm:conv3}, and the definition in Equation \eqref{eqn:regthm12}, we have 
  \begin{eqnarray}\label{eqn:sest12}
 && \|s^{E,(j)}_{cn,k}\|_{L^2}\leq O(\epsilon)+  \|s^{P,(j)}_{cn,k}\|_{L^2} \leq \\
  && O(\epsilon)+ \beta  (2M_0+1)(K-1)  \underset{1\leq k\leq K,-M_0\leq m\leq M_0}{\max}\left( \|s^{(j)}_{cm,k}\|_{L^2}+  \|s^{(j)}_{sm,k}\|_{L^2} \right),\nonumber
 \end{eqnarray}
 and
  \begin{eqnarray}\label{eqn:sest22}
 && \|s^{E,(j)}_{sn,k}\|_{L^2}\leq O(\epsilon)+  \|s^{P,(j)}_{sn,k}\|_{L^2} \leq \\
  && O(\epsilon)+ \beta  (2M_0+1)(K-1)  \underset{1\leq k\leq K,-M_0\leq m\leq M_0}{\max}\left( \|s^{(j)}_{cm,k}\|_{L^2}+  \|s^{(j)}_{sm,k}\|_{L^2} \right).\nonumber
 \end{eqnarray}
 By Equation \eqref{eqn:sest12}, \eqref{eqn:sest22}, and mathematical induction, it is easy to show that
  \[
  \|s^{E,(j)}_{cn,k}\|_{L^2}\leq O(c_0 \epsilon+( \beta  (2M_0+1)(K-1))^j ),
 \]
 and 
  \[
  \|s^{E,(j)}_{sn,k}\|_{L^2}\leq O(c_0 \epsilon+(\beta  (2M_0+1)(K-1))^j )
 \]
for all $j\geq 0$ and $1\leq k\leq K$, where $c_0=\frac{1}{1-\beta  (2M_0+1)(K-1)}$ is a constant number coming from the geometric sequence due to the summation of the $O(\epsilon)$ term for all $j\geq 0$. 
 
The above proof is just for $K=2$. To care the general case for $K>2$, the only difference is the presence of more terms in the inequality estimations. 
\end{proof}

Theorem \ref{thm:conv3} shows that the regression function in each iteration of Line $4$ in Algorithm \ref{alg:MMD} decays, if $J_2=1$ and $\beta  (2M_0+1)(K-1)<1$, in the $L^2$ sense up to a fixed accuracy parameter as the iteration number becomes large. Hence, the recovered shape function converges and the residual decays up to a fixed accuracy parameter. For a general case when $J_2>1$, the convergence of Algorithm \ref{alg:MMD} is obvious following Theorem \ref{thm:conv3}. It is tedious to compare the convergence rate for different $J_2$, since it depends on how fast the multiresolution expansion coefficients $a_{cn,k}$, $a_{sn,k}$, $b_{cn,k}$, and $b_{sn,k}$ decay in $n$. In general, the faster the coefficients decay in $n$, the larger $J_2$ should be used. Another immediate result of Theorem \ref{thm:conv3} is that, the correlation of MIMF's with well-differentiated phase functions is asymptotically zero in the sense of recursive diffeomorphism-based regression. In other words, when the MIMF and MMD model is applied to analyze a time series, the resulting representation by MIMF's is unique.  The robustness of Algorithm \ref{thm:conv3} is an immediate result following the proof of Theorem  \ref{thm:ns} and \ref{thm:conv2}.

\section{Numerical Example}
\label{sec:NumEx}

In this section, some numerical examples of synthetic and real data are provided to support the multiresolution mode decomposition (MMD) model and Algorithm \ref{alg:MMD}. We apply the least squares spline regression method with free knots in \cite{Regression} to solve all the regression problems in this paper. The implementation of the regression method is available online\footnote{Available at https://www.mathworks.com/matlabcentral/fileexchange/25872-free-knot-spline-approximation.}. In all synthetic examples, we assume the fundamental instantaneous phases are known and only focus on verifying the theory in Section \ref{sec:MMD}. In real examples, we apply the one-dimensional synchrosqueezed wave packet transform (SSWPT) to estimate instantaneous phases as inputs of the multiresolution mode decomposition in Algorithm \ref{alg:MMD}. The implementation of the SSWPT is available in SynLab\footnote{Available at https://github.com/HaizhaoYang/SynLab.}, {while the code for the MMD is available online in a MATLAB package named DeCom}\footnote{Available at https://github.com/HaizhaoYang/DeCom.}.

Before presenting results, we would like to summarize the main parameters in the above packages and in Algorithm \ref{alg:MMD}. In the spline regression with free knots, main parameters are
\begin{itemize}
\item $nk$:  the number of free knots;
\item $krf$: the knot removal factor, a number quantifying how likely a free knot would be removed;
\item $ord$: the highest degree of spline polynomials.
\end{itemize}
In SynLab, main parameters are
\begin{itemize}
\item $s$: a geometric scaling parameter;
\item $rad$: the support size of the mother wave packet in the Fourier domain;
\item $red$: a redundancy parameter, the number of frames in the wave packet transform;
\item $\epsilon_{sst}$: a threshold for the wave packet coefficients.
\end{itemize}
In Algorithm \ref{alg:MMD}, main parameters are
\begin{itemize}
\item $J_1$: the maximum number of iterations allowed in Algorithm \ref{alg:MMD};
\item $J_2$: the maximum number of iterations allowed in Algorithm \ref{alg:RDBR2};
\item $M_0$: the bandwidth parameter;
\item $\epsilon_1=\epsilon_2=\epsilon$: the accuracy parameter.
\end{itemize}
For the purpose of convenience, the synthetic data is defined in $[0,1]$ and sampled on a uniform grid. All these parameters in different examples are summarized in Table \ref{tab:1}.

\begin{table}[htp]
\centering
\begin{tabular}{rcccccccccccc}
\toprule
  figure &  $nk$ & $krf$ & $ord$ & $s$
                             & $rad$ & $red$ & $\epsilon_{sst}$ & $J_1$ & $J_2$ & $M_0$ & $\epsilon$ & $L$ \\
\toprule
 7, 8, 9  & 20 & 1.0001 & 3 & 0.5 & 1.5 & 8 & 1e-3 & 200 & 10 & 20 & 1e-6 & $2^{16}$ \\
 10, 11, 12  & 20 & 1.0001 & 3 & 0.5 & 1 & 8 & 1e-3 & 200 & 10 & 20 & 1e-6 & $4000$ \\
14, 15  & 20 & 1.0001 & 3 & -- & -- & -- & -- & 200 & 10 & 10 & 1e-6 & $2^{15}$ \\
16, 17, 18  & 20 & 1.0001 & 3 & 0.5 & 1.5 & 8 & 1e-3 & 200 & 10 & 40 & 1e-6 & $2^{16}$ \\
\bottomrule
\end{tabular}
\caption{Parameters in the spline regression, SynLab, and Algorithm \ref{alg:MMD}. The notation ``--" means the corresponding parameter is not used.}
\label{tab:1}
\end{table}

In the noisy synthetic examples, Gaussian random noise with a distribution $\N(0,\sigma^2)$ is used. The signal-to-noise ration ($\SNR$) of $f(t)$ is defined as
\begin{equation}
\label{eqn:SNR}
\SNR [dB]=10\log_{10}\left(\frac{\|f\|_{L^2}}{\sigma^2}\right)
\end{equation}
where $\sigma^2$ is the variation of the noise.

\subsection{Synthetic multiresolution mode decomposition}

In this section, clean and noisy synthetic time series are provided to demonstrate the effectiveness and the robustness of the multiresolution mode decomposition in Algorithm \ref{alg:MMD}. 

{\bf The first synthetic example.} We consider a simple case when the signal has two MIMFs with ECG shape functions. In particular, {we generate MIMFs such that $s_{sn,k}=s_{cn,k}=s_k$ for all $n\geq 0$ and $k$, and $s_{sn,k}=s_{cn,k}=0$ for all $n<0$ and $k$, resulting in an example that can be either considered as a GMD or an MMD problem. Through this example we see that Algorithm \ref{alg:MMD} for MMD can also be applied to solve the GMD problem.} For example, we consider a signal of the form
\begin{equation}
\label{eqn:ex2}
f(t) = f_1(t)+f_2(t) +ns,
\end{equation}
where $ns$ denotes Gaussian random noise with a distribution $\mathcal{N}(0,\sigma^2)$, 
\begin{equation}\label{eq:f1_4}
f_1(t) = \alpha_1(\phi_1(t))s_1(300\pi \phi_1(t)),
\end{equation}
\begin{equation}\label{eq:f2_4}
f_2(t) = \alpha_2(\phi_2(t))s_2(440\pi\phi_2(t)),
\end{equation}
\[
\alpha_1(t) =  1+0.2\cos(2\pi t)+0.1\sin(2\pi t), 
\]
\[
\alpha_2(t) =  1+0.1\cos(2\pi t)+0.2\sin(2\pi t),
\]
\[
\phi_1(t) = t+0.006\sin(2\pi t),
\]
and 
\[
\phi_2(t) = t+0.006\cos(2\pi t).
\]
$s_1(2\pi t)$ and $s_2(2\pi t)$ are generalized shape functions defined in $[0,1]$ as shown in Figure \ref{fig:4_0}. In the noiseless example when $ns=0$, we apply the MMD in Algorithm \ref{alg:MMD} with the known instantaneous phases mentioned just above to estimate the multiresolution expansion coefficients and the shape functions series. Note that it is sufficient to show the estimation accuracy of the product of the multiresolution expansion coefficient and its corresponding shape function; the products of the first five leading expansion coefficients and shape functions are shown in Figure \ref{fig:4}. The estimation errors are very small; the estimated results and the ground truth are almost indistinguishable. The numerical results of a noisy signal with $ns=\mathcal{N}(0,2.25)$ are shown in Figure \ref{fig:4ns}. Even if the SNR for the leading terms $\mathcal{M}_0(f_1)(t)$ and $\mathcal{M}_0(f_2)(t)$ are near $-10$, the estimation $a_{0,1}s_{c0,1}(t)$ is almost equal to the ground truth $s_1(t)$, and the estimation $a_{0,2}s_{c0,2}(t)$ is almost equal to the ground truth $s_2(t)$. The SNR for the other terms are even much smaller (less than $-20$). However, as shown in Figure \ref{fig:4ns}, the estimation of other terms are still reasonable, capturing the main trend of the shape functions. Therefore, Algorithm \ref{alg:MMD} is quite robust against noise perturbation. 

\begin{figure}[ht!]
  \begin{center}
    \begin{tabular}{cc}
      \includegraphics[height=1in]{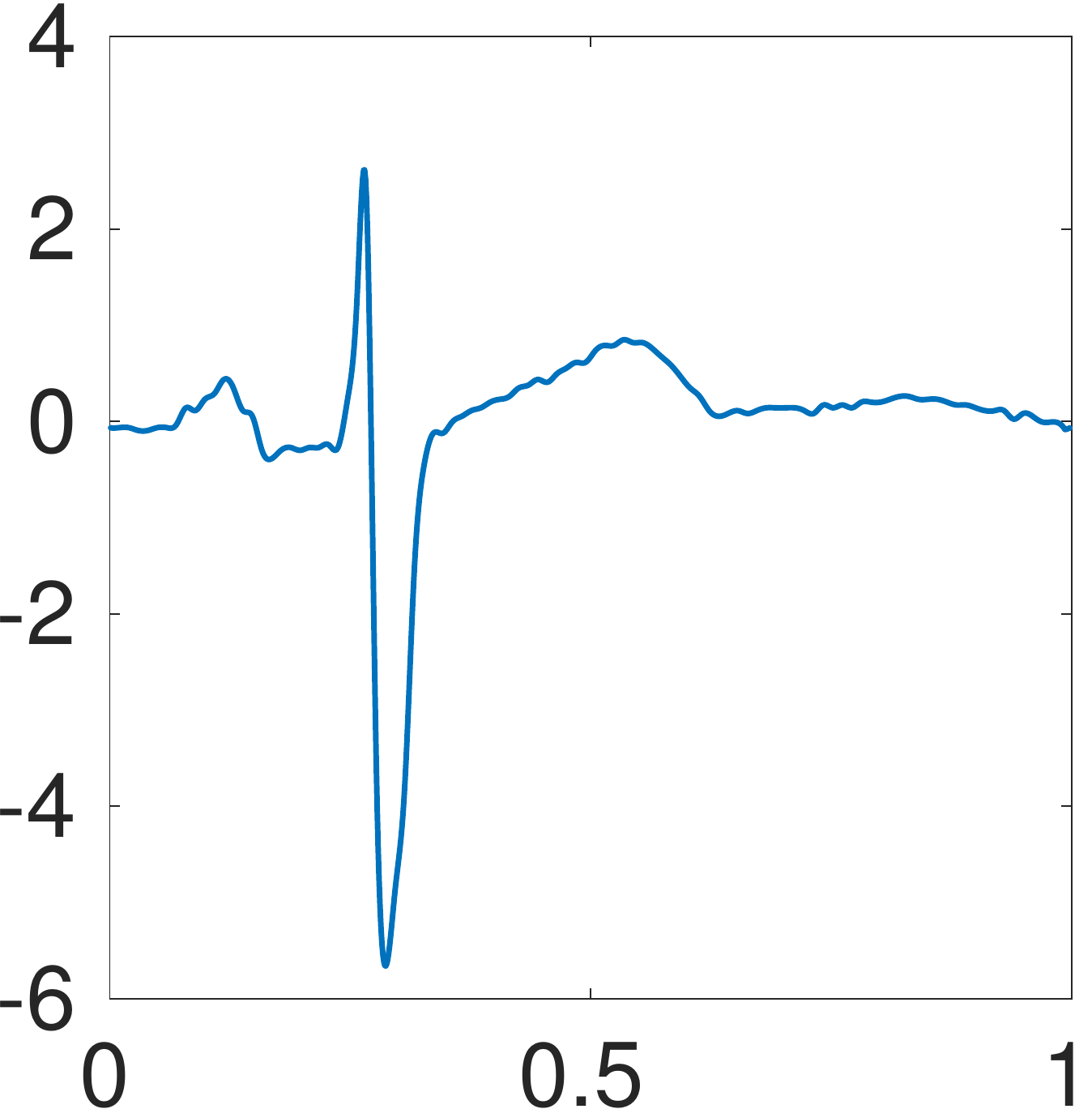}  &
   \includegraphics[height=1in]{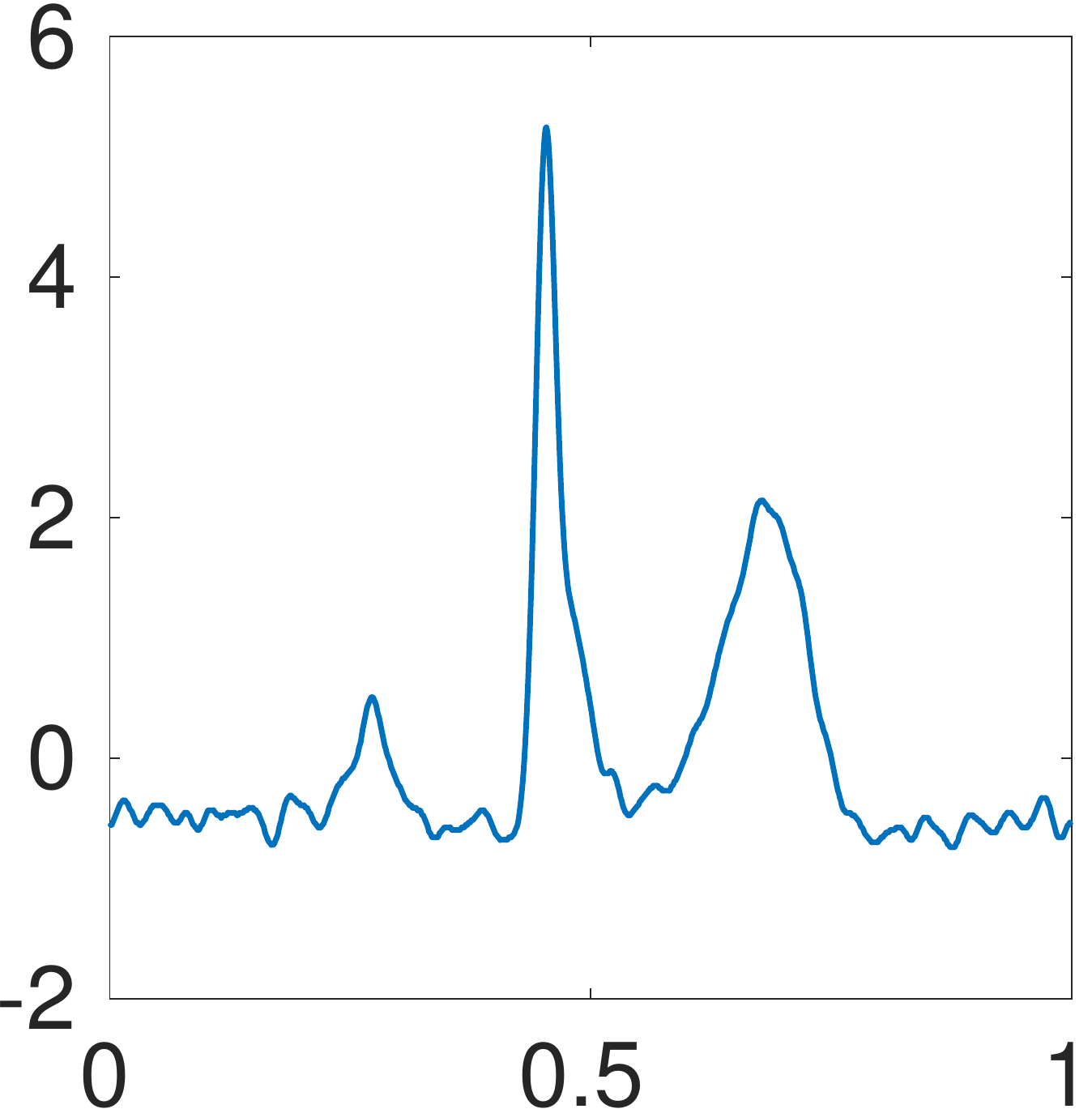}  
    \end{tabular}
  \end{center}
  \caption{Shape function $s_1(2\pi t)$ in \eqref{eq:f1_4} and $s_2(2\pi t)$ in \eqref{eq:f2_4}.}
\label{fig:4_0}
\end{figure}

\begin{figure}[ht!]
  \begin{center}
    \begin{tabular}{ccccc}
      \includegraphics[width=1in]{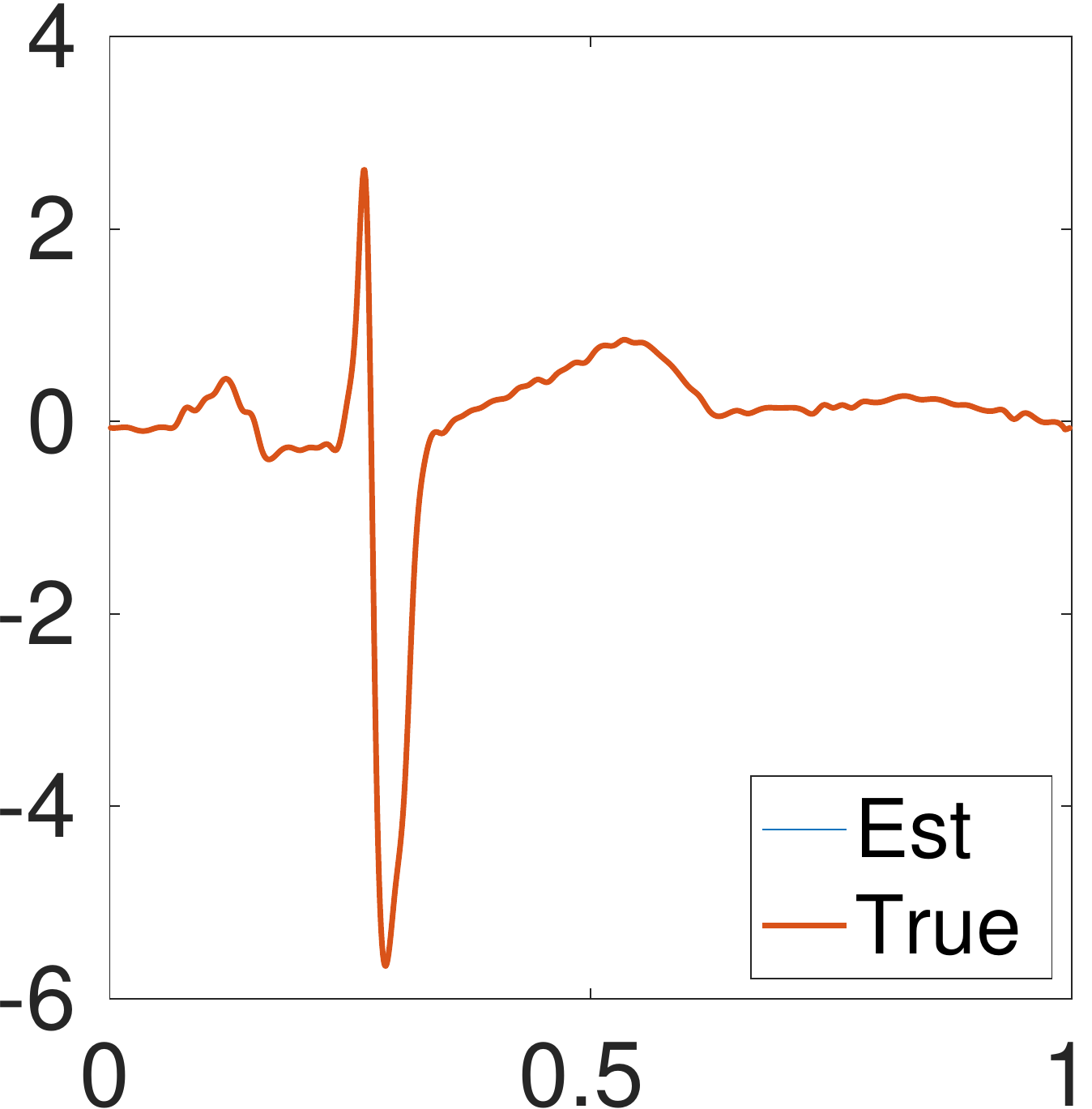}   &
      \includegraphics[width=1.1in]{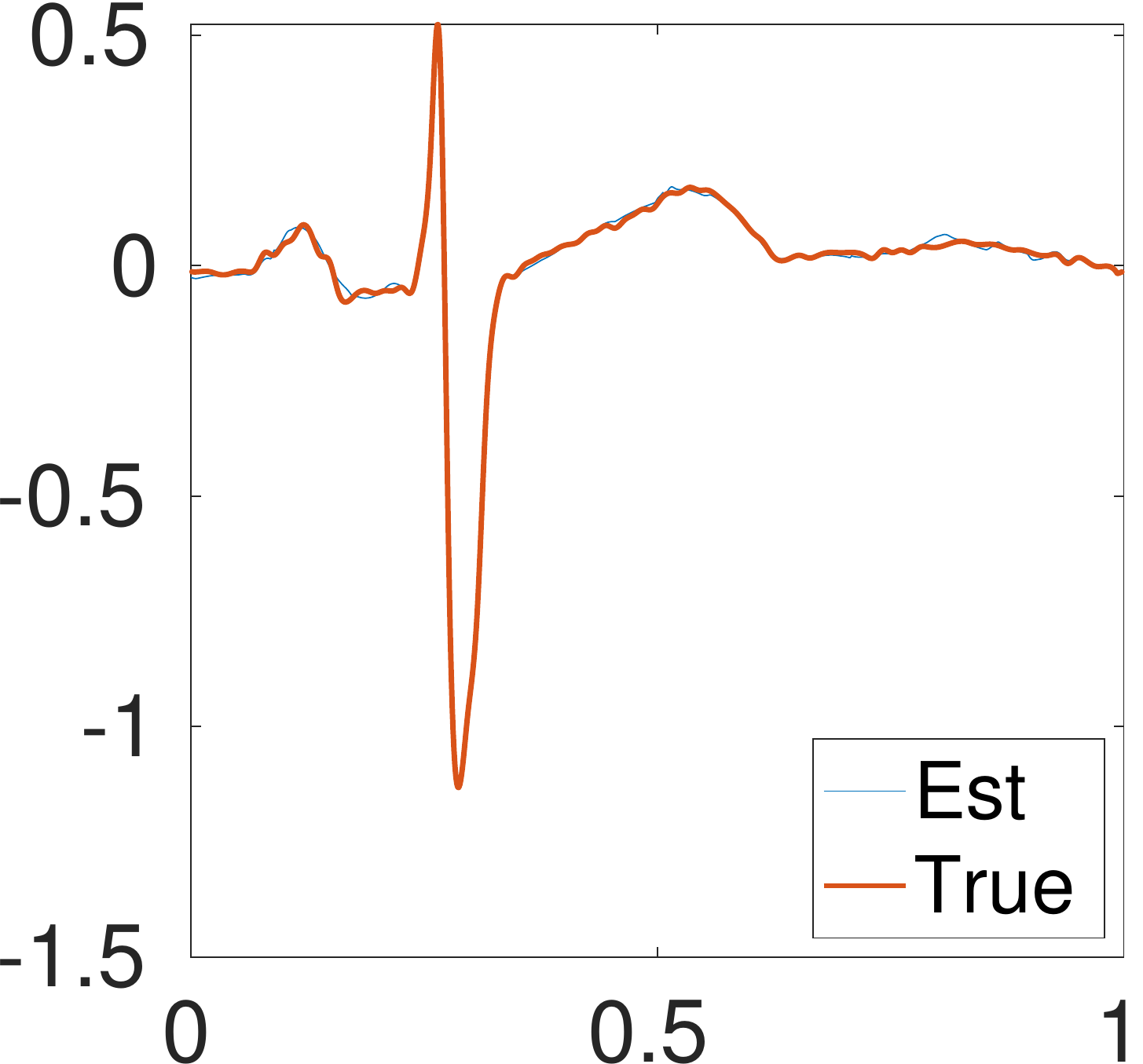}   &
      \includegraphics[width=1.15in]{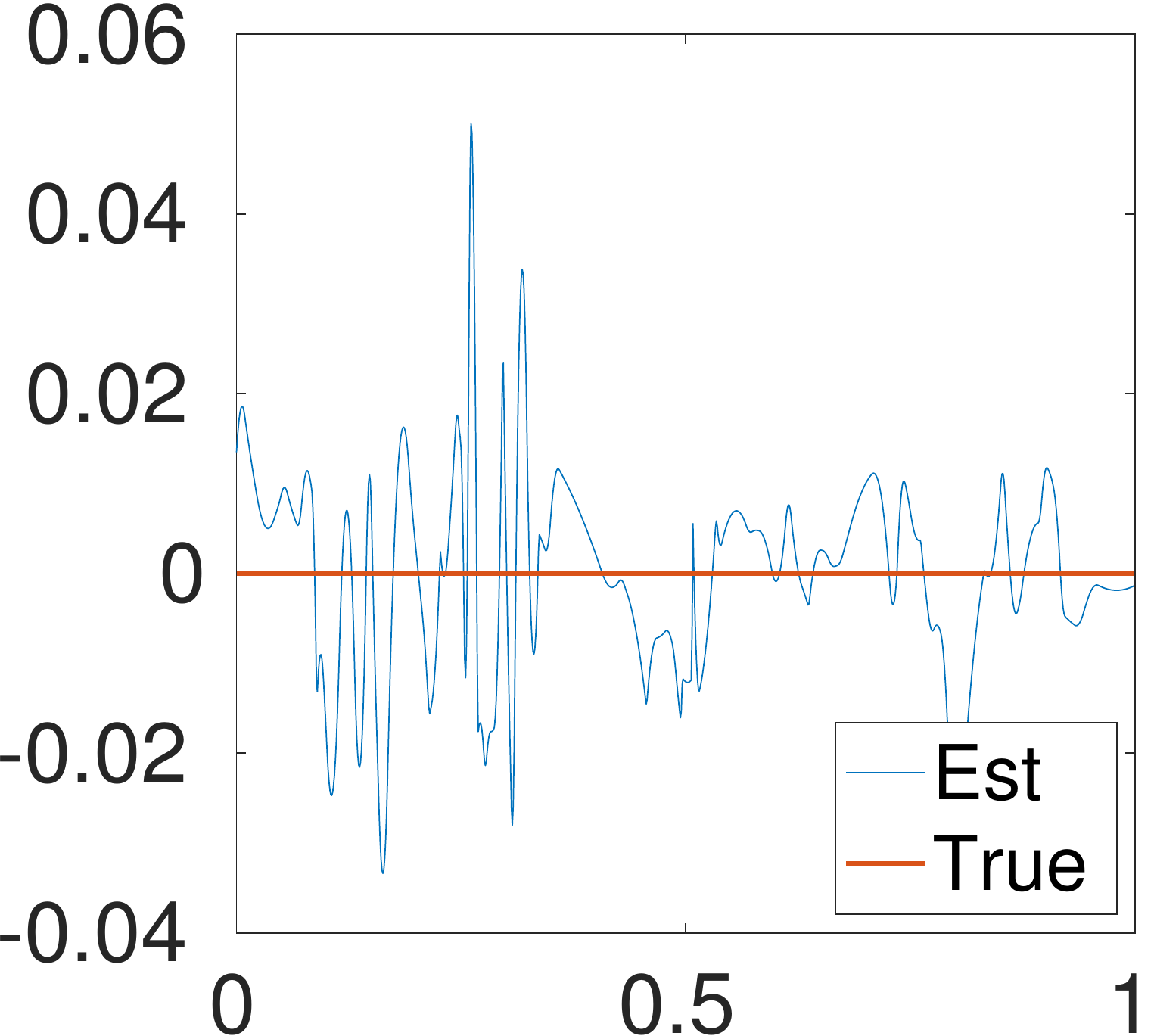}   &
      \includegraphics[width=1.1in]{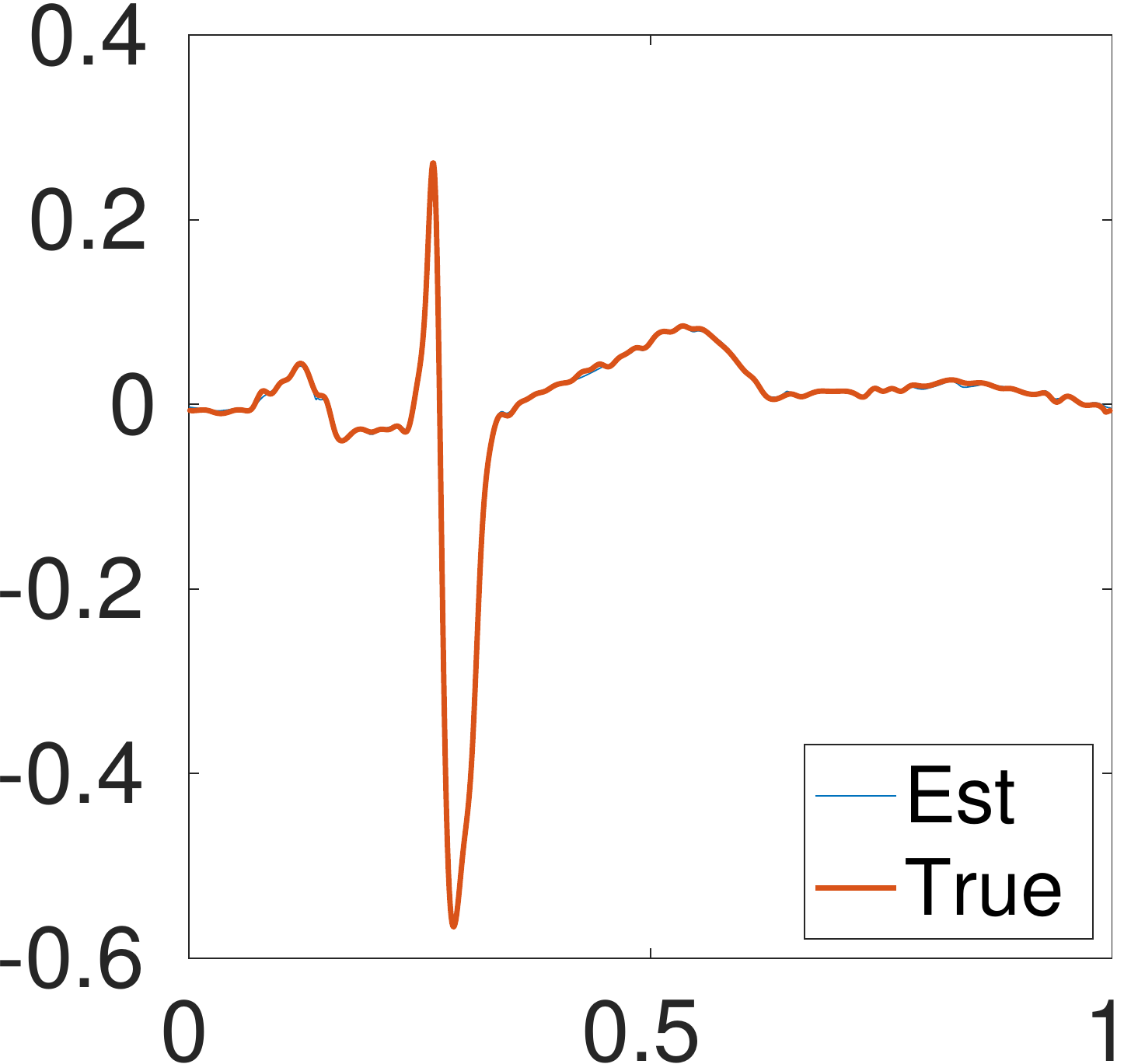}   &
      \includegraphics[width=1in]{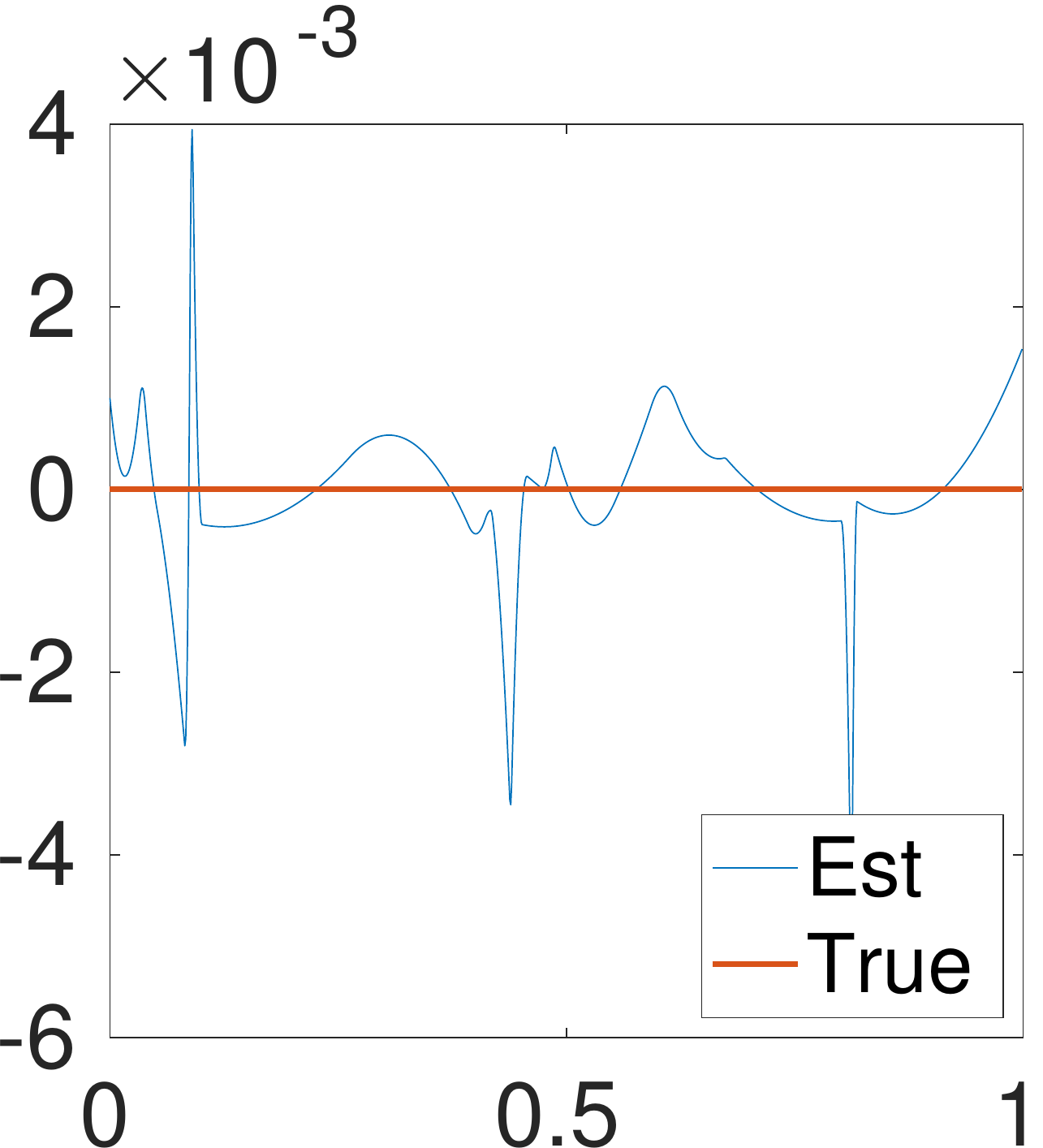}   \\
      \includegraphics[width=1in]{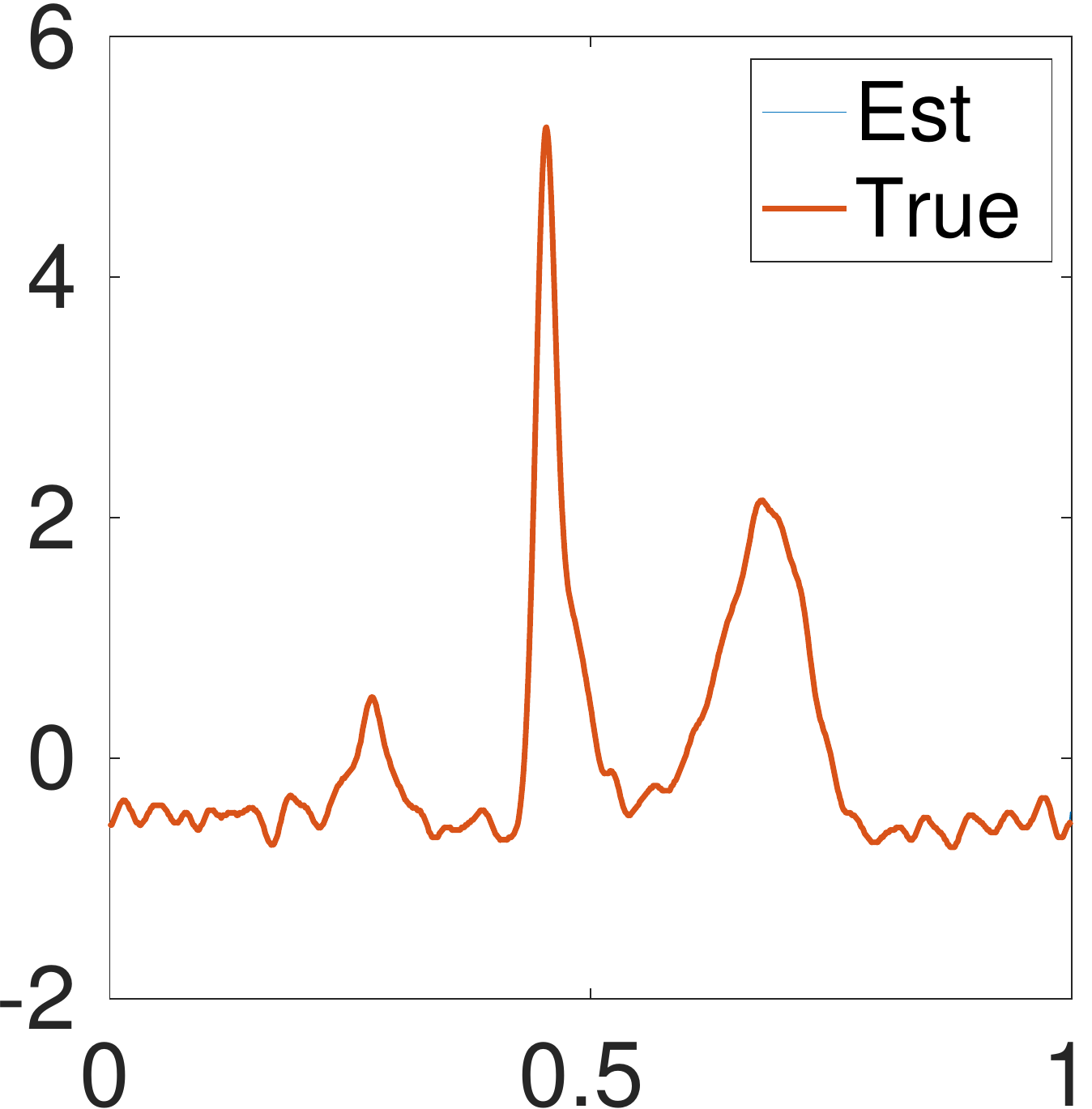}   &
      \includegraphics[width=1.1in]{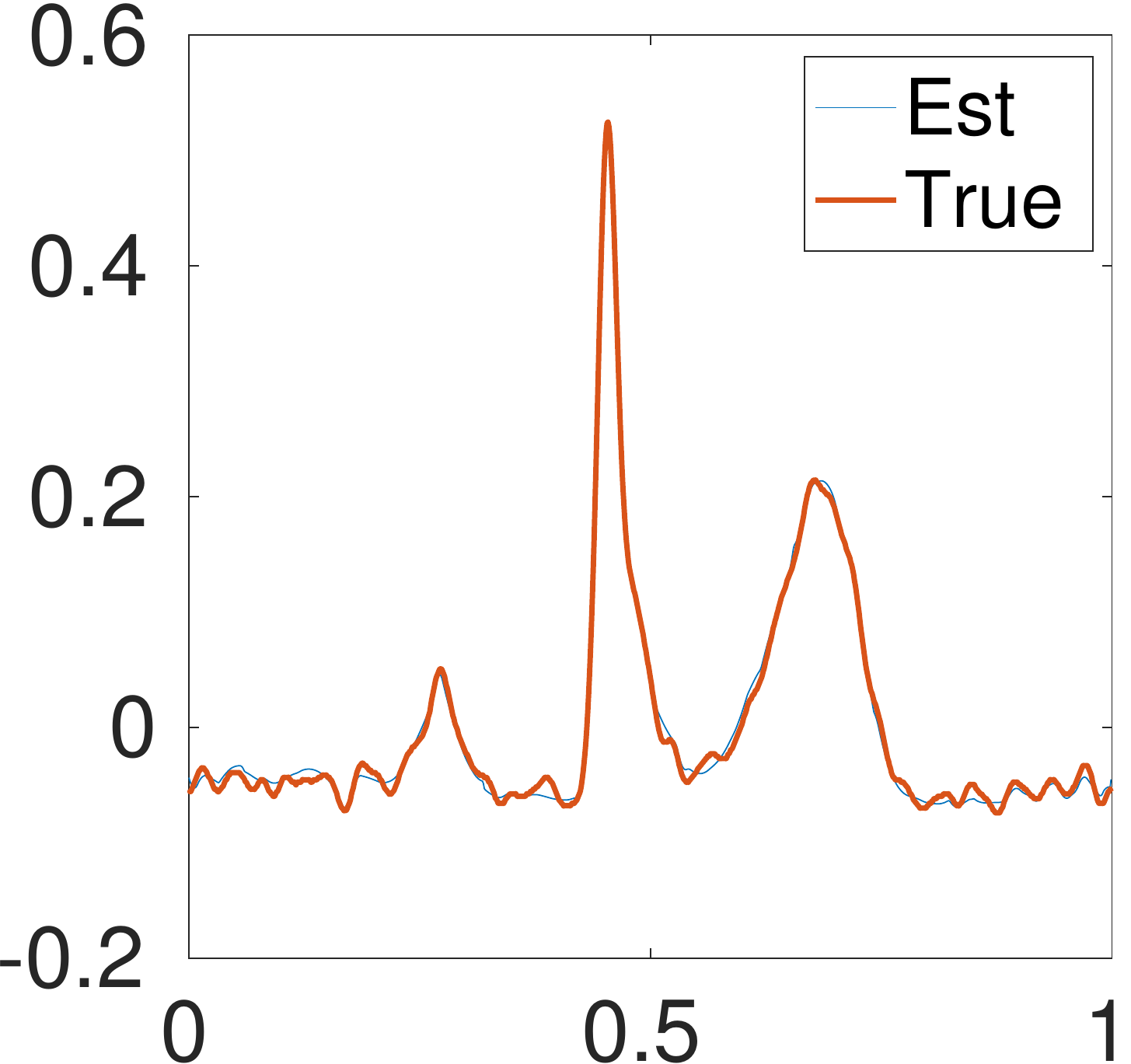}   &
      \includegraphics[width=1.15in]{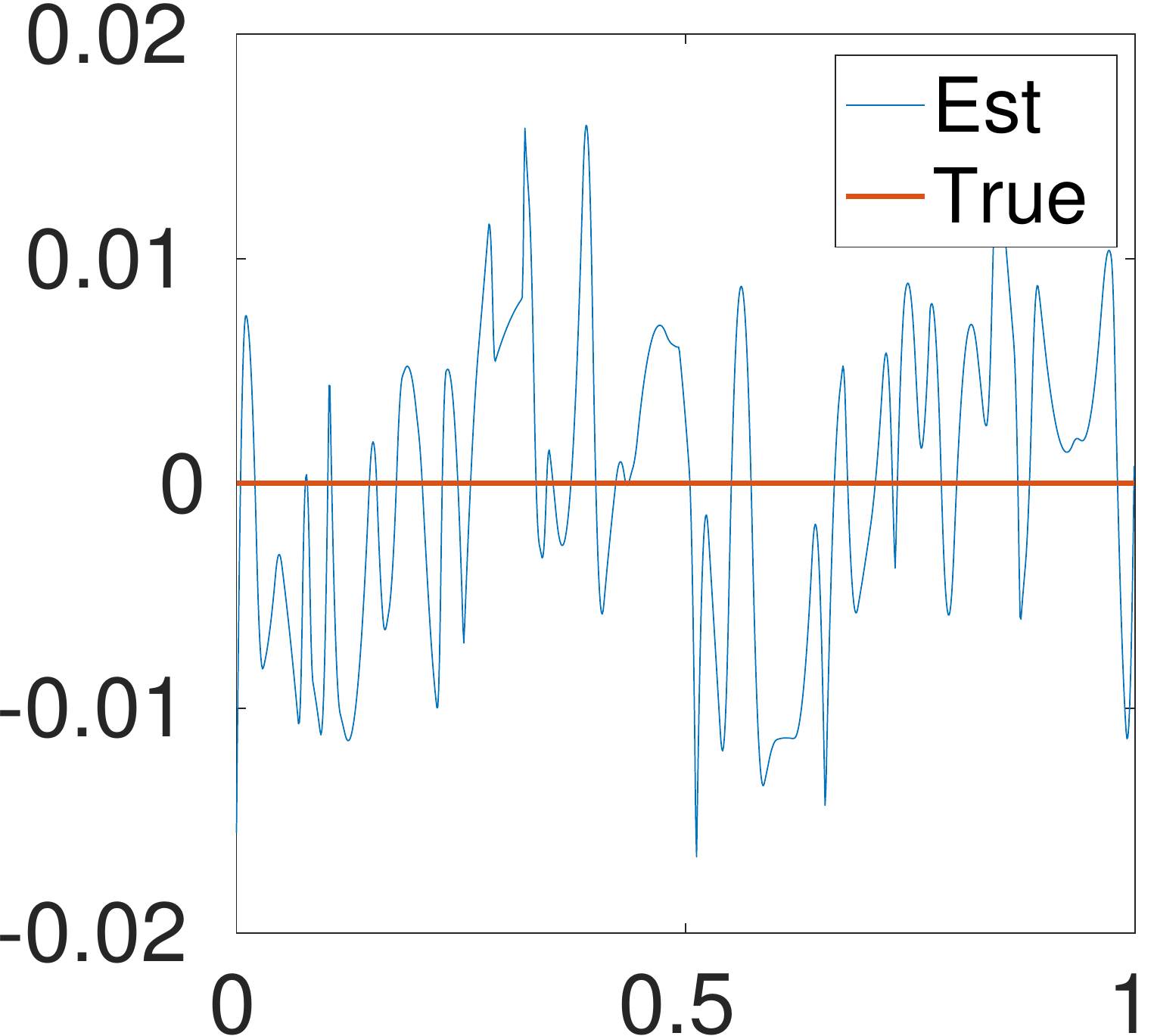}   &
      \includegraphics[width=1.05in]{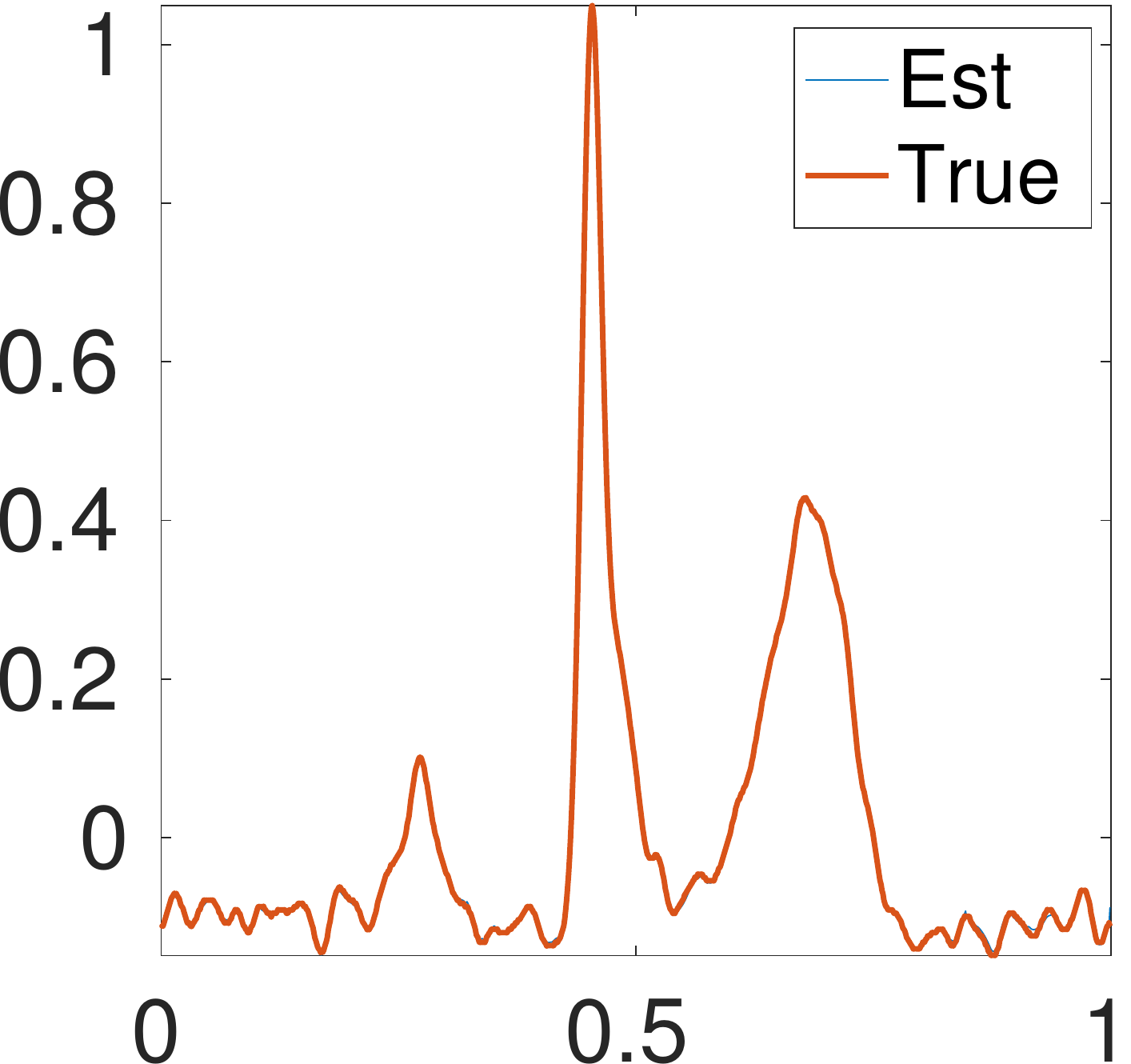}   &
      \includegraphics[width=1.15in]{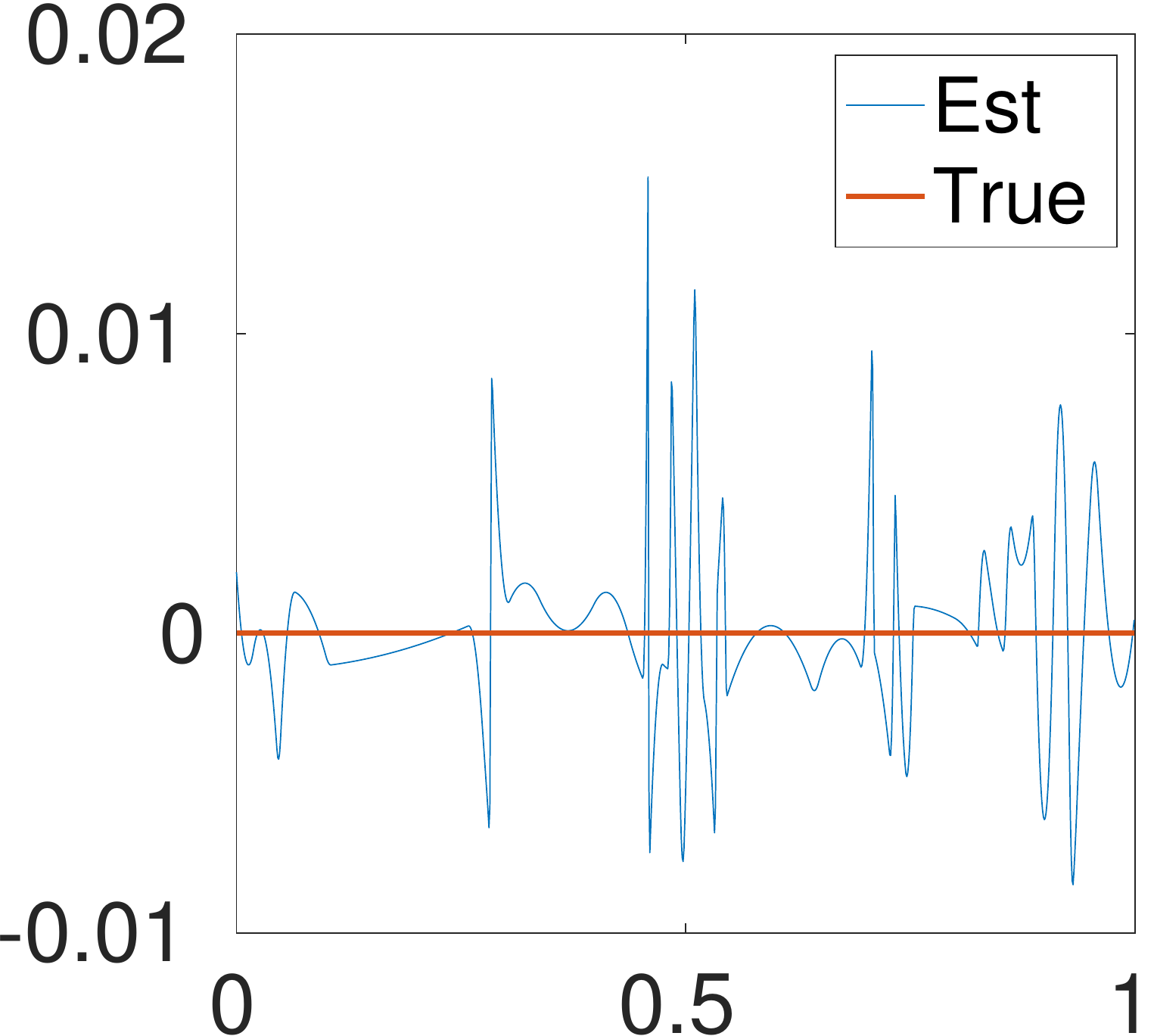}   
    \end{tabular}
  \end{center}
  \caption{Top row: estimated shape functions $a_{0,1}s_{c0,1}(2\pi t)$, $a_{1,1}s_{c1,1}(2\pi t)$, $a_{-1,1}s_{c-1,1}(2\pi t)$, $b_{1,1}s_{s1,1}(2\pi t)$, and $b_{-1,1}s_{s-1,1}(2\pi t)$ of $f_1(t)$ in \eqref{eq:f1_4}. Bottom row: estimated shape functions $a_{0,2}s_{c0,2}(2\pi t)$, $a_{1,2}s_{c1,2}(2\pi t)$, $a_{-1,2}s_{c-1,2}(2\pi t)$, $b_{1,2}s_{s1,2}(2\pi t)$, and $b_{-1,2}s_{s-1,2}(2\pi t)$ of $f_2(t)$ in \eqref{eq:f2_4}.}
\label{fig:4}
\end{figure}

\begin{figure}[ht!]
  \begin{center}
    \begin{tabular}{ccccc}
      \includegraphics[width=1.025in]{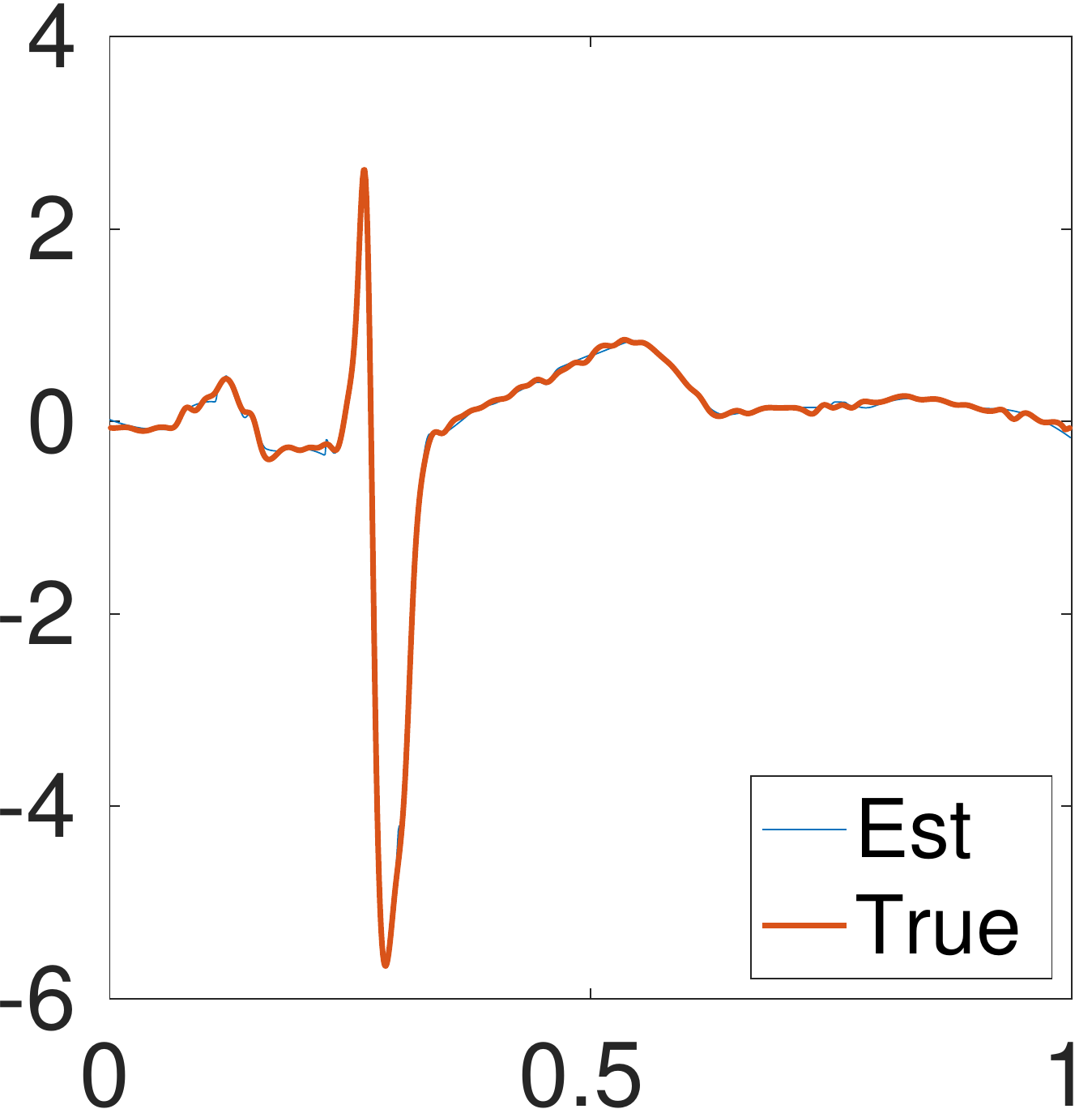}   &
      \includegraphics[width=1.1in]{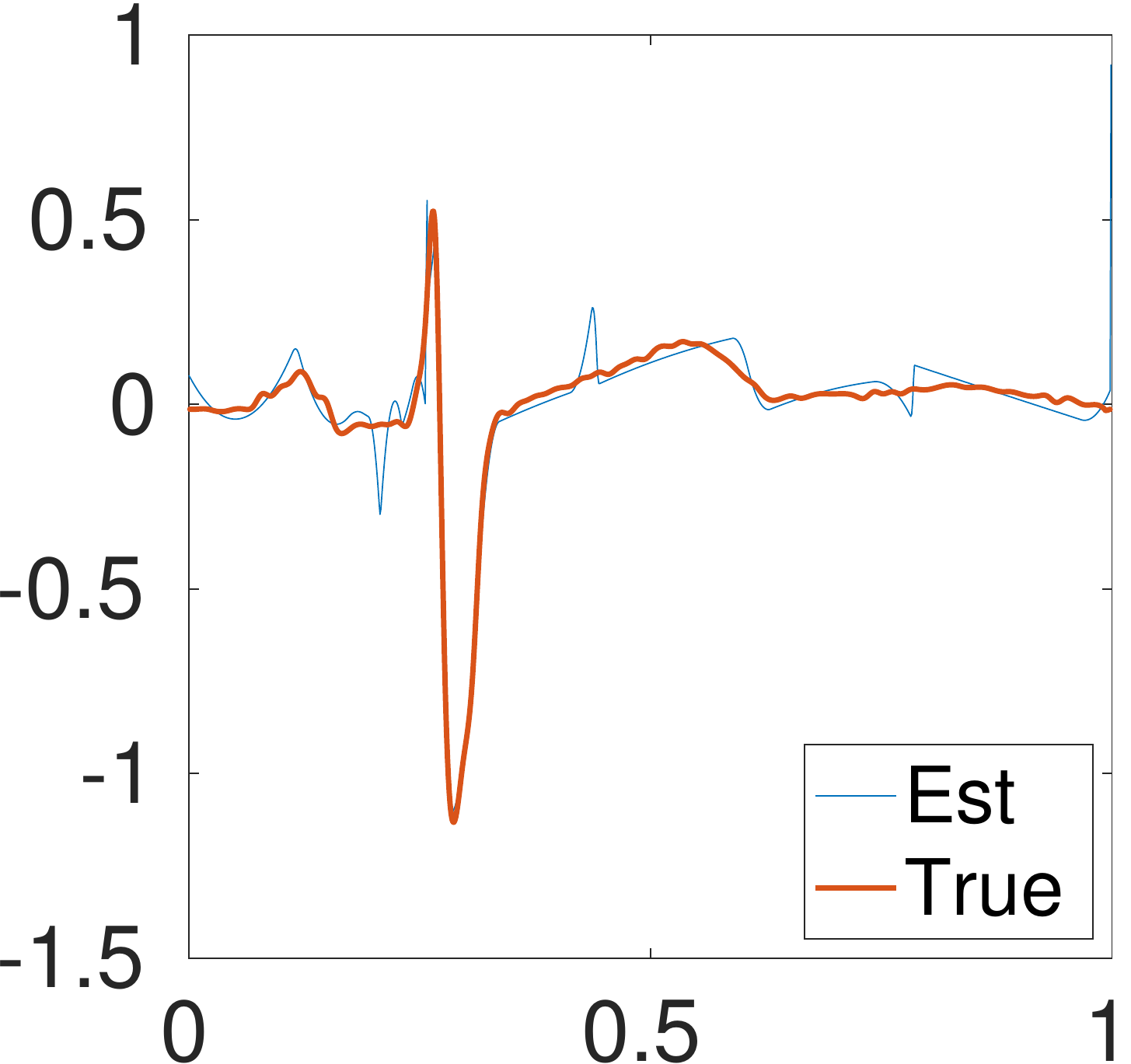}   &
      \includegraphics[width=1.1in]{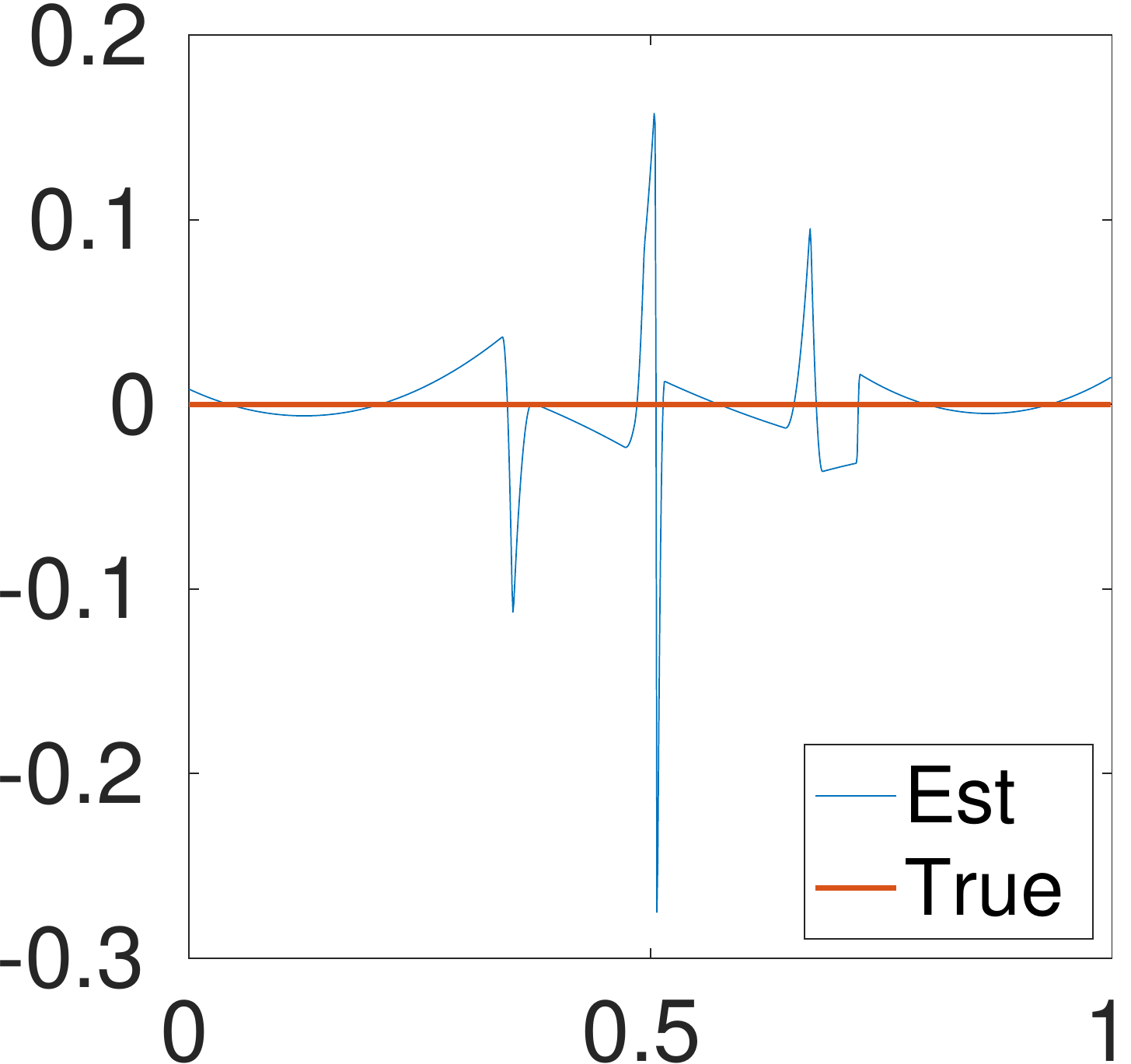}   &
      \includegraphics[width=1.1in]{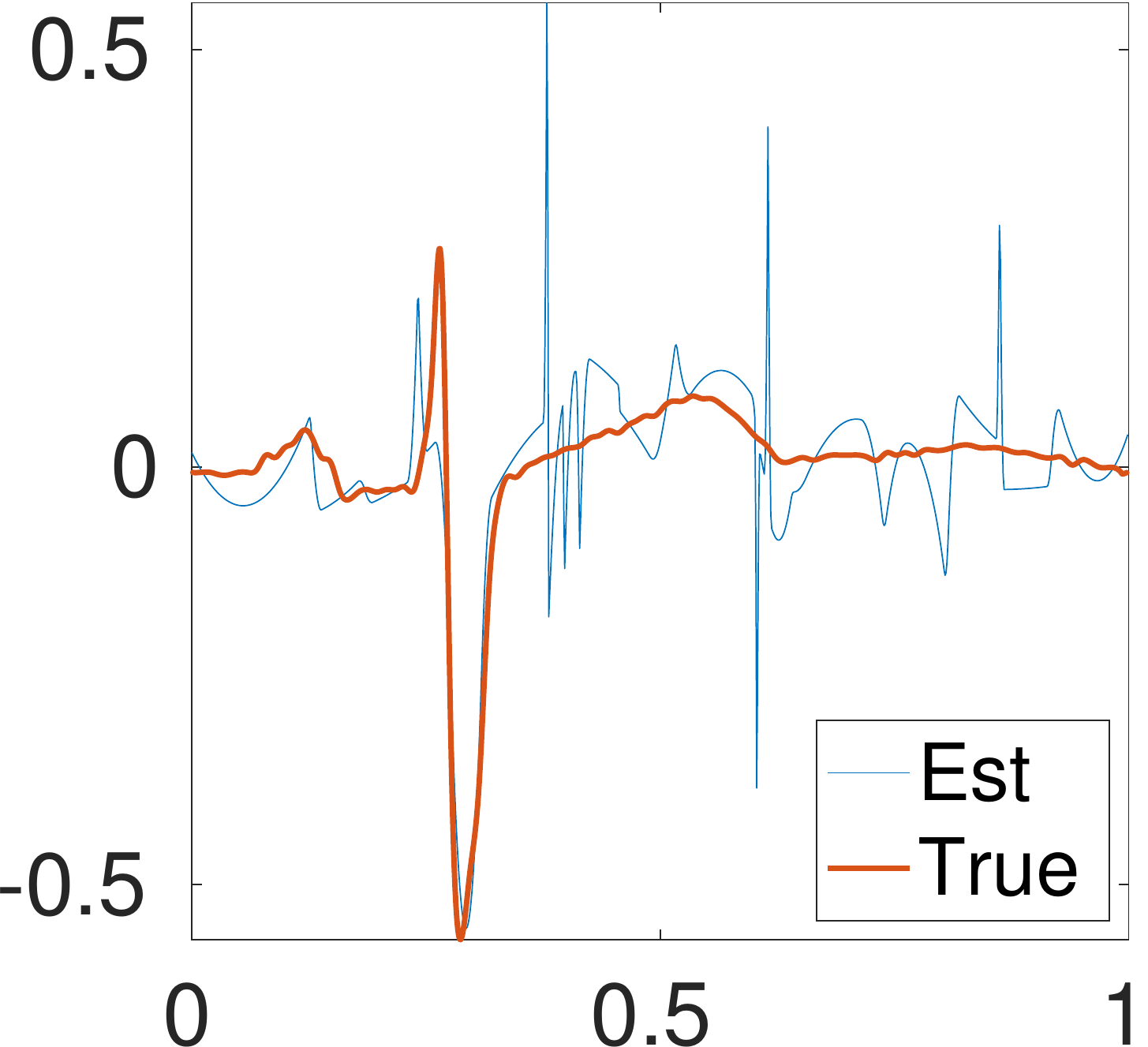}   &
      \includegraphics[width=1.15in]{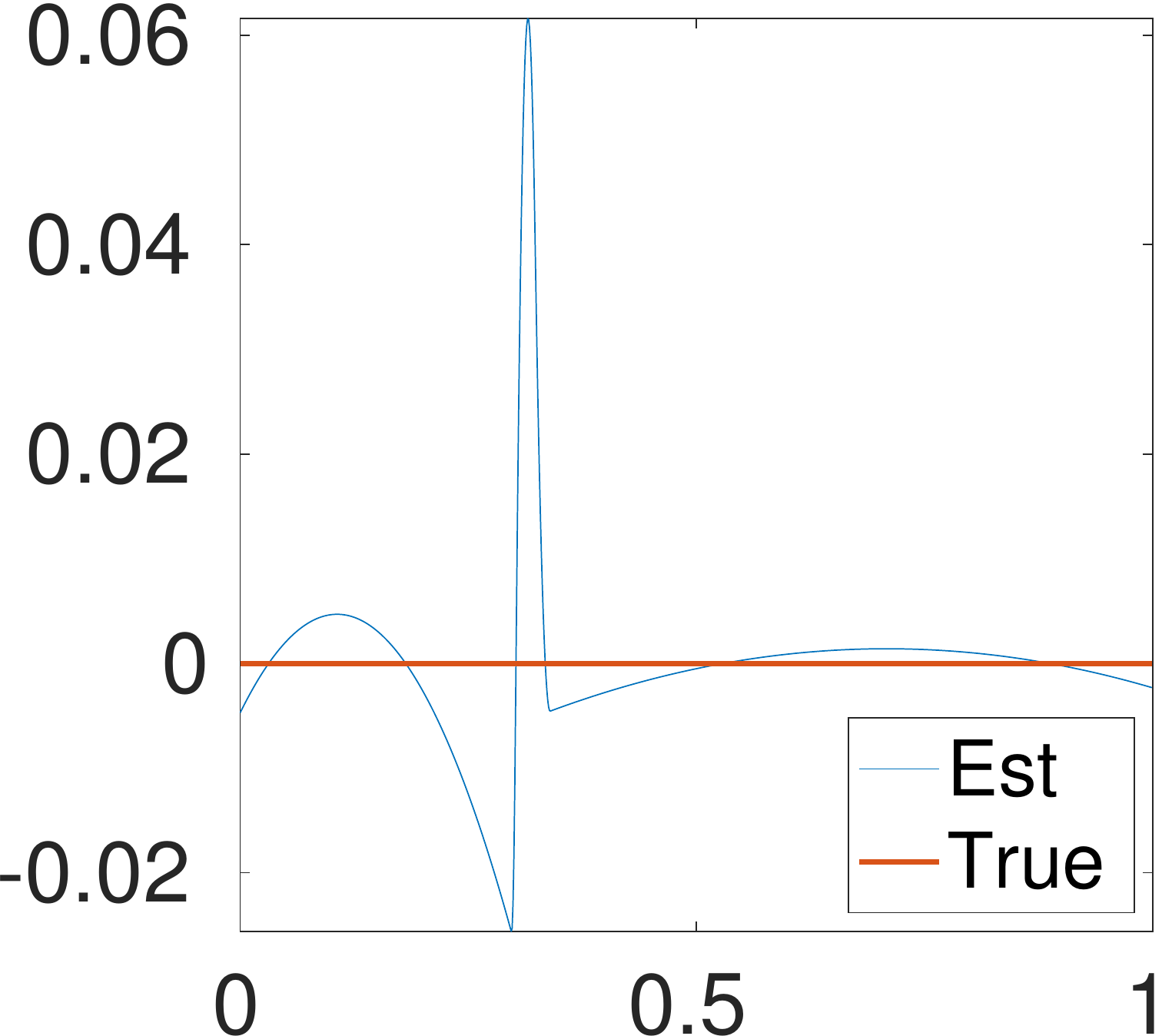}   \\
      \includegraphics[width=1in]{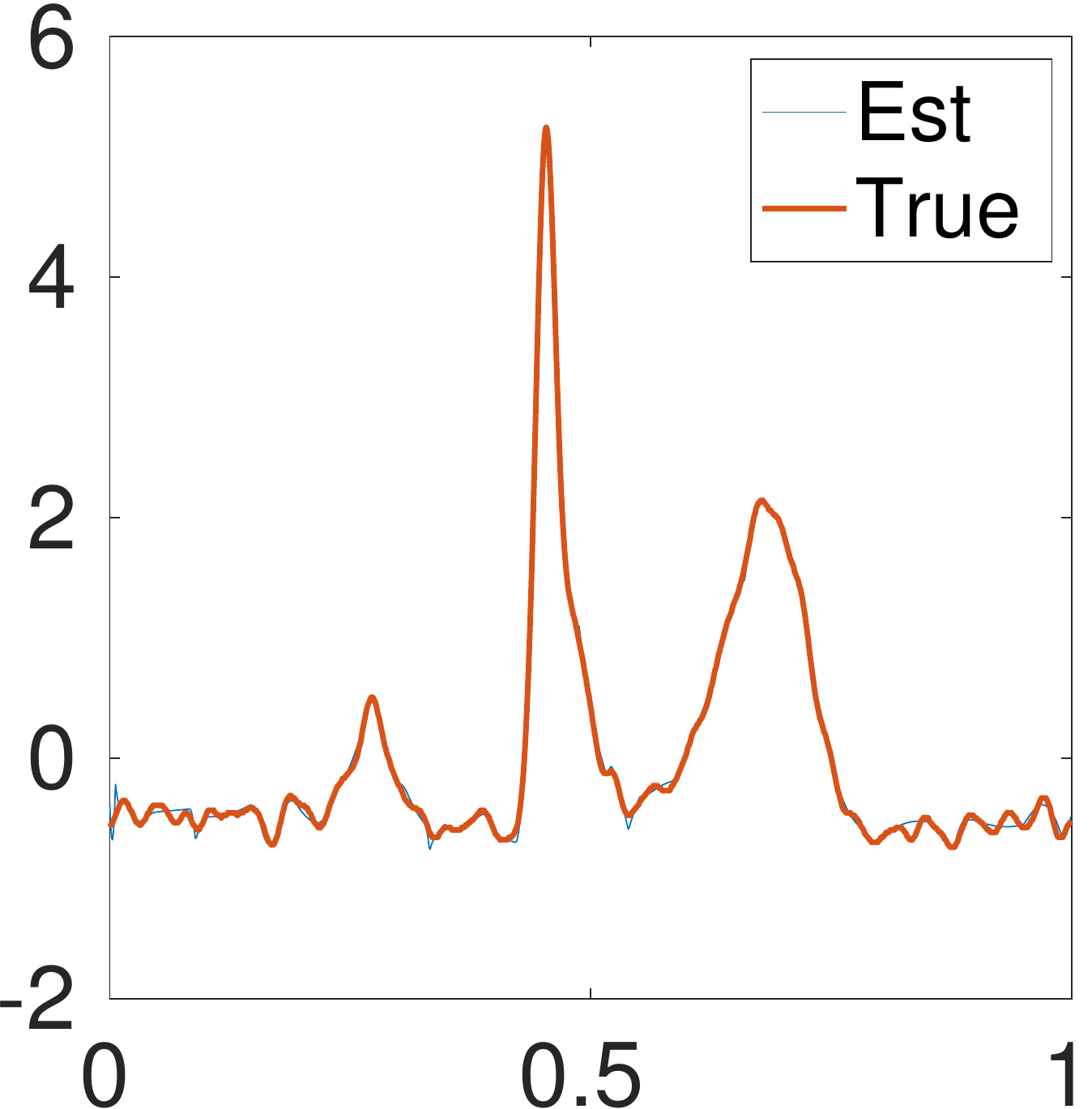}   &
      \includegraphics[width=1.1in]{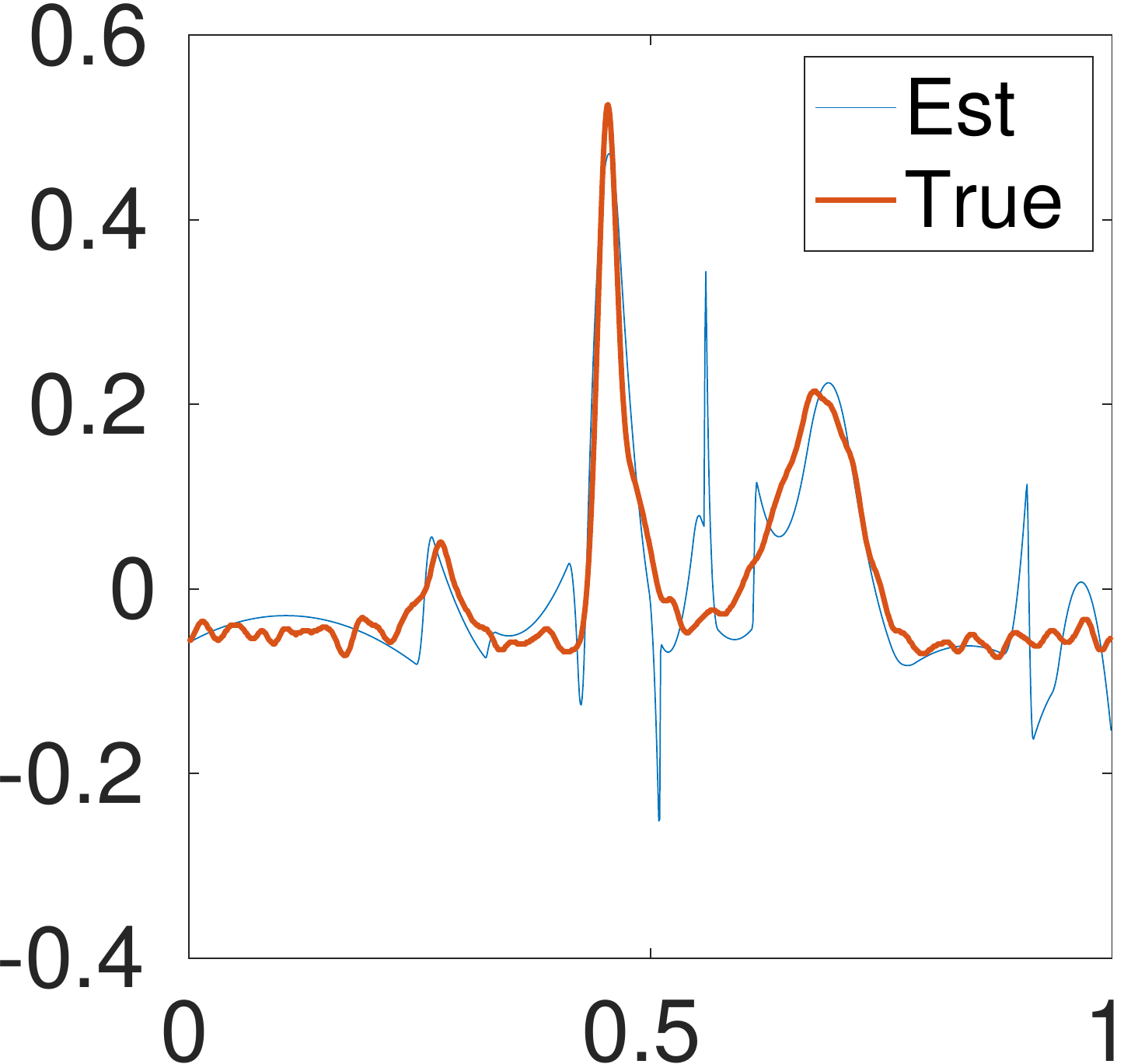}   &
      \includegraphics[width=1.1in]{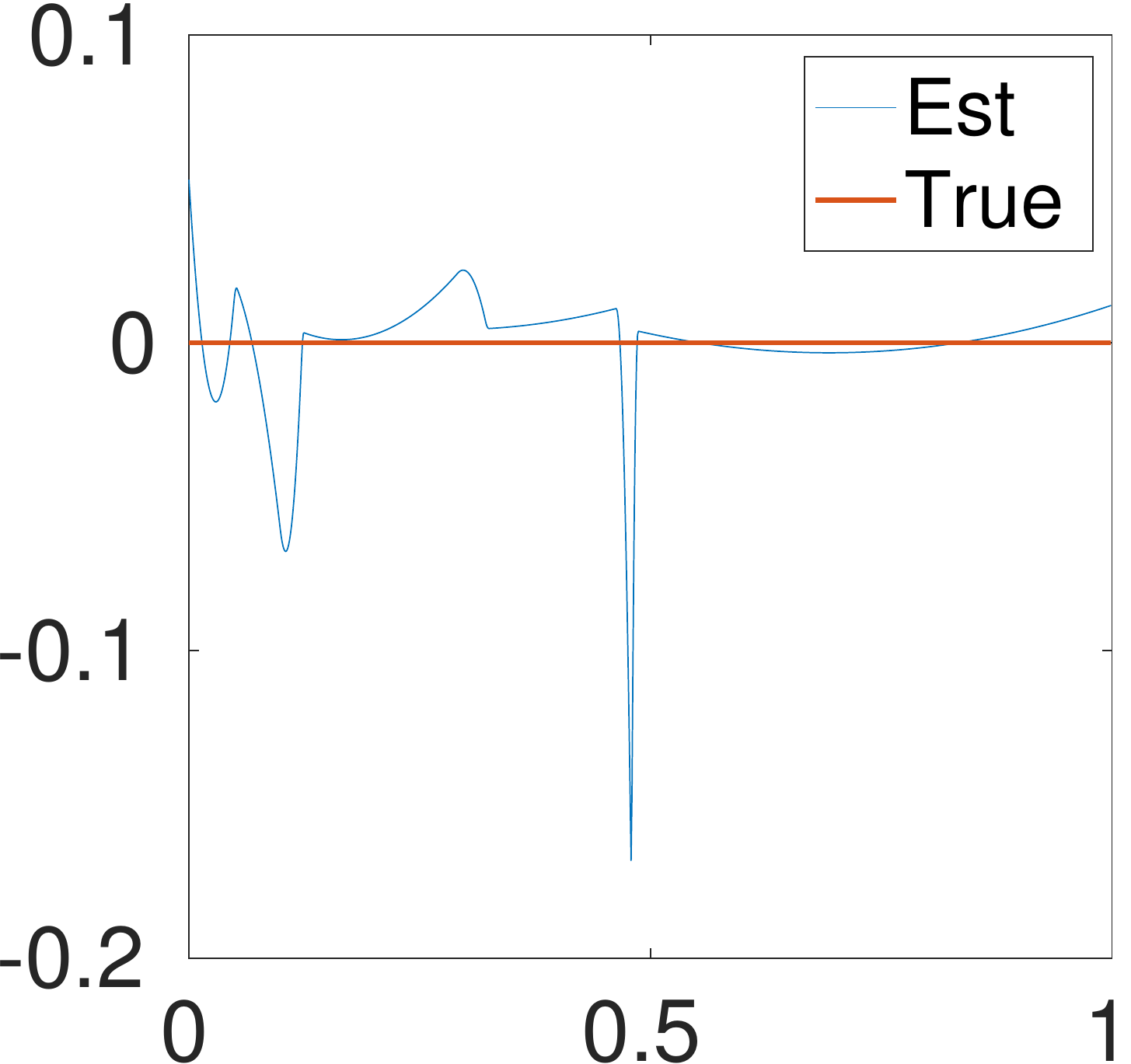}   &
      \includegraphics[width=1.1in]{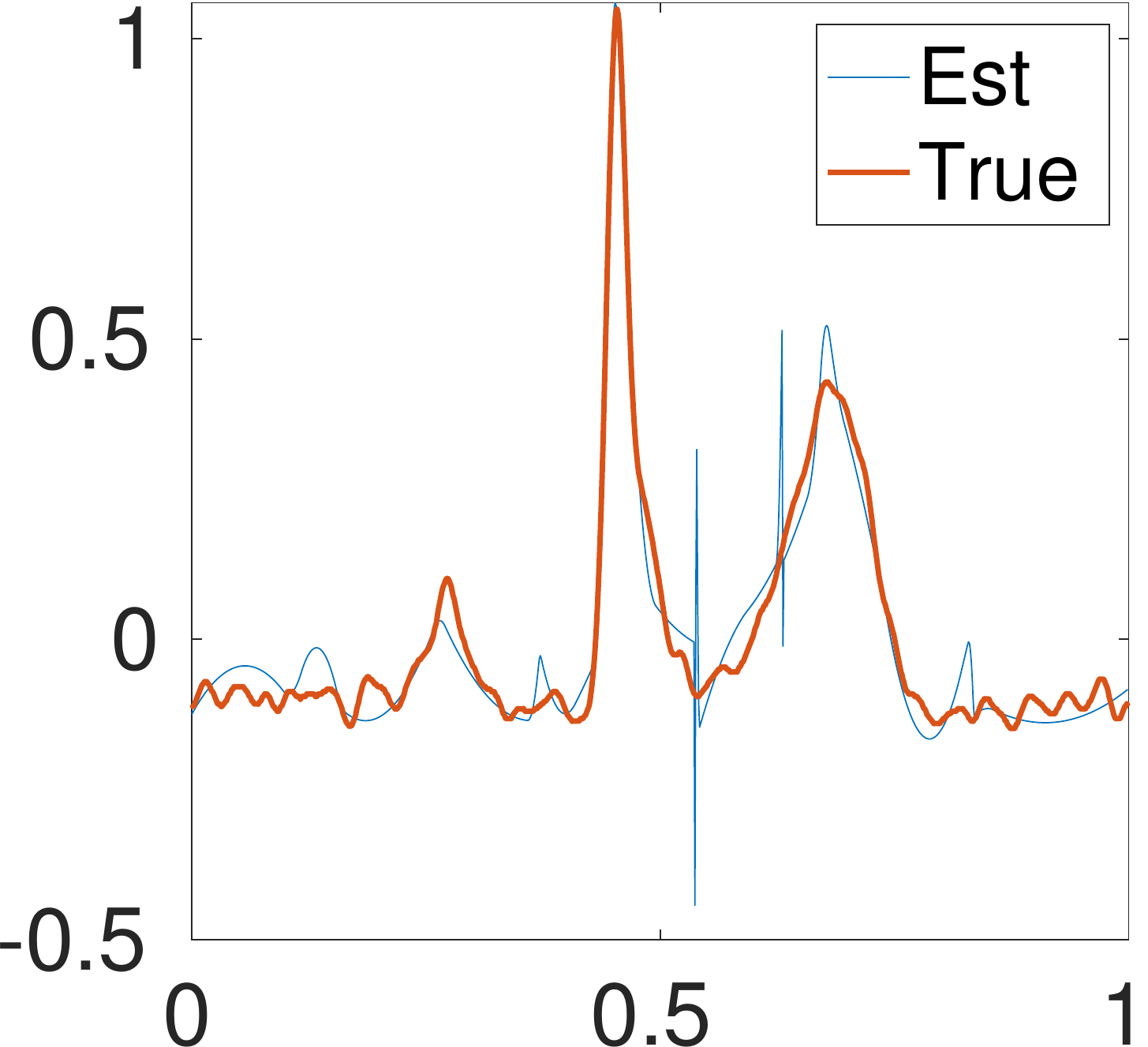}   &
      \includegraphics[width=1in]{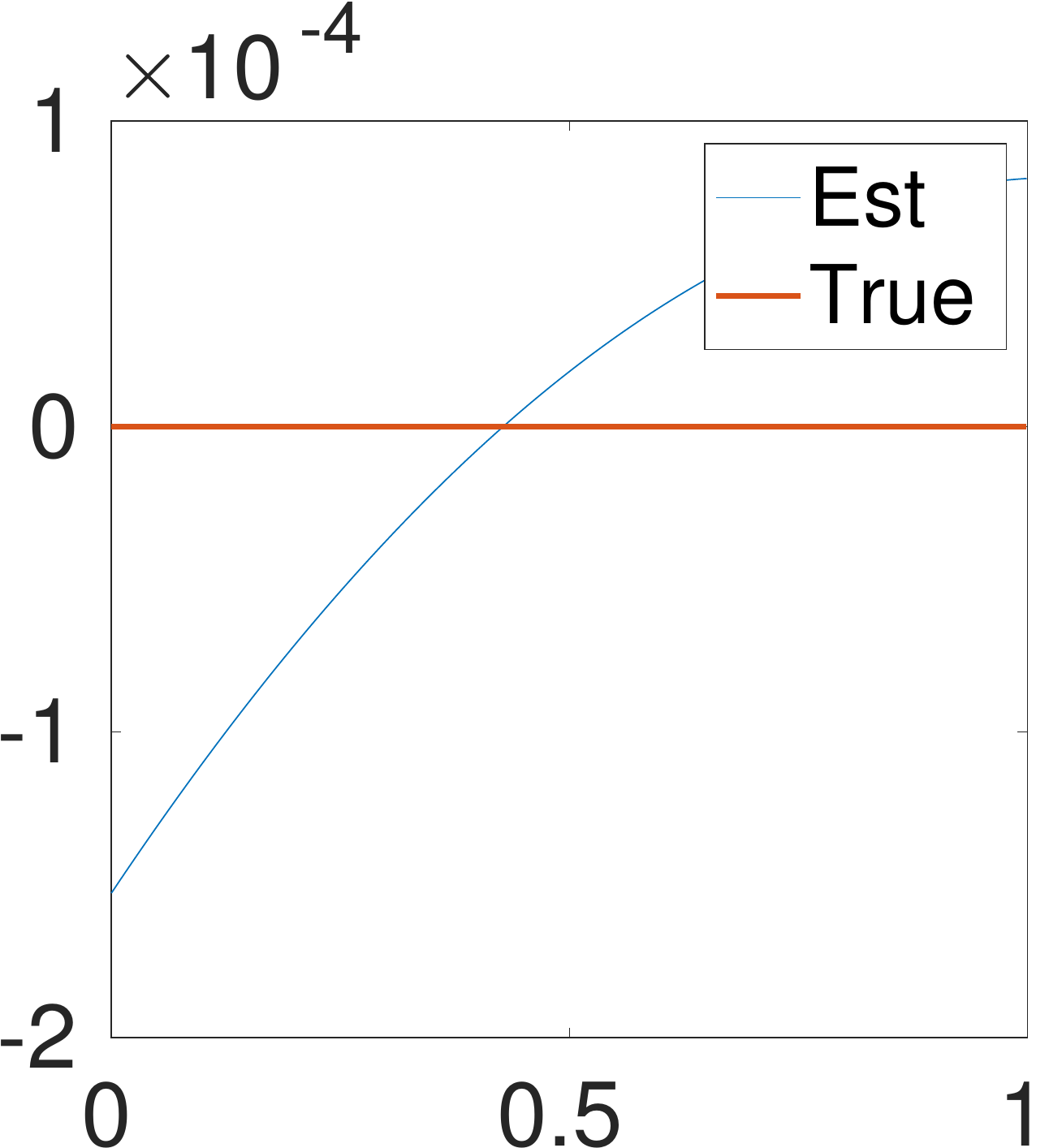}   
    \end{tabular}
  \end{center}
  \caption{Top row: estimated shape functions $a_{0,1}s_{c0,1}(2\pi t)$, $a_{1,1}s_{c1,1}(2\pi t)$, $a_{-1,1}s_{c-1,1}(2\pi t)$, $b_{1,1}s_{s1,1}(2\pi t)$, and $b_{-1,1}s_{s-1,1}(2\pi t)$ of $f_1(t)+ns$, where $f_1(t)$ is given in \eqref{eq:f1_4} and $ns$ is Gaussian random noise with variance $2.25$. Bottom row: estimated shape functions $a_{0,2}s_{c0,2}(2\pi t)$, $a_{1,2}s_{c1,2}(2\pi t)$, $a_{-1,2}s_{c-1,2}(2\pi t)$, $b_{1,2}s_{s1,2}(2\pi t)$, and $b_{-1,2}s_{s-1,2}(2\pi t)$ of $f_2(t)+ns$, where $f_2(t)$ is given in \eqref{eq:f2_4} and $ns$ is Gaussian random noise with variance $2.25$.}
\label{fig:4ns}
\end{figure}

%

\subsection{Multiresolution intrinsic mode functions in real data}

In this section, time series in real application are provided to support the model of multiresolution intrinsic mode functions (MIMFs). The first example is an ECG record from a normal subject and the second example is a motion-contaminated ECG record. The reader is referred to \url{https://www.physionet.org/physiobank/database/} for more details about the ECG data. We compute the band-limited multiresolution approximations of the first example and visualize them in Figure \ref{fig:10_1}, \ref{fig:10_2}, and \ref{fig:10_3}; the band-limited multiresolution approximations of the second example are plotted in Figure \ref{fig:11_1}, \ref{fig:11_2}, and \ref{fig:11_3}. Note that when the bandwidth of the multiresolution approximation increases, the approximation error decreases, and finer variation of the time series can be captured. These observations support the model of MIMF as a superposition of several oscillatory components. In particular, the results in Figure \ref{fig:10_2} shows that, as the bandwidth increases, the variation of the time-varying amplitude of the signal has been captured in the high frequency components of the MIMF. Figure \ref{fig:10_3} and \ref{fig:11_3} show the first five shape functions of these two examples, respectively; all shape functions vary a lot at different level of resolution. The actual time-varying shape of an ECG signals we see in the raw data is not exactly any single shape function in the shape function series; they are actually the results of all shape functions in the shape function series. This completes the validation of the proposed MIMF model.

\begin{figure}[ht!]
  \begin{center}
    \begin{tabular}{c}
      \includegraphics[width=6in]{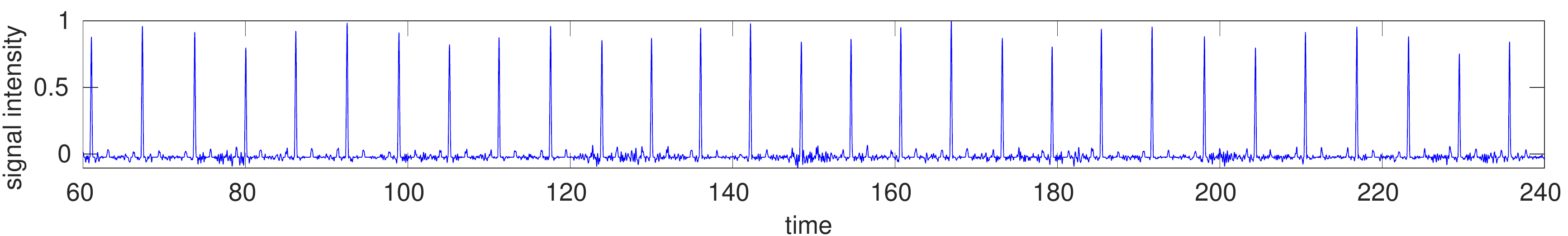}  \\
      An ECG record from a normal subject $f(t)$\\
      \includegraphics[width=6in]{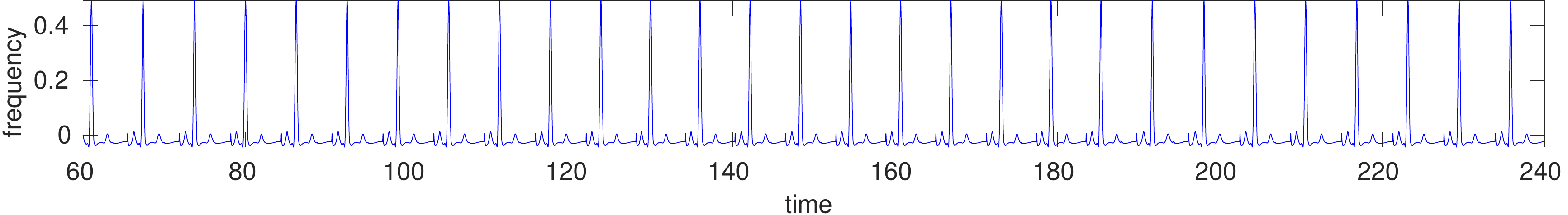}  \\
     $0$-banded multiresolution approximation $\mathcal{M}_0(f)(t)$\\
      \includegraphics[width=6in]{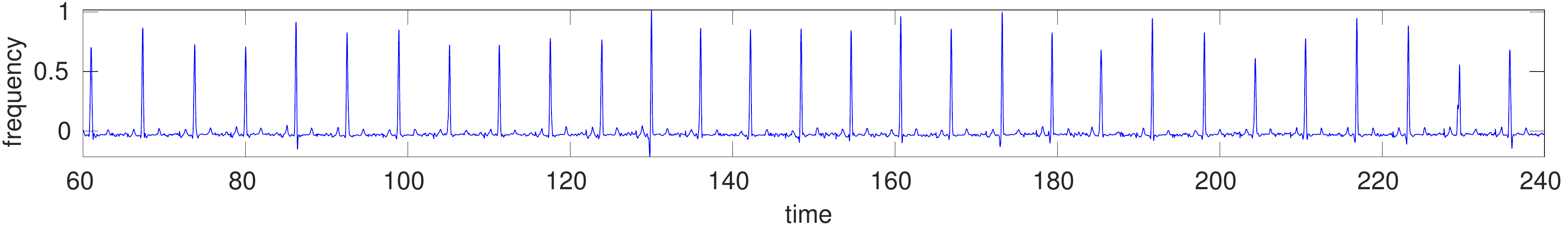}  \\
     $20$-banded multiresolution approximation $\mathcal{M}_{20}(f)(t)$\\
      \includegraphics[width=6in]{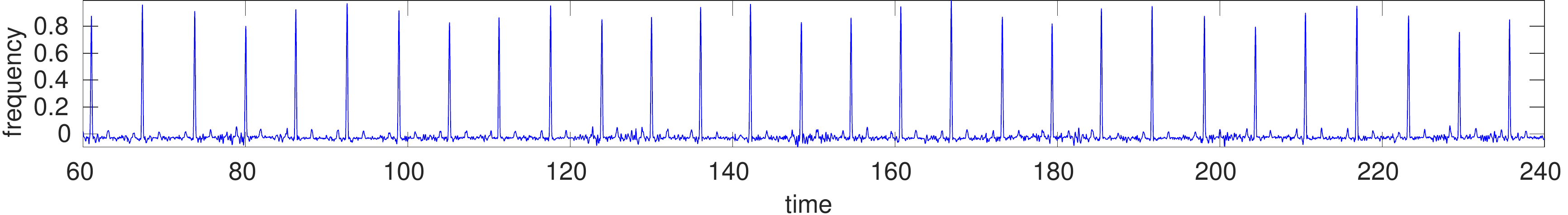}  \\
      $40$-banded multiresolution approximation $\mathcal{M}_{40}(f)(t)$
    \end{tabular}
  \end{center}
  \caption{Multiresolution approximations of an ECG record from a normal subject.}
\label{fig:10_1}
\end{figure}

\begin{figure}[ht!]
  \begin{center}
    \begin{tabular}{c}
      \includegraphics[width=6in]{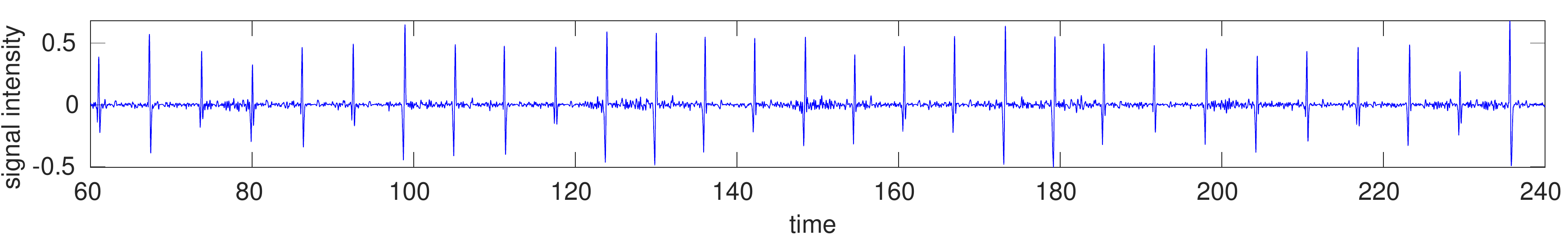}  \\
      $f(t)-\mathcal{M}_0(f)(t)$ \\
      \includegraphics[width=6in]{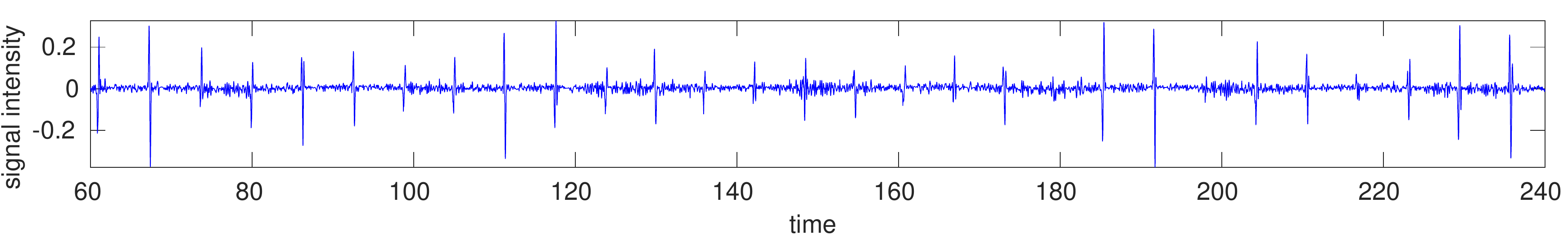}  \\
      $f(t)-\mathcal{M}_{20}(f)(t)$ \\
      \includegraphics[width=6in]{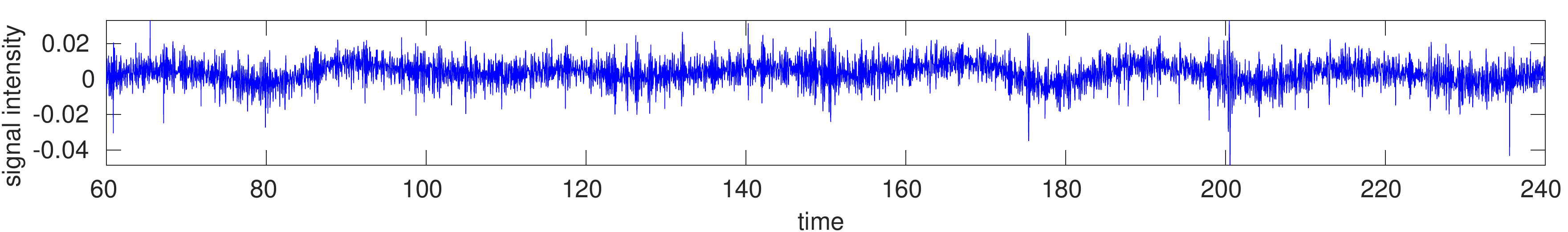}  \\
      $f(t)-\mathcal{M}_{40}(f)(t)$ 
    \end{tabular}
  \end{center}
  \caption{The residual of the multiresolution approximations of an ECG record from a normal subject in Figure \ref{fig:10_1}.}
\label{fig:10_2}
\end{figure}

\begin{figure}[ht!]
  \begin{center}
    \begin{tabular}{ccccc}
      \includegraphics[width=1.1in]{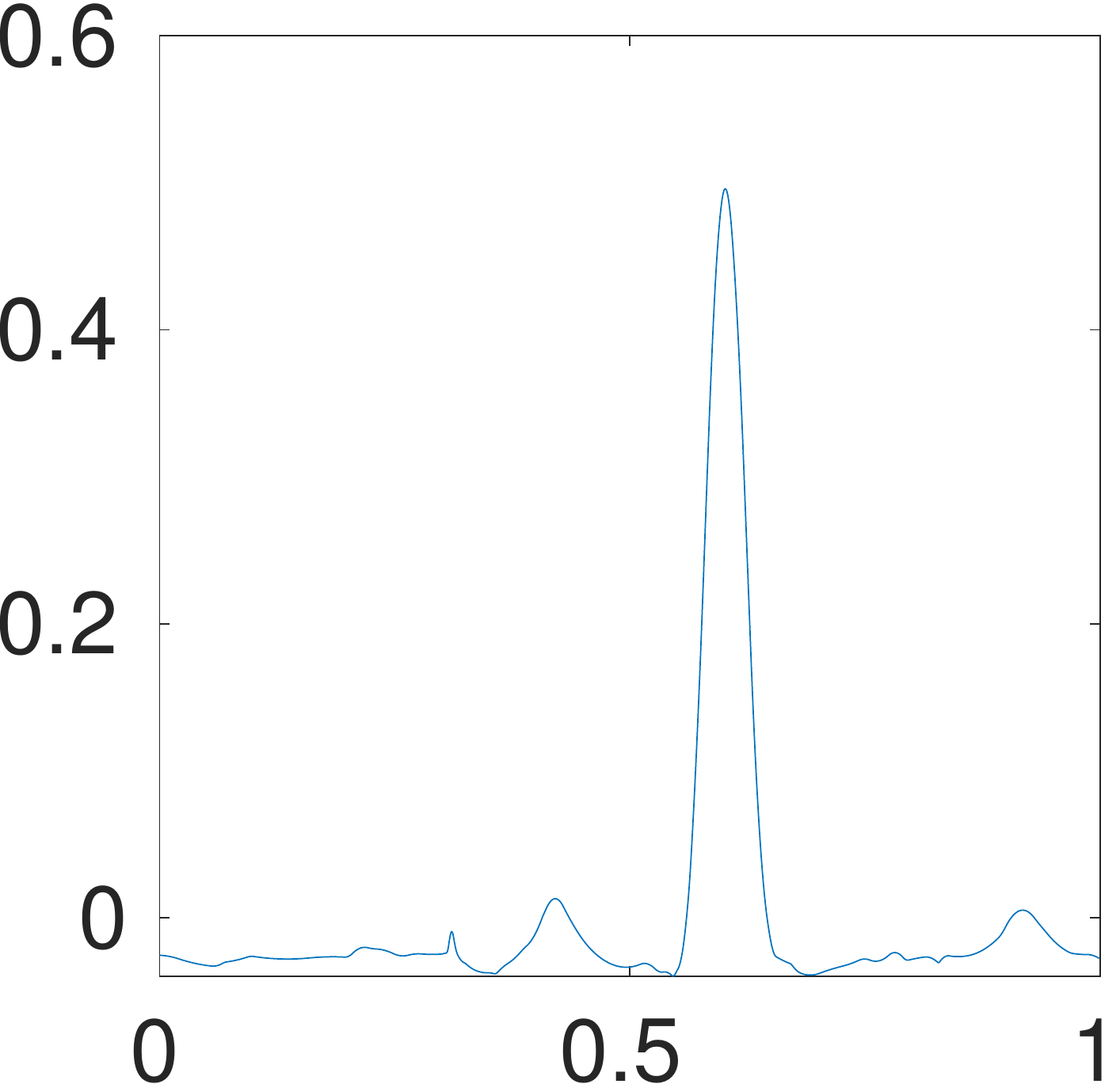}   &
      \includegraphics[width=1.1in]{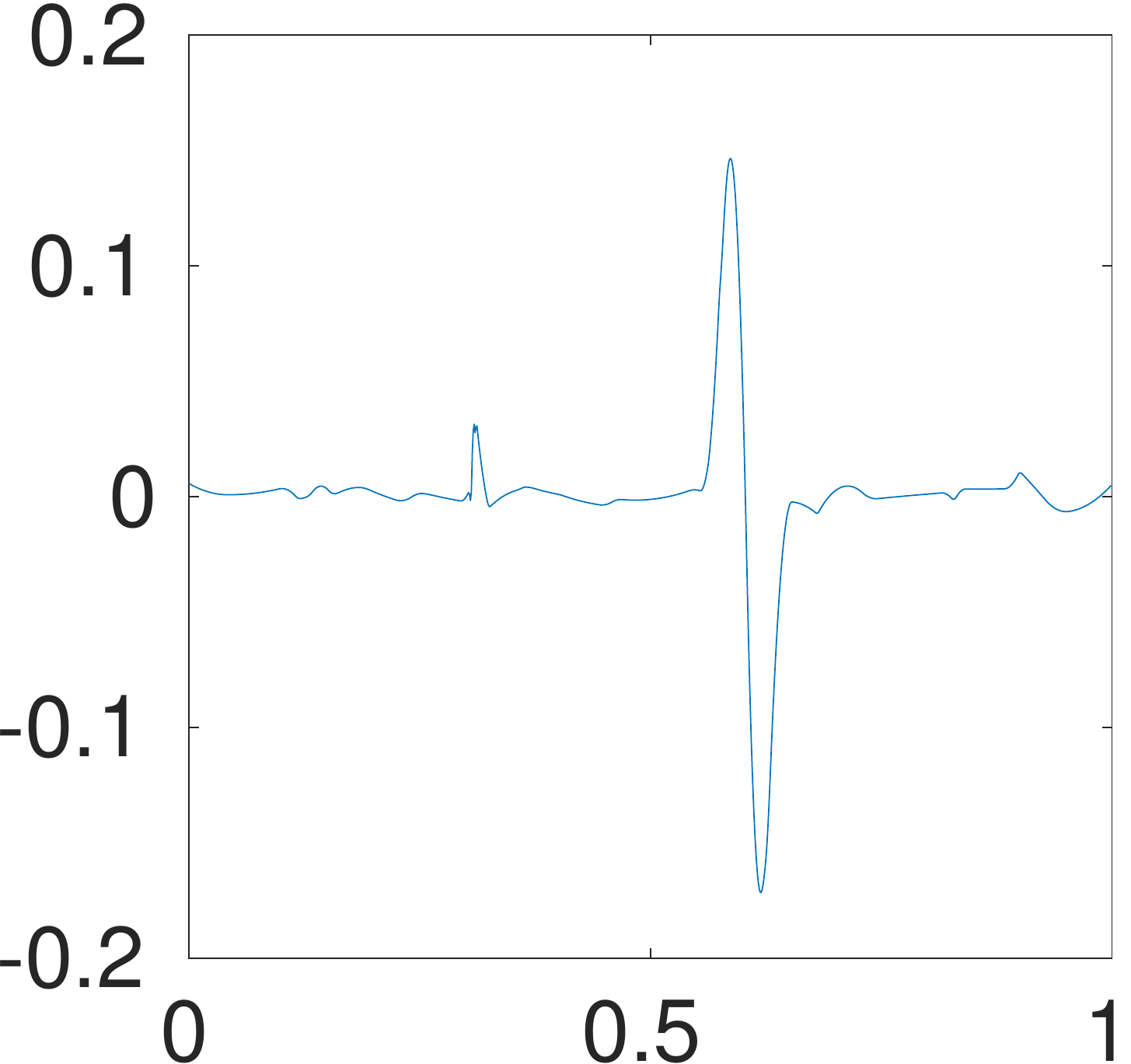}   &
      \includegraphics[width=1in]{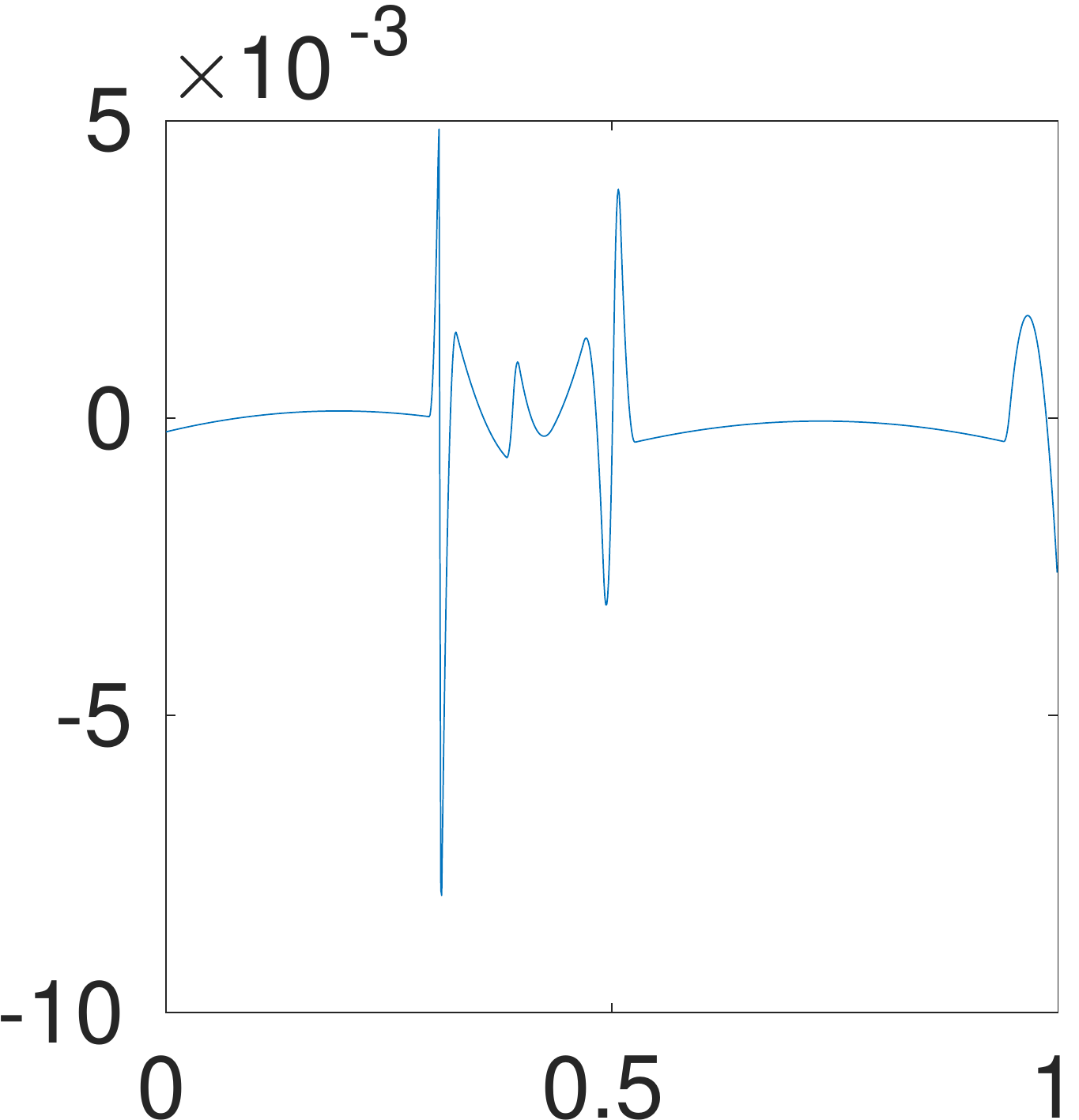}   &
      \includegraphics[width=1.1in]{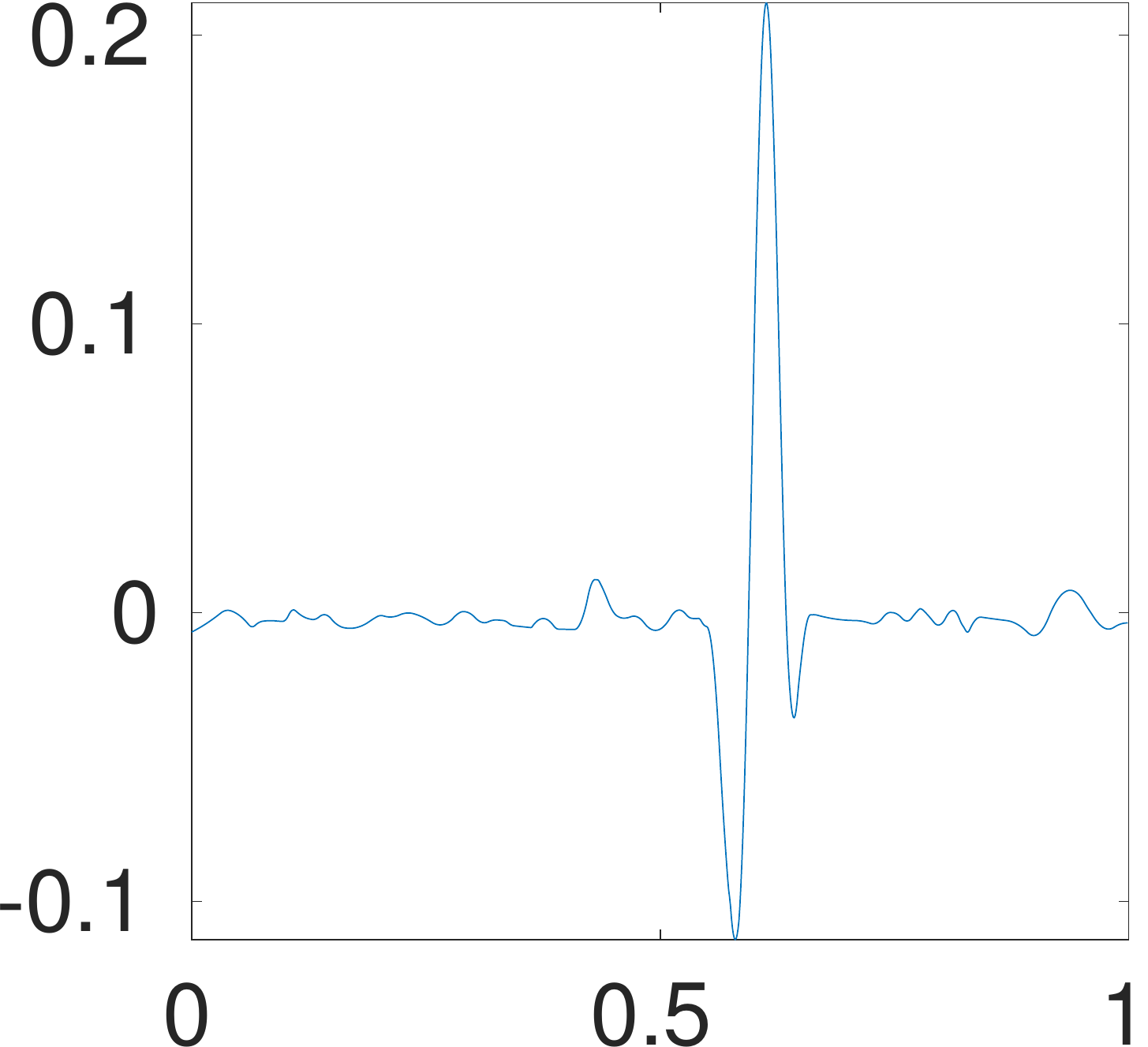}   &
      \includegraphics[width=1in]{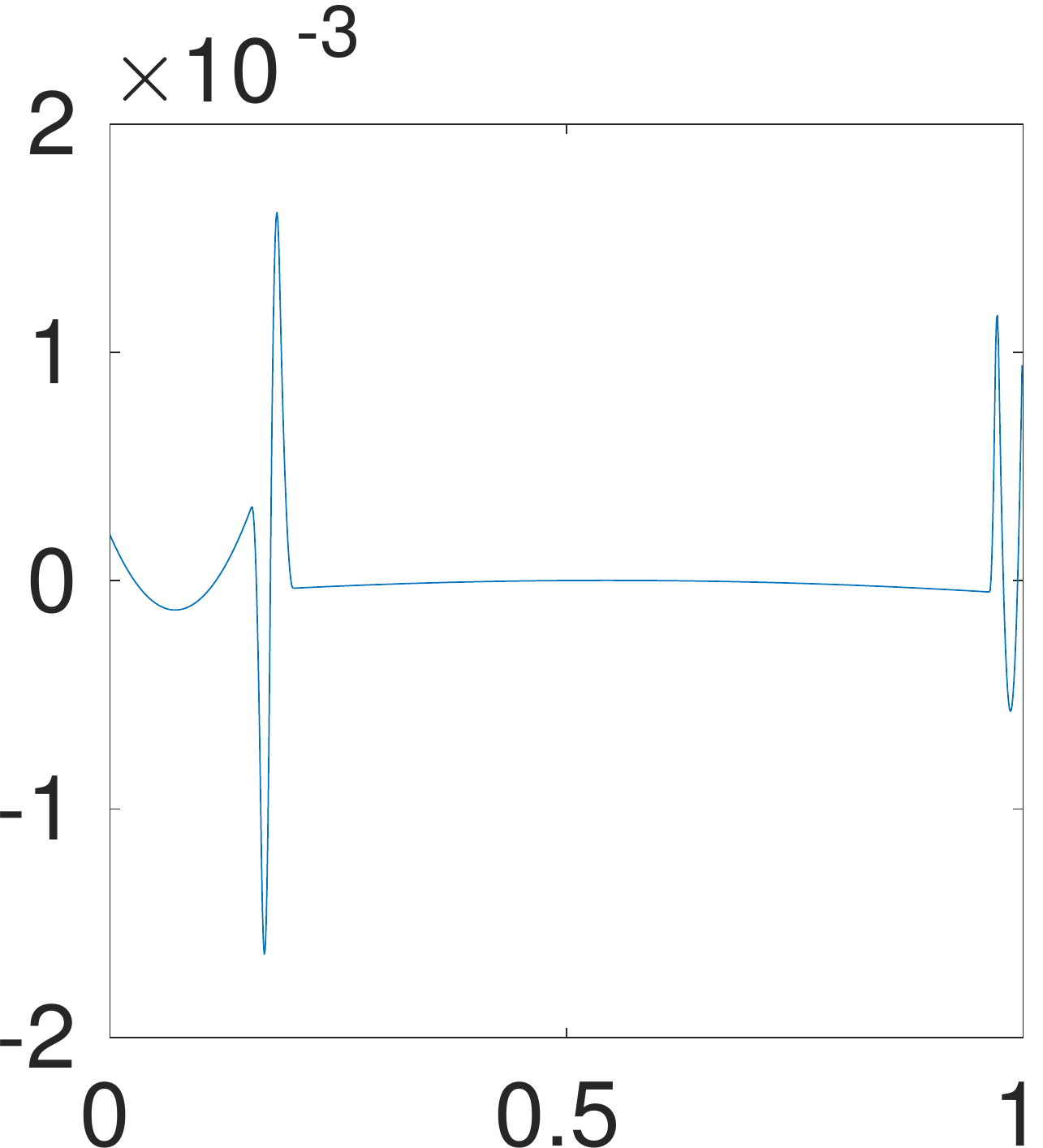}    
    \end{tabular}
  \end{center}
  \caption{Estimated shape functions $a_0 s_{c0}(t)$, $a_1 s_{c1}(t)$, $a_{-1} s_{c-1}(t)$, $b_1 s_{s1}(t)$, and $b_{-1} s_{s-1}(t)$ for the ECG signal in Figure \ref{fig:10_1}.}
\label{fig:10_3}
\end{figure}

\begin{figure}[ht!]
  \begin{center}
    \begin{tabular}{c}
      \includegraphics[width=6in]{Pictures/MMD_fig12_org.pdf}  \\
      A motion-contaminated ECG record $f(t)$\\
     \includegraphics[width=6in]{Pictures/MMD_fig12_comp_bw_0.pdf}  \\
     $0$-banded multiresolution approximation $\mathcal{M}_0(f)(t)$\\
      \includegraphics[width=6in]{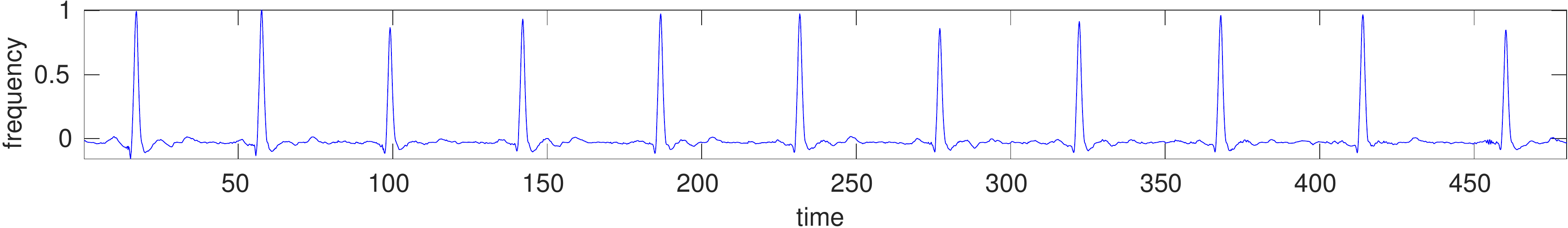}  \\
     $20$-banded multiresolution approximation $\mathcal{M}_{20}(f)(t)$\\
      \includegraphics[width=6in]{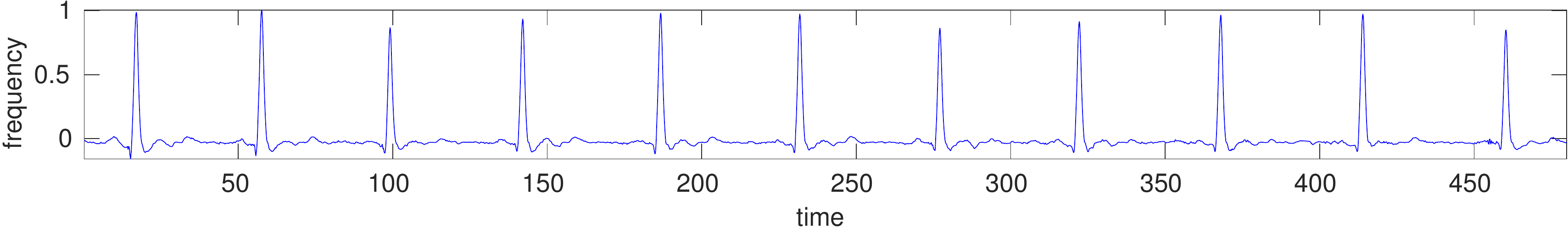}  \\
     $40$-banded multiresolution approximation $\mathcal{M}_{40}(f)(t)$
    \end{tabular}
  \end{center}
  \caption{Multiresolution approximations of a motion-contaminated ECG record.}
\label{fig:11_1}
\end{figure}

\begin{figure}[ht!]
  \begin{center}
    \begin{tabular}{c}
      \includegraphics[width=6in]{Pictures/MMD_fig12_comp_res_0.pdf}  \\
      $f(t)-\mathcal{M}_{0}(f)(t)$ \\
      \includegraphics[width=6in]{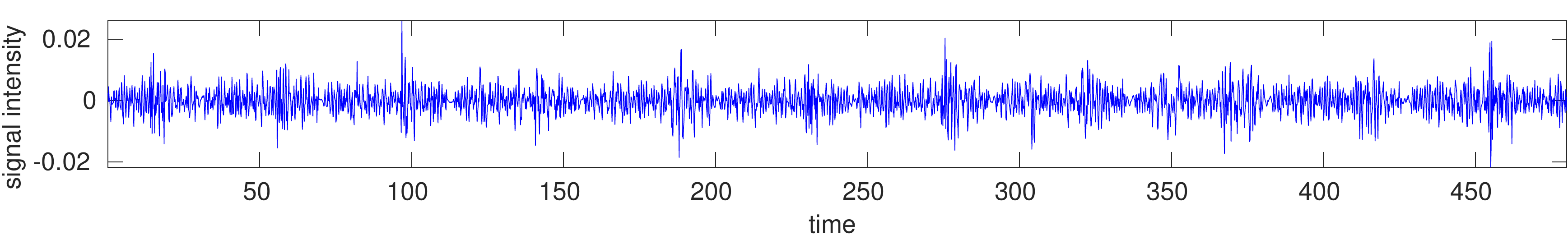}  \\
      $f(t)-\mathcal{M}_{20}(f)(t)$ \\
      \includegraphics[width=6in]{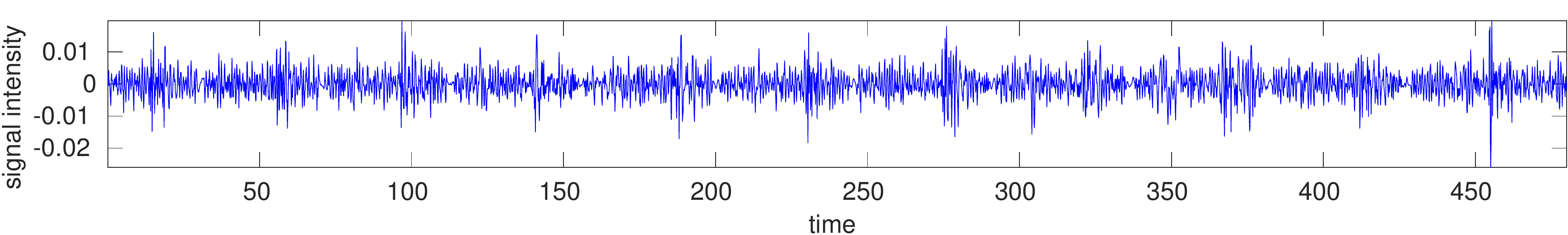}  \\
      $f(t)-\mathcal{M}_{40}(f)(t)$ 
    \end{tabular}
  \end{center}
  \caption{The residual of the multiresolution approximations of a motion-contaminated ECG record in Figure \ref{fig:11_1}.}
\label{fig:11_2}
\end{figure}

\begin{figure}[ht!]
  \begin{center}
    \begin{tabular}{ccccc}
      \includegraphics[width=1.1in]{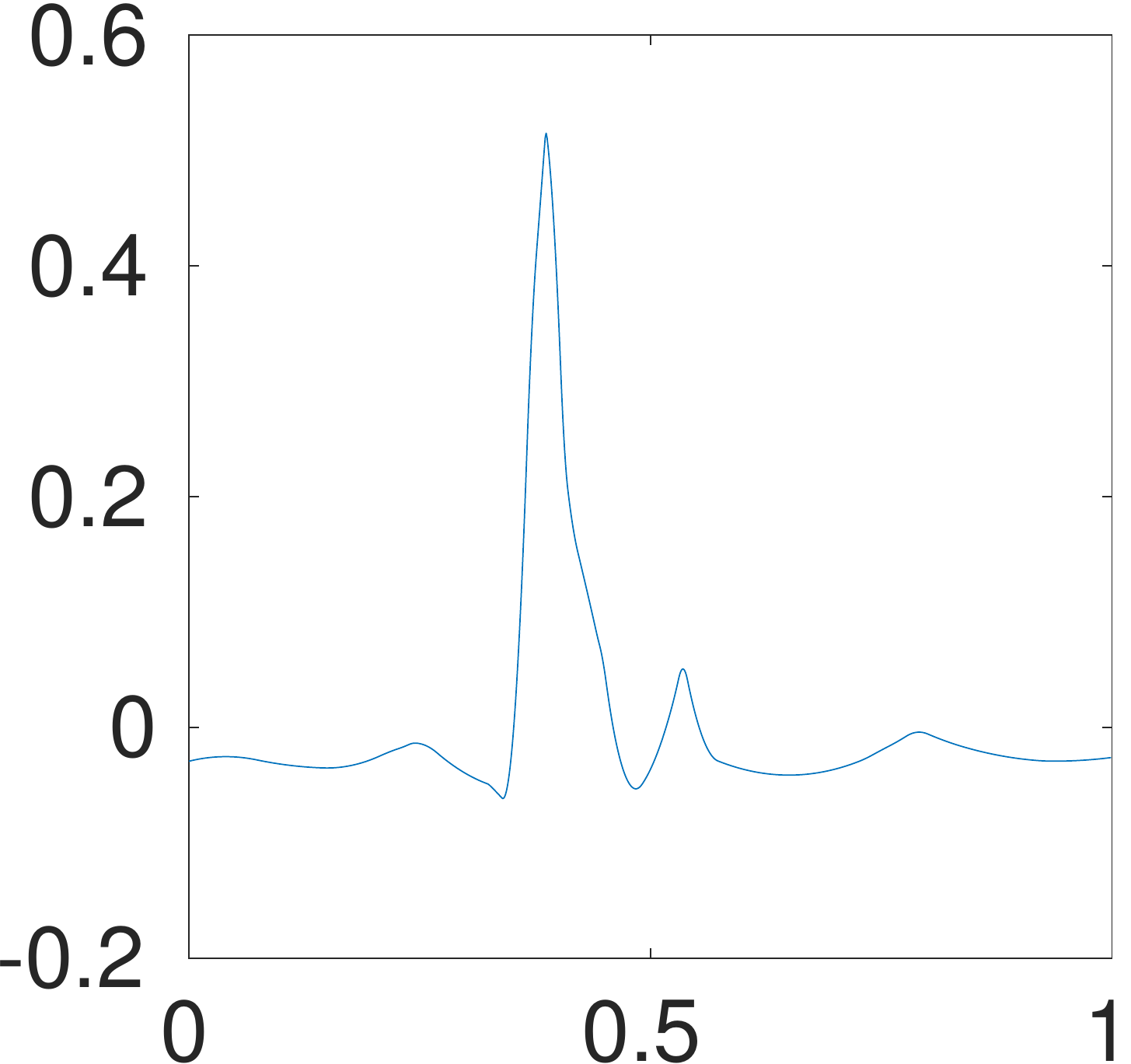}   &
      \includegraphics[width=1.1in]{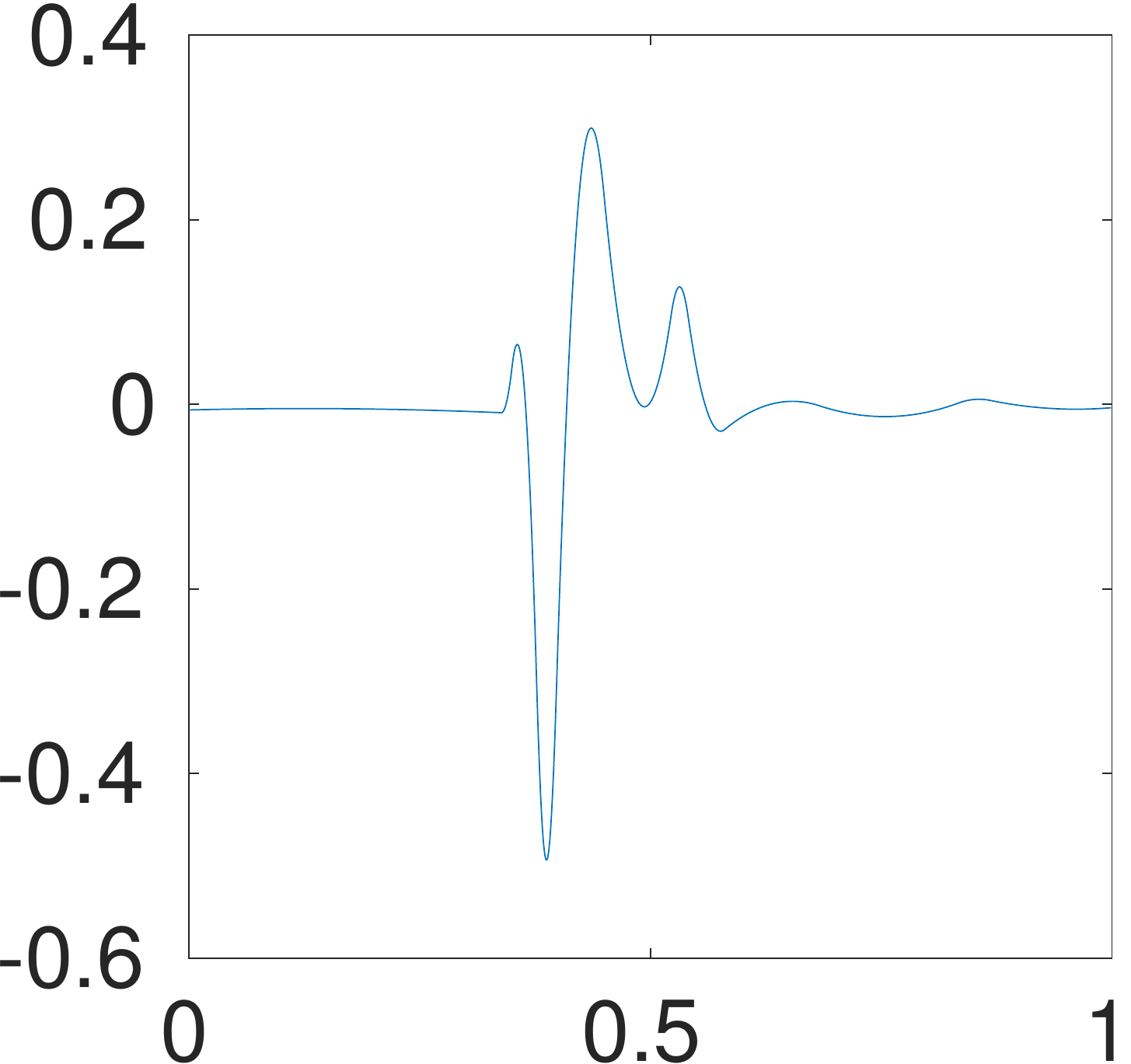}   &
      \includegraphics[width=1in]{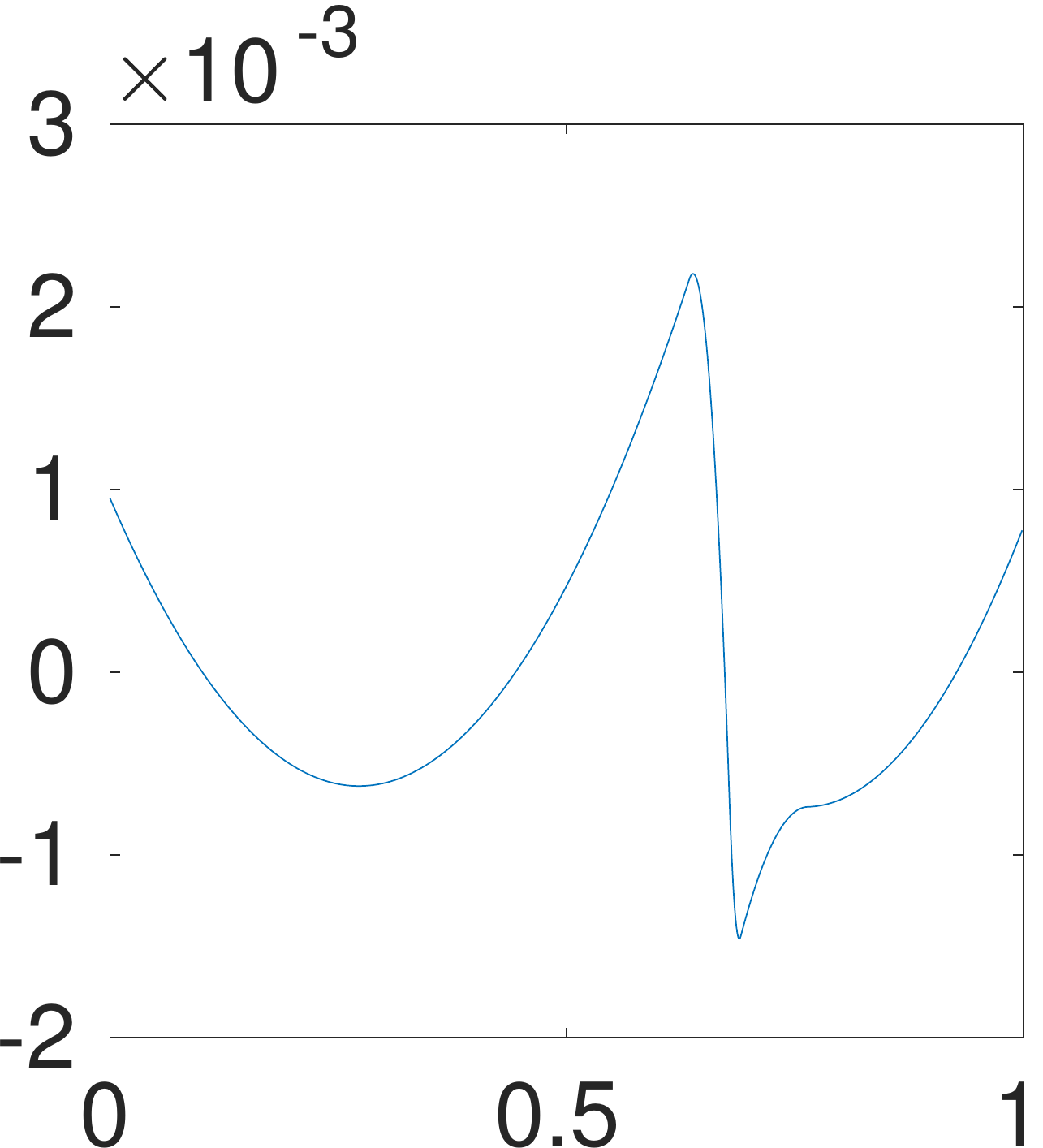}   &
      \includegraphics[width=1.1in]{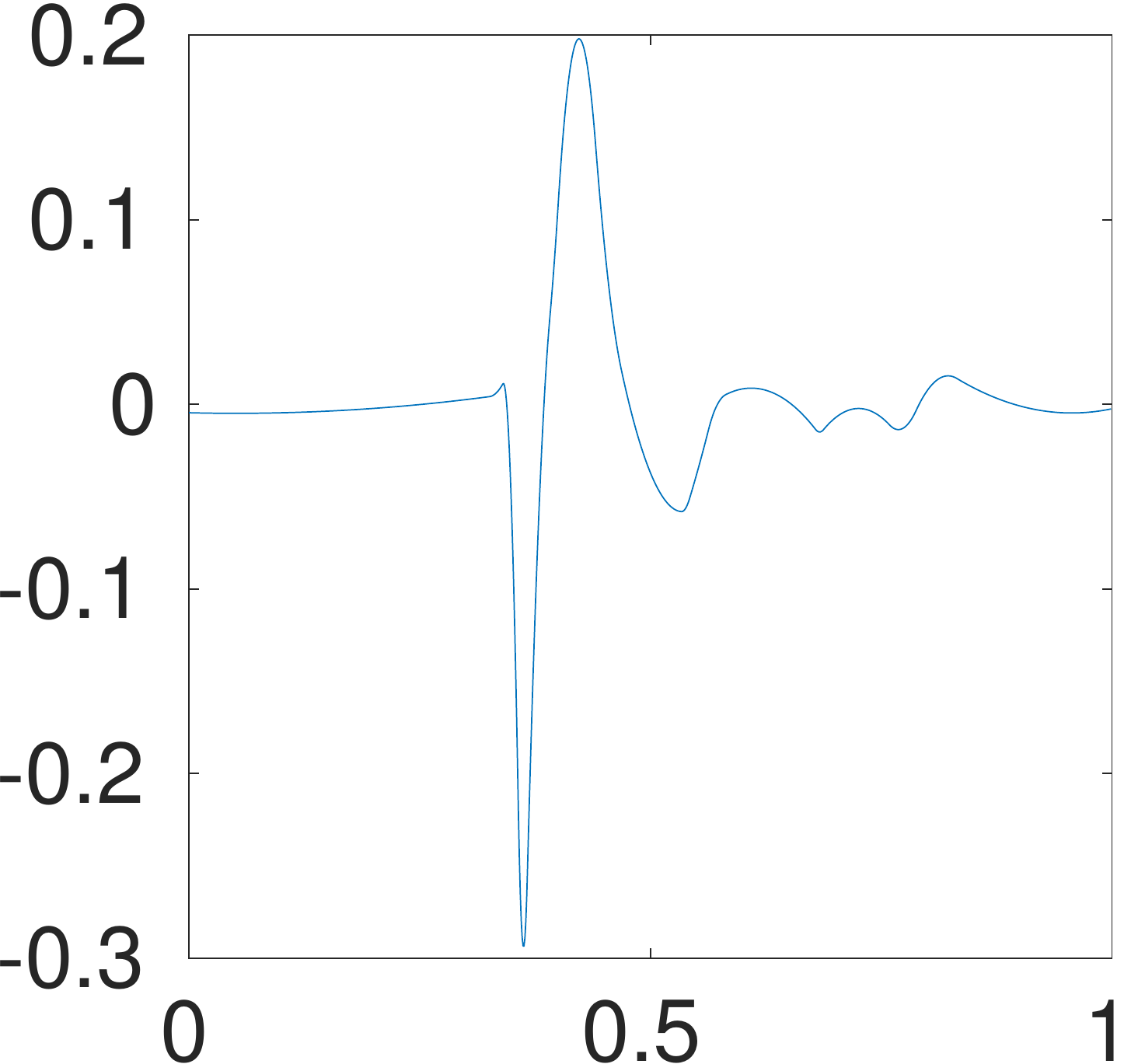}   &
      \includegraphics[width=1in]{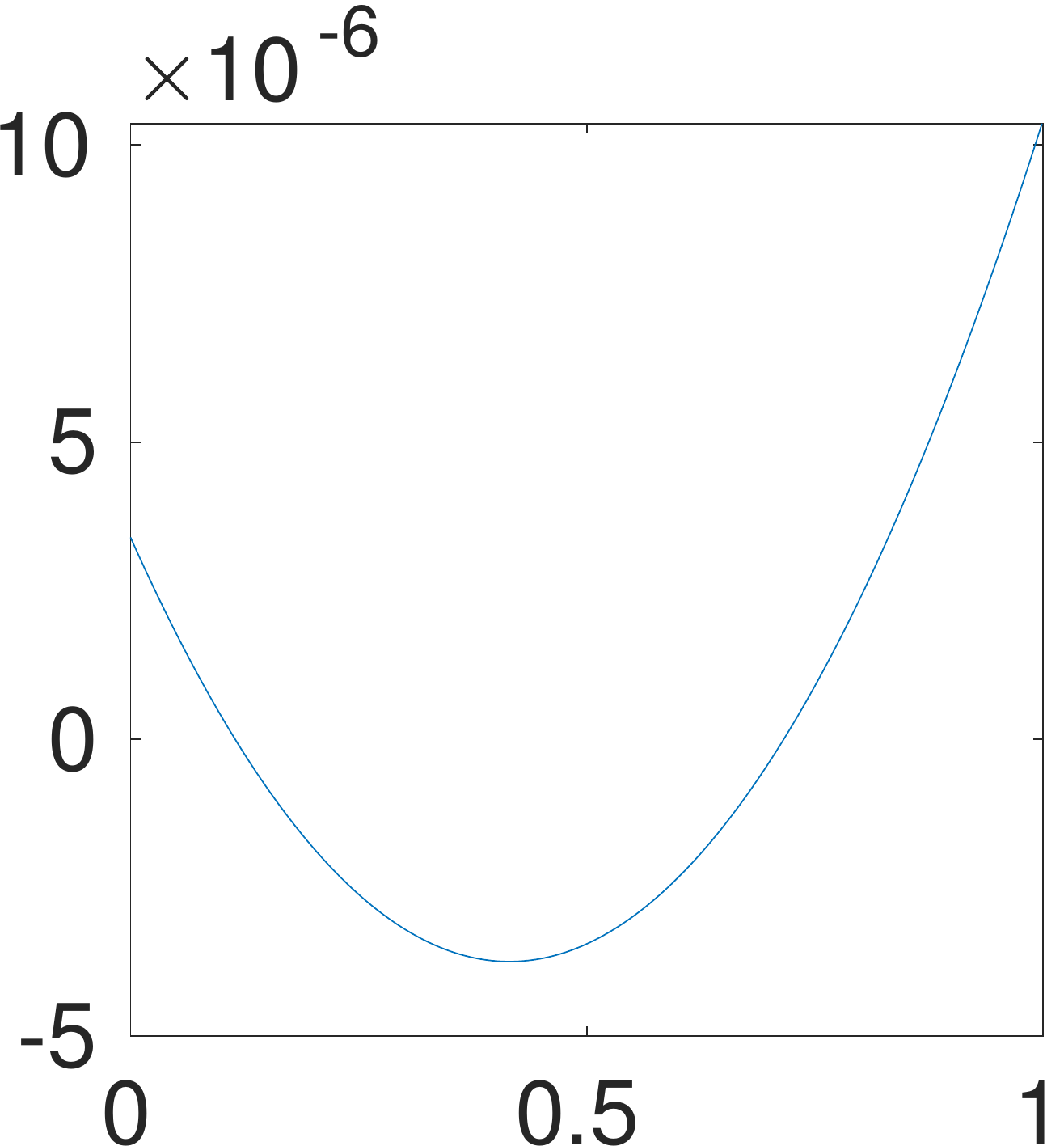}    
    \end{tabular}
  \end{center}
  \caption{Estimated shape functions $a_0 s_{c0}(t)$, $a_1 s_{c1}(t)$, $a_{-1} s_{c-1}(t)$, $b_1 s_{s1}(t)$, and $b_{-1} s_{s-1}(t)$ for the ECG signal in Figure \ref{fig:11_1}.}
\label{fig:11_3}
\end{figure}

\subsection{Real example of multiresolution mode decomposition}

In this section, we provide a real example to demonstrate the application of the multiresolution mode decomposition. This is an example of photoplethysmography (PPG)\footnote{From \url{http://www.capnobase.org}.} that contains the hemodynamical information as well as the respiration information. Hence, the PPG signal essentially contains two MIMFs. In this example, the instantaneous frequencies and phases are not known and they are estimated via the synchrosqueezed transform in \cite{1DSSWPT}. Figure \ref{fig:15_1} shows the estimated fundamental instantaneous frequencies of the respiratory and cardiac cycles. Inputing their corresponding instantaneous phases into the MMD algorithm, the PPG signal is separated into a respiratory MIMF and a cardiac MIMF as shown in Figure \ref{fig:15_2}; their leading multiresolution shape functions are shown in Figure \ref{fig:15_3}. 

The last two panels of Figure \ref{fig:15_2} shows that the PPG signal has been completely separated into two MIMFs; the residual signal only contains noise, a smooth trend, and some sharp changes that are not correlated to the oscillation in MIMFs. It is worth to exploring the application of MIMFs and MMD for fault detection in health data. For example, the detection of abnormal ECG waveforms is important to cardiac disease diagnosis \cite{lippincott2011ecg,hyper}; the abnormality can be identified by detecting the sharp changes in the residual signal that are not correlated to the normal oscillation pattern. The second panel shows that the MIMF model can characterize time-varying shape functions, while the third panel shows that the MIMF model can capture the time-varying amplitude functions. 

\begin{figure}[ht!]
  \begin{center}
    \begin{tabular}{c}
  \hspace{-0.5cm}  \includegraphics[width=6.1in]{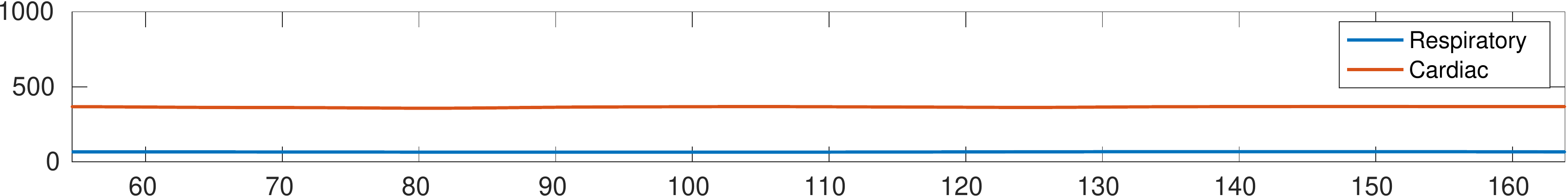} 
    \end{tabular}
  \end{center}
  \caption{Estimated fundamental instantaneous frequencies of the real PPG signal in the first panel of Figure \ref{fig:15_2} by the synchrosqueezed transform.}
\label{fig:15_1}
\end{figure}

\begin{figure}[ht!]
  \begin{center}
    \begin{tabular}{c}
      \includegraphics[width=6in]{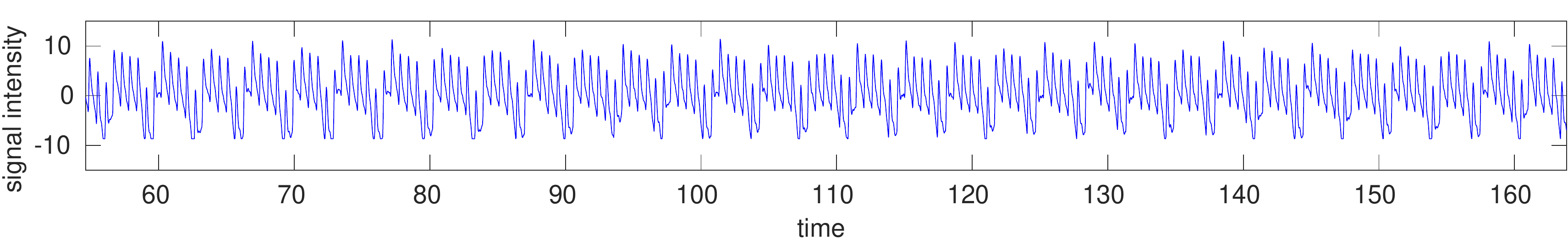}  \\
   \includegraphics[width=6in]{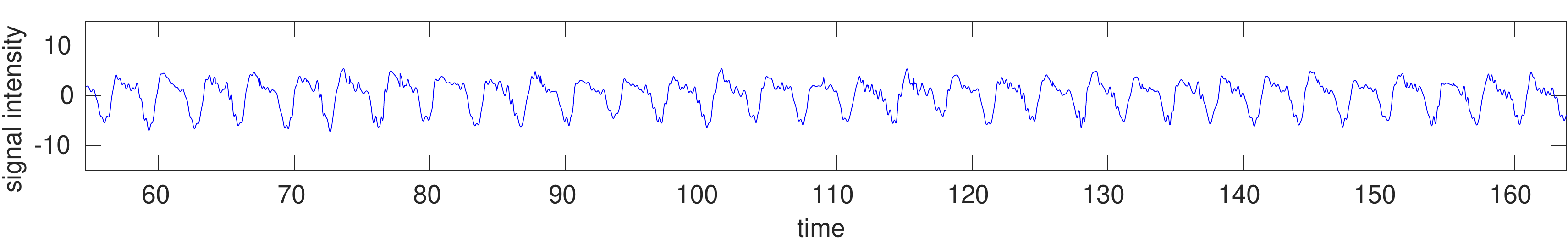}   \\
   \includegraphics[width=6in]{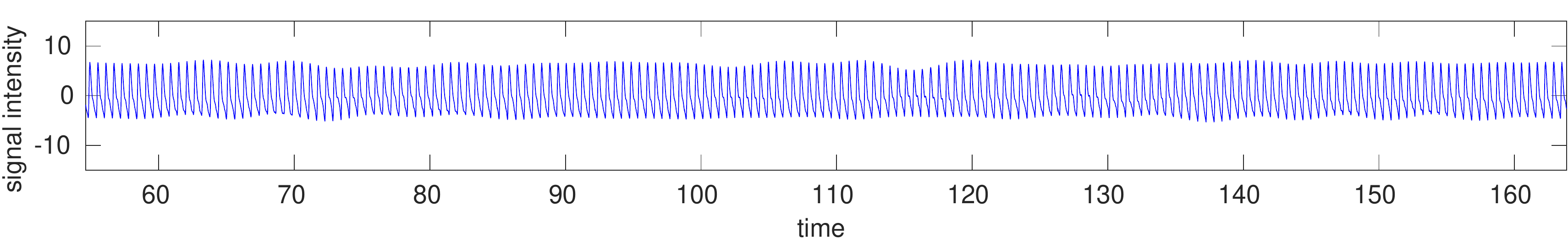}  \\
      \includegraphics[width=6in]{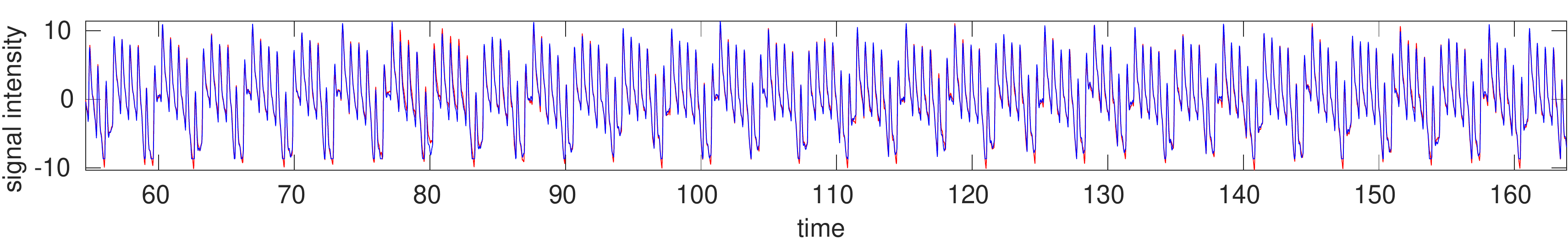}  \\
      \includegraphics[width=6in]{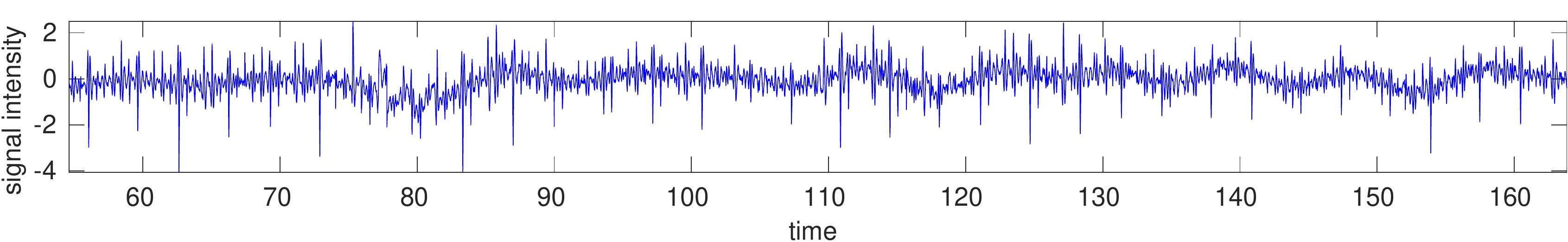}  
    \end{tabular}
  \end{center}
  \caption{First panel: the raw PPG signal $f(t)$. Second panel: the respiratory MIMF $f_1(t)$. Third panel: the cardiac MIMF $f_2(t)$. Fourth panel: the summation of the respiratory and cardiac MIMFs $f_1(t)+f_2(t)$ (red) compared to the raw PPG signal $f(t)$ (blue). The fifth panel: the residual signal $f(t)-f_1(t)-f_2(t)$.}
\label{fig:15_2}
\end{figure}

\begin{figure}[ht!]
  \begin{center}
    \begin{tabular}{ccccc}
      \includegraphics[width=0.975in]{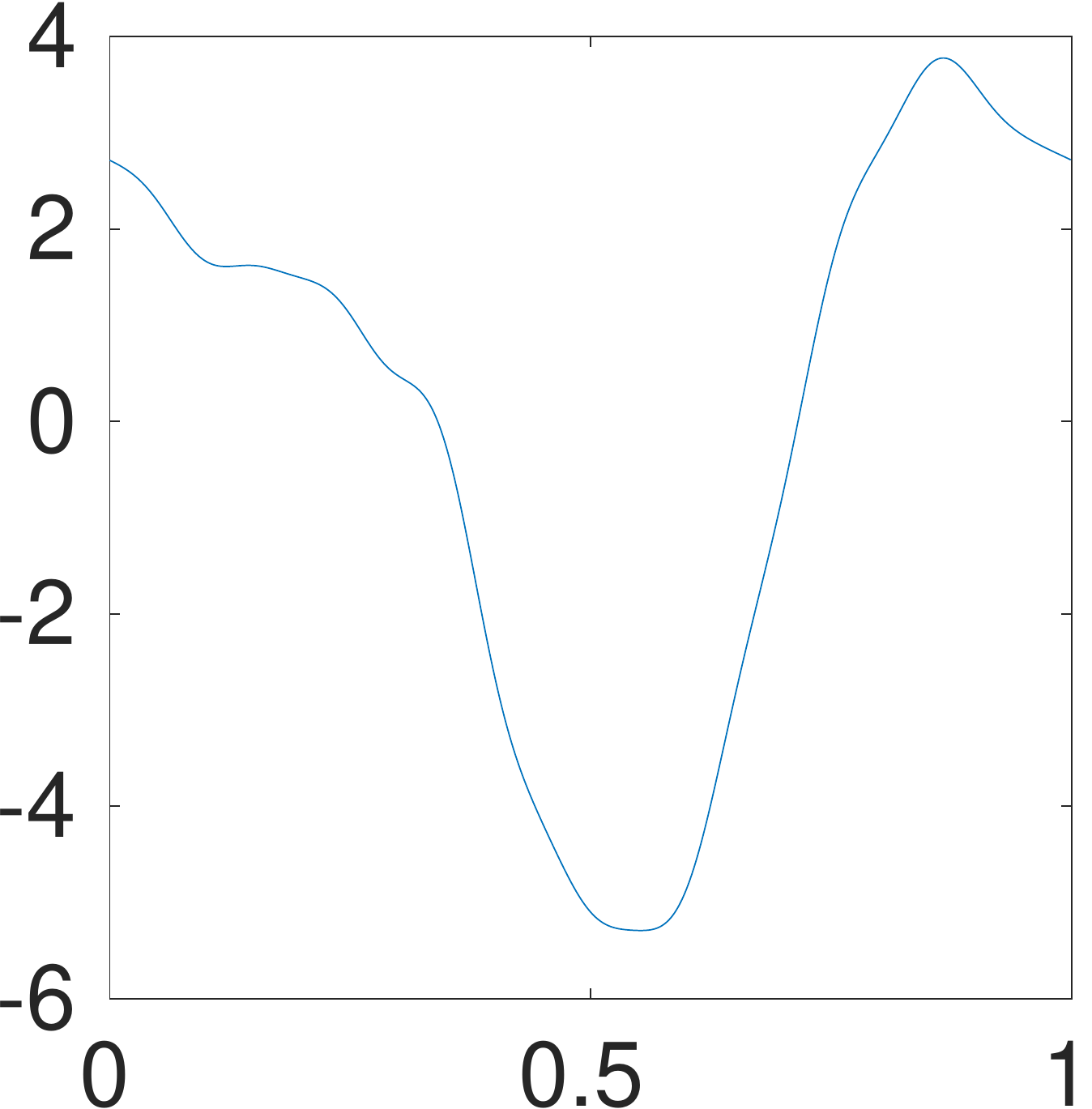}   &
      \includegraphics[width=1.05in]{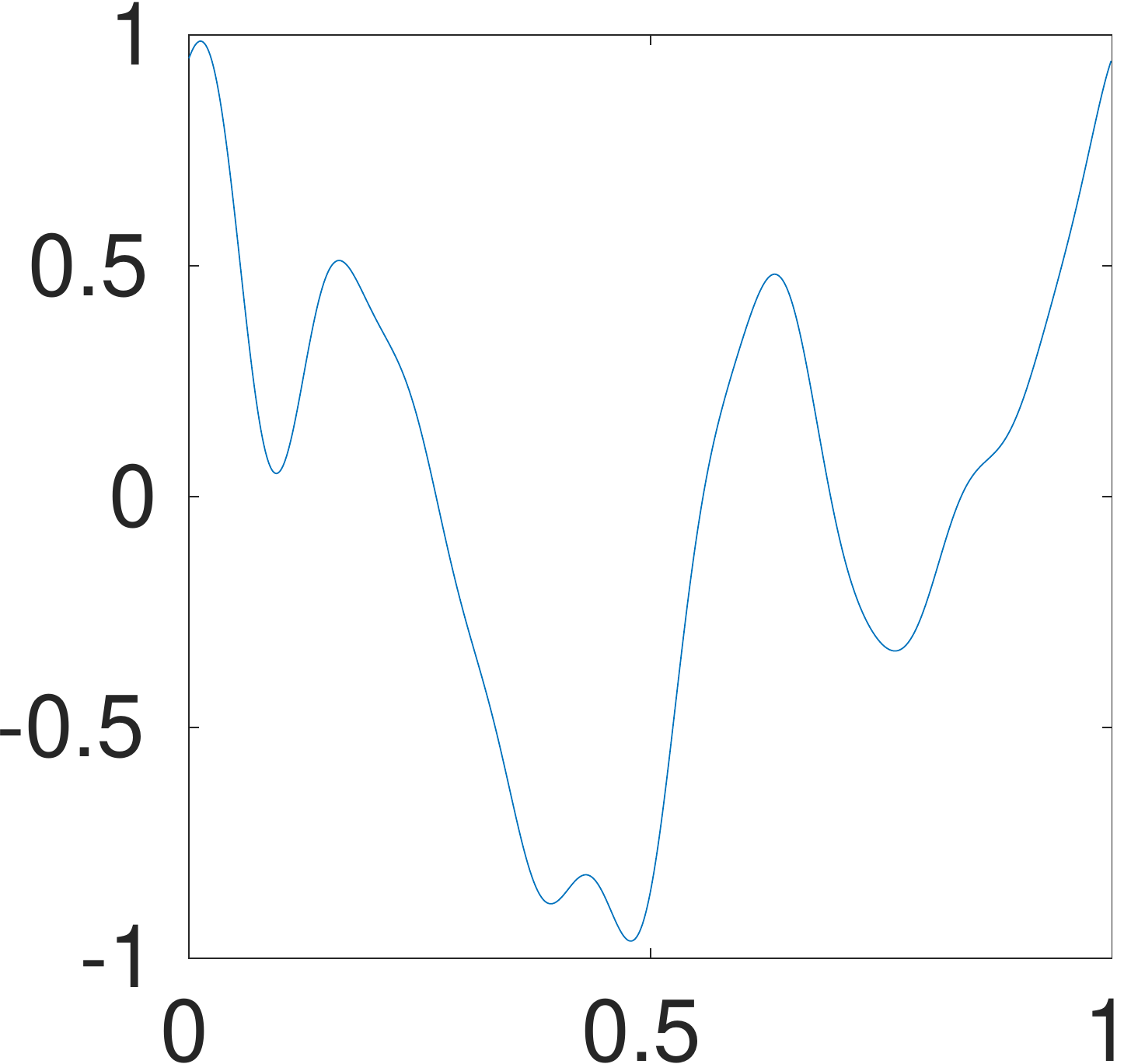}   &
      \includegraphics[width=1.1in]{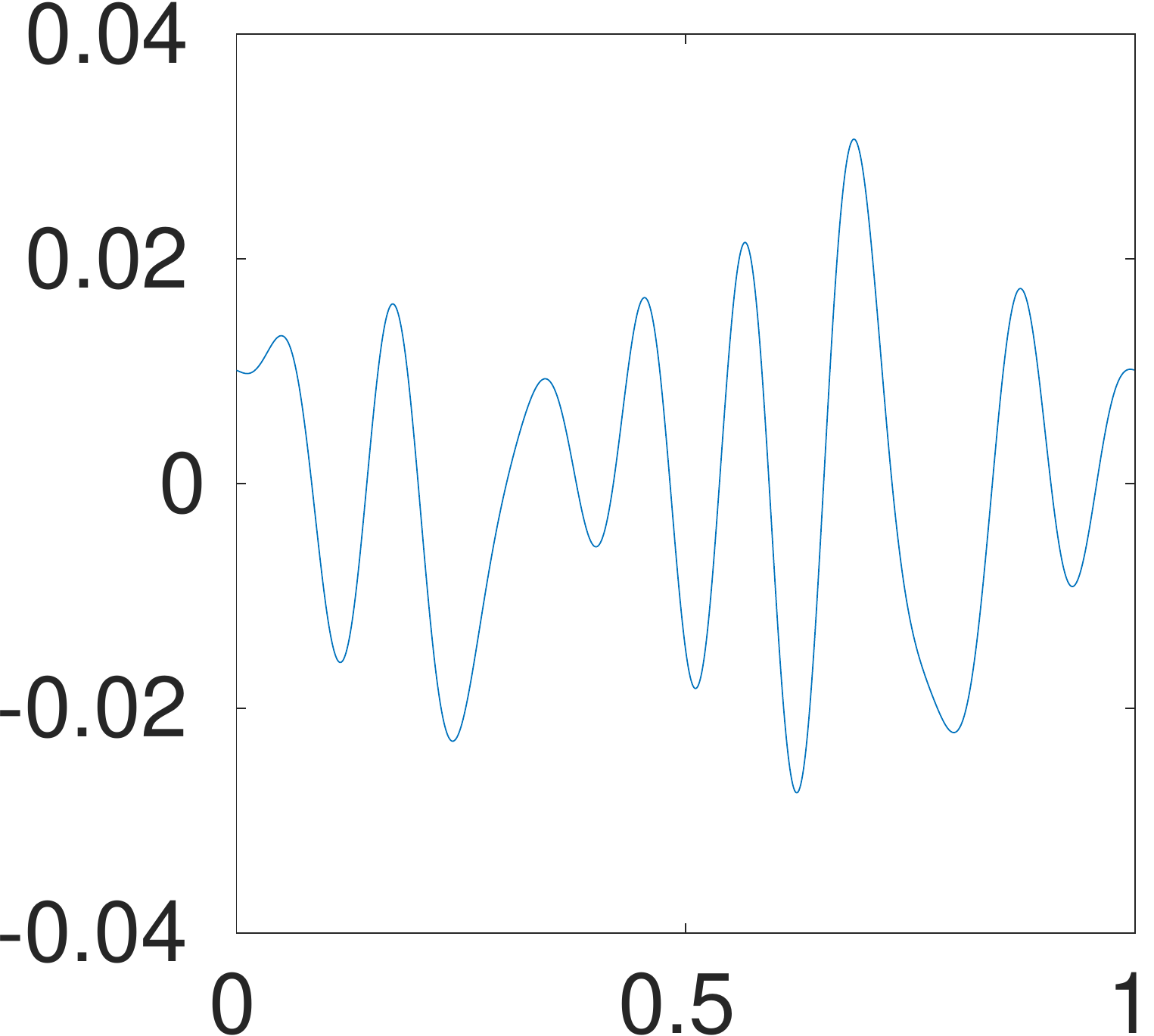}   &
      \includegraphics[width=1.05in]{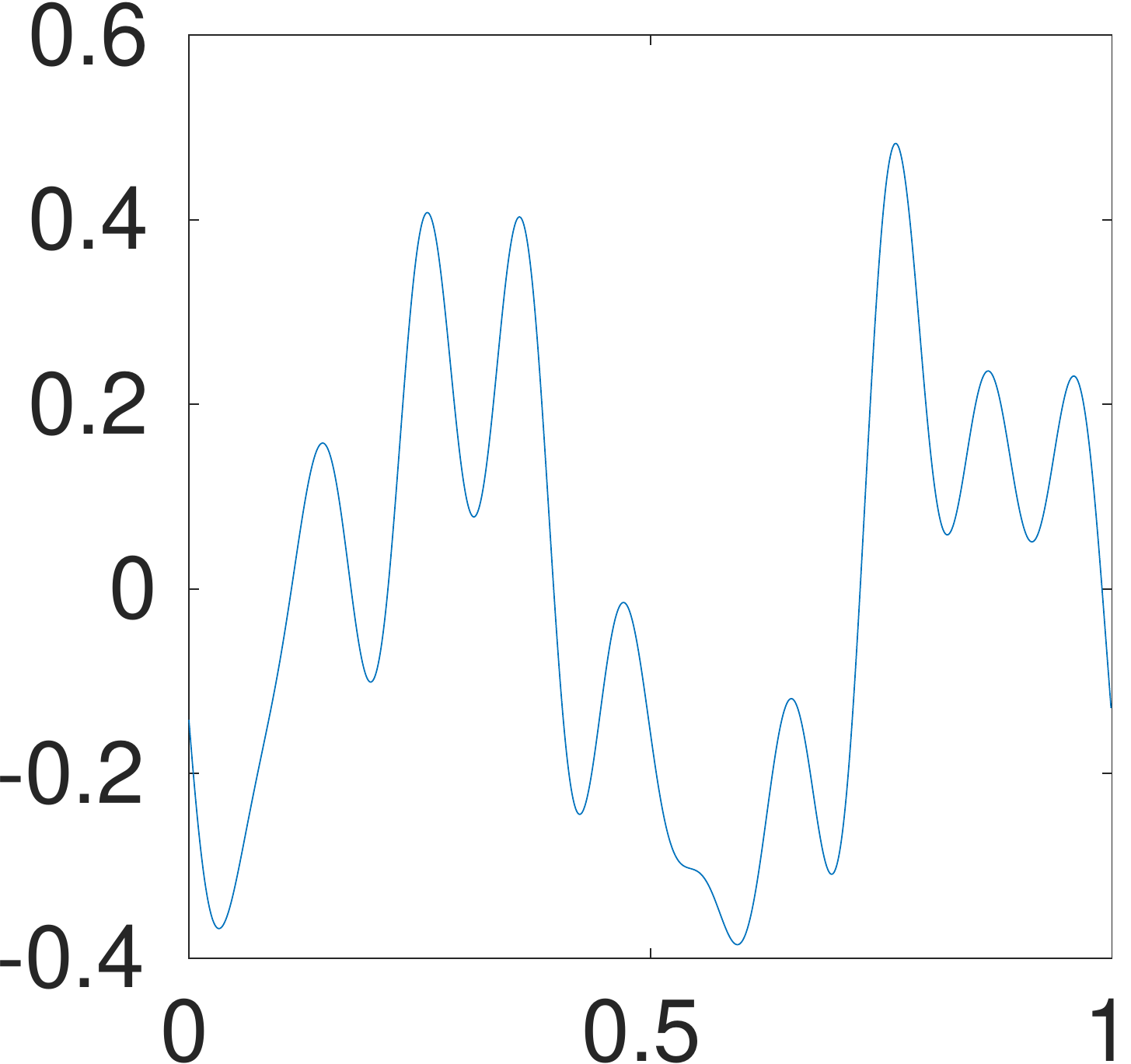}   &
      \includegraphics[width=1in]{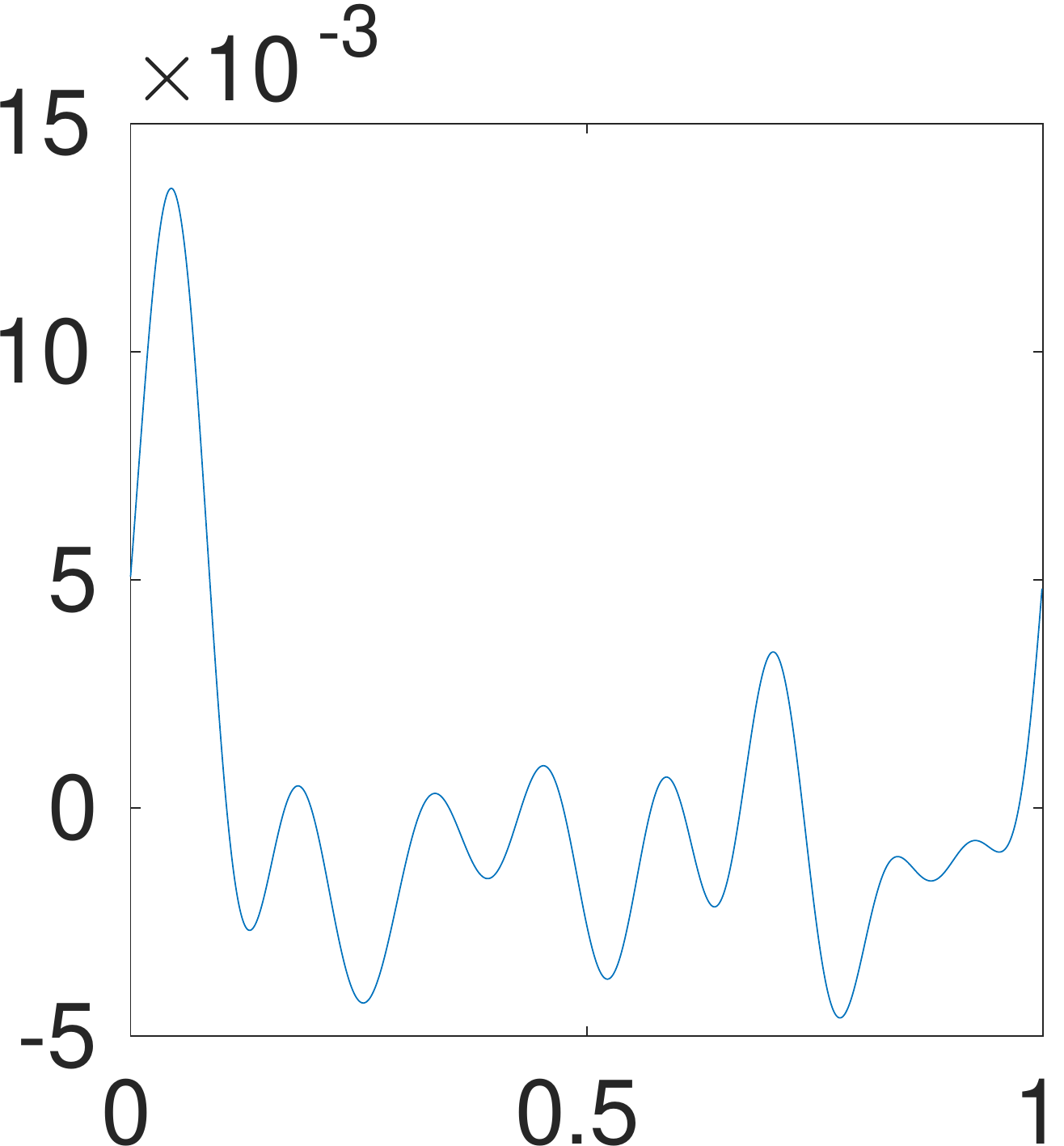}   \\
      \includegraphics[width=0.975in]{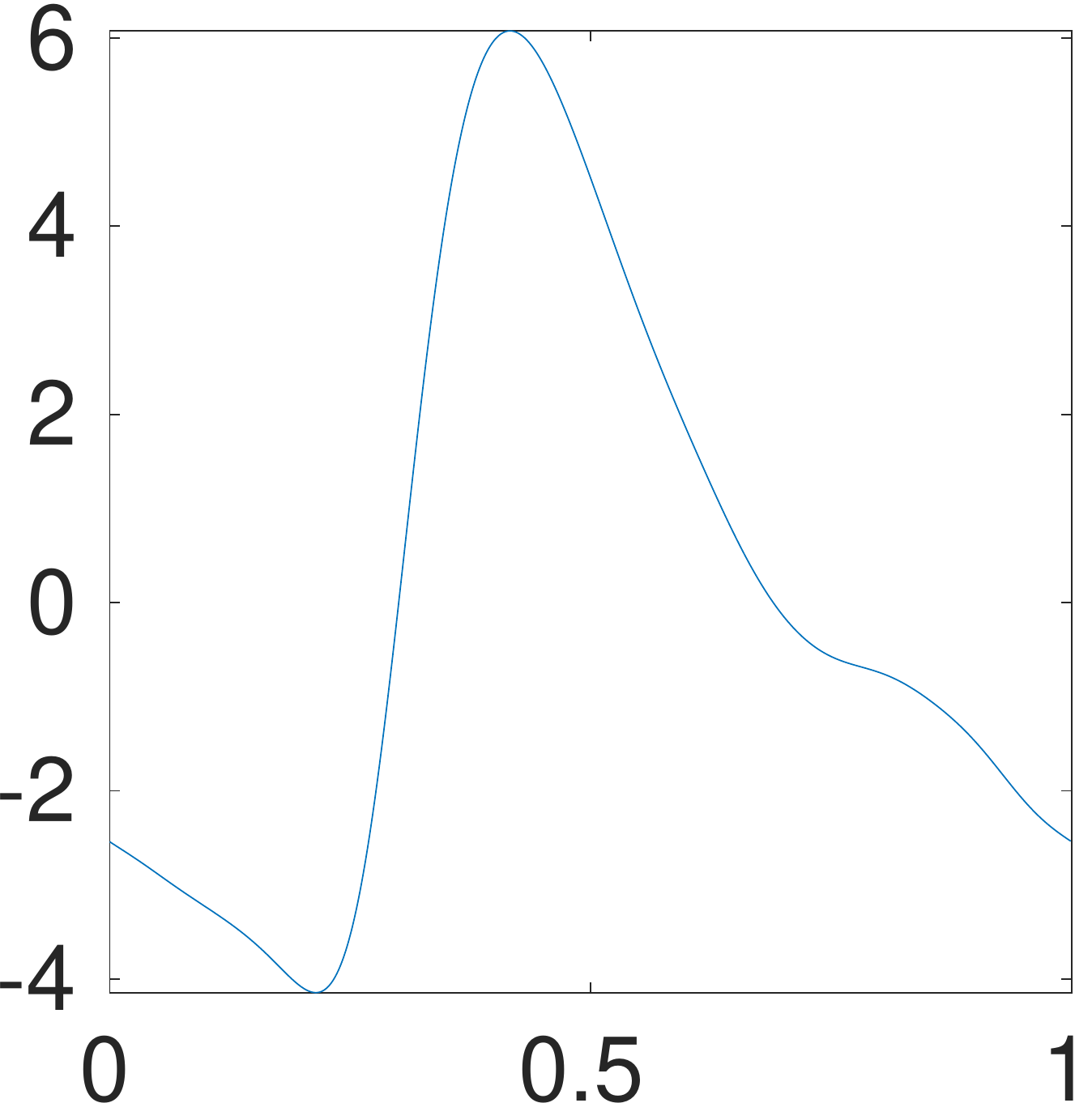}   &
      \includegraphics[width=1.05in]{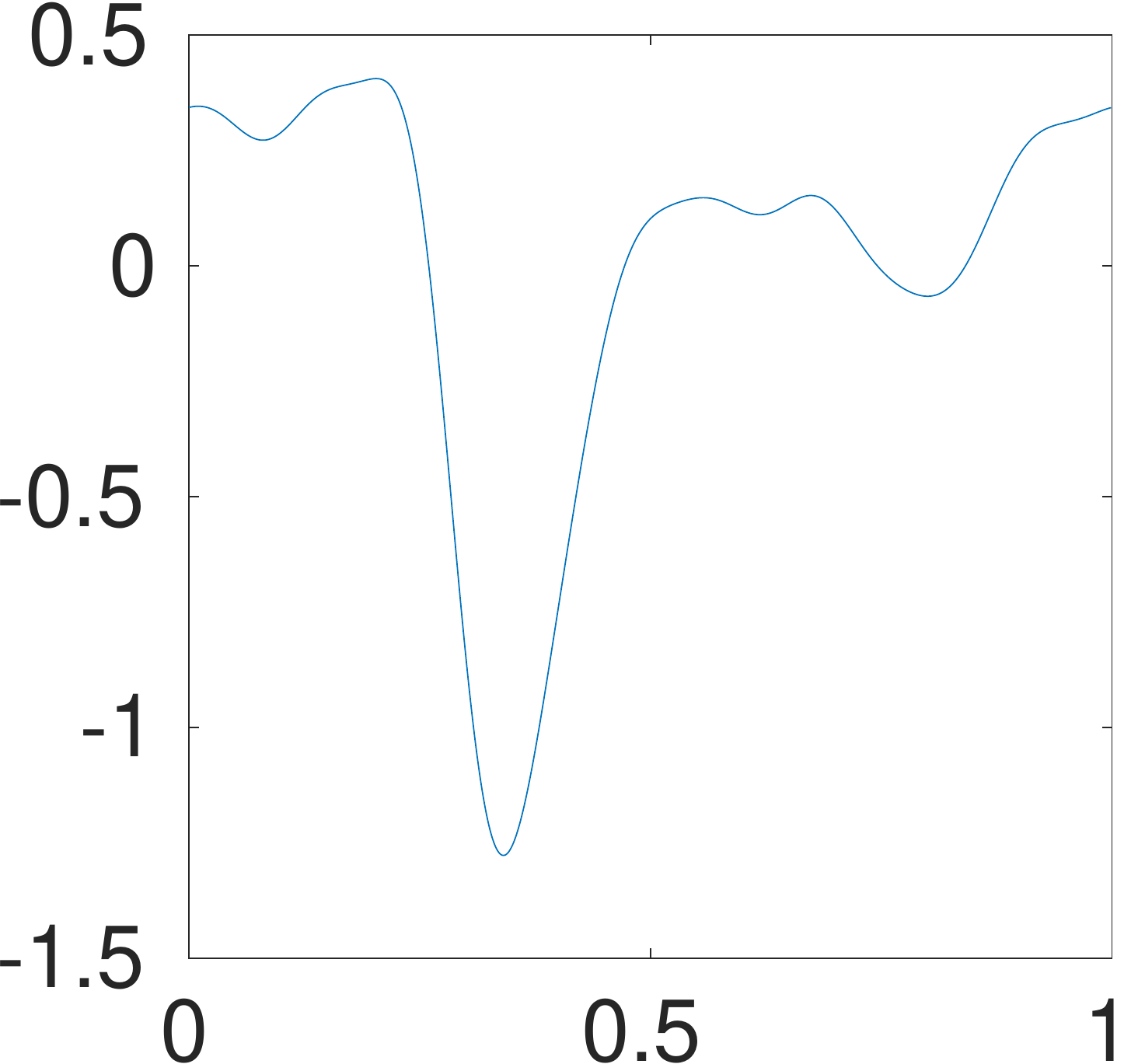}   &
      \includegraphics[width=1.15in]{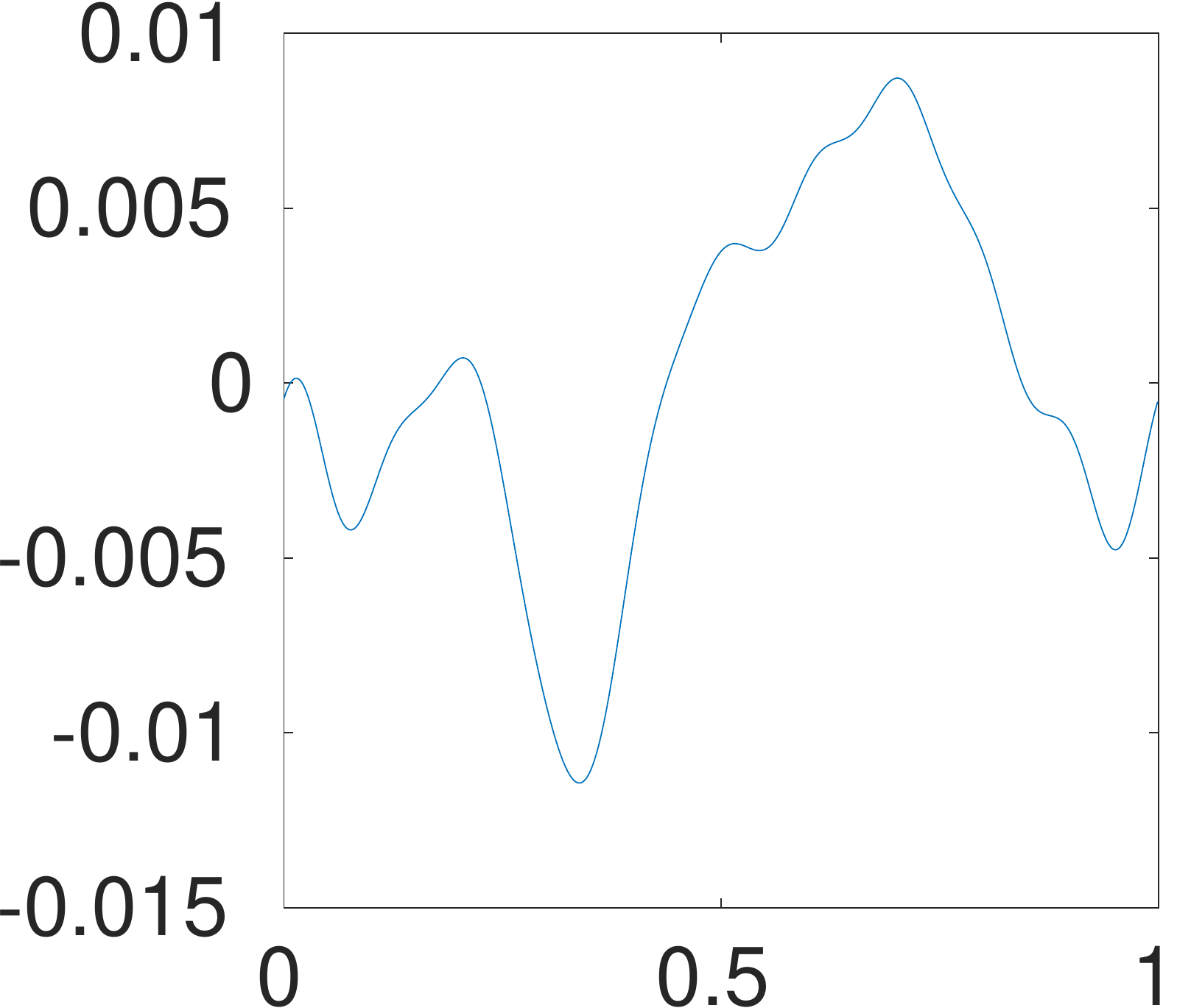}   &
      \includegraphics[width=1.05in]{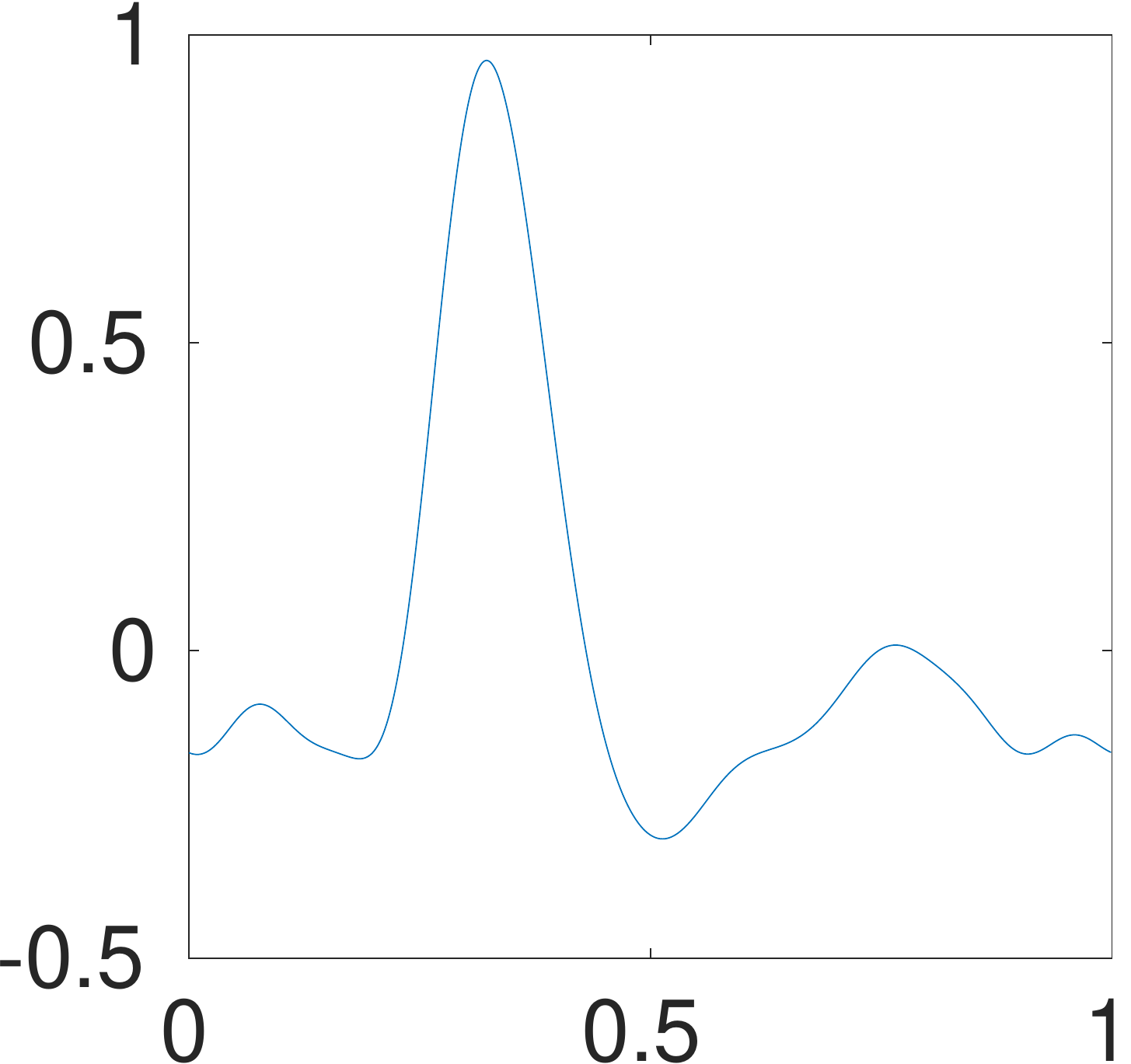}   &
      \includegraphics[width=0.975in]{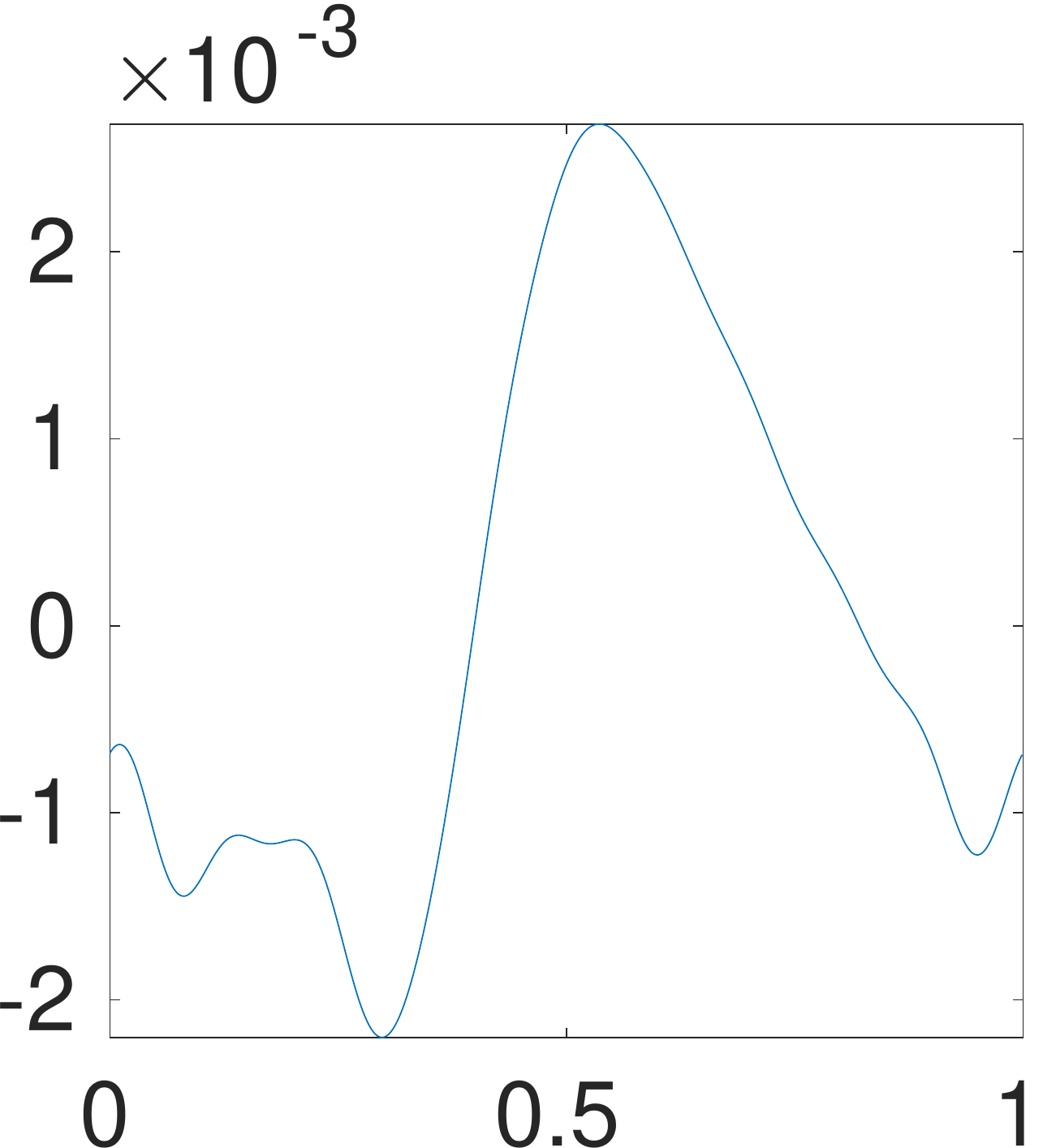}   
    \end{tabular}
  \end{center}
  \caption{Top row: estimated shape functions $a_{0,1}s_{c0,1}(2\pi t)$, $a_{1,1}s_{c1,1}(2\pi t)$, $a_{-1,1}s_{c-1,1}(2\pi t)$, $b_{1,1}s_{s1,1}(2\pi t)$, and $b_{-1,1}s_{s-1,1}(2\pi t)$ for the respiratory MIMF. Bottom row: estimated shape functions $a_{0,2}s_{c0,2}(2\pi t)$, $a_{1,2}s_{c1,2}(2\pi t)$, $a_{-1,2}s_{c-1,2}(2\pi t)$, $b_{1,2}s_{s1,2}(2\pi t)$, and $b_{-1,2}s_{s-1,2}(2\pi t)$ for the cardiac MIMF.}
\label{fig:15_3}
\end{figure}

\section{Conclusion}
\label{sec:con}

This paper proposed the multiresolution intrinsic mode function (MIMF) as a new model to simulate oscillatory time series with time-varying amplitudes and shapes. In the case of a superposition of several MIMFs, a novel multiresolution decomposition algorithm based on the idea of recursive diffeomorphism-based regression is proposed to separate the signal into individual MIMFs. The convergence and the robustness of recursive scheme has been theoretically and numerically proved. The application of MIMFs and MMD is not limited to decomposing signals into several components with well-differentiated instantaneous phase functions; they can also be used for time series denoising due to the robustness of the MMD algorithm; the multiresolution expansion coefficients and shape function series can also provide better features for adaptive time series analysis than traditional Fourier analysis and wavelet analysis. 

The MIMF model is inspired by the visual observation of nonlinear and non-stationary oscillatory time series with time-dependent amplitudes and shape functions. Theoretically it has been proved that the correlation of MIMF's with well-differentiated phase functions is asymptotically zero in the sense of recursive diffeomorphism-based regression. In other words, when the MIMF and MMD model is applied to analyze a time series, the resulting representation by MIMF's is unique. Numerically, it is shown that MIMF's match real oscillatory time series well, especially those from health data. To theoretically validate the MIMF model for oscillatory time series, a more fundamental but challenging approach is to understand the governing dynamics that generate the time series. For example, despite several attempts \cite{DAS2013490} over decades still it has not been successful to establish dynamical models faithfully describing real ECG signals -- not to mention the variability among different individuals. The MIMF model may serve as a prototype that separates the variability of complicated signals into simpler components, the generator of whose dynamics may be more accessible. 

Though adaptive time-series analysis is the main motivation of MIMF and MMD discussed in this paper, these models and algorithms can be naturally extended to higher dimensional spaces. Applications include atomic crystal images in physics \cite{Crystal,LuWirthYang:2016}, art investigation \cite{Canvas,Canvas2}, geology \cite{GeoReview,SSCT,977903}, imaging \cite{4685952}, etc. In higher dimensional spaces, the computational efficiency is a crucial issue. There are mainly two directions for future works for fast algorithms for MMD. A natural idea is to develop fast regression techniques to reduce the time for each iteration in the recursive scheme in MMD; a more challenging question is to develop new recursive schemes to update all the multiresolution expansion coefficients and shape function series simultaneously, instead of updating them one by one in each iteration in Algorithm \ref{alg:RDBR2} and \ref{alg:MMD}.

{\bf Acknowledgments.} 
H.Y. thanks Ingrid Daubechies for her fruitful discussion. 

\bibliographystyle{unsrt} 
\bibliography{ref}

\end{document}